\theoremstyle{plain}
\renewcommand{\theequation}{\arabic{section}.\arabic{equation}}
\renewcommand\thefigure{\thesection.\@arabic\c@figure}
\renewcommand\thetable{\thesection.\@arabic\c@table}
\newtheorem{thm}{\bf Theorem}[section]
\newtheorem{cor}{\bf Corollary}[section]
\newtheorem{lmm}{\bf Lemma}[section]
\newenvironment{lemma}{\begin{lmm}}{\end{lmm}}
\theoremstyle{remark}
\newtheorem{remark}{\bf Remark}[section]
\theoremstyle{definition}
\newtheorem{exm}{\bf Example} 
\def \ri {{\rm i}}
\def \rd {{\rm d}}
\newcommand{\bs}[1]{\boldsymbol{#1}}
\begin{document}
\baselineskip 14pt
\bibliographystyle{plain}

\title[Logarithmic Nonlinearity] {Unconditional and optimal error analysis of two linearized finite difference schemes for the logarithmic Schr\"odinger equation\thanks{Submitted to the editors DATE.
\funding{The research of the second author is supported by Scientific Research Fund
of Zhejiang Provincial Education Department (Y202352579). }}}
\author[
     T. Wang\, \&\,J. Yan
	]{  Tingchun Wang${}^{\dag}$\;  and\; Jingye Yan${}^{\ddag}$
		}
	\thanks{${}^{\dag}$ School of Mathematics and Statistics, Nanjing University of Information Science and Technology, Nanjing 210044, China. Email: wangtingchun2010@gmail.com.\\
		\indent ${}^{\ddag}$Corresponding author. College of Mathematics and Physics, Wenzhou University, Wenzhou, 325035, China.The research of the first author is supported by Zhejiang Provincial Natural Science Foundation of China (Q24A010031).  Email: yanjingye0205@163.com.
		}
\keywords{Logarithmic Schr\"odinger equation, non-differentiability, conservation, optimal and unconditional} \subjclass[2000]{65N35, 65N22, 65F05, 35J05}

\begin{abstract} In this paper, we propose two linearized finite difference schemes for solving the logarithmic Schr\"odinger equation (LogSE) without the need for regularization of the logarithmic term. These two schemes employ the first-order and the second-order backward difference formula, respectively, for temporal discretization of the LogSE, while using the second-order central finite difference method for spatial discretization. We overcome the singularity posed by the logarithmic nonlinearity $f(u)=u\ln|u|$ in  establishing optimal $l^{2}$-error estimates for the first-order scheme, and an almost optimal $l^{2}$-error estimate for the second-order scheme. Compared to the error estimates of the LogSE in the literature, our error bounds not only greatly improve the convergence rate but also get rid of the time step restriction.  Furthermore, without enhancing the regularity of the exact solution or imposing any requirements on the grid ratio, we establish error estimates of the two proposed schemes in the discrete $H^{1}$ norm. However, the existing results available in the literature either fail  to provide $H^{1}$  error estimates or require certain restrictions on the grid ratio. Numerical results are reported to confirm our error estimates and demonstrate rich dynamics of the LogSE.

\end{abstract}
 \maketitle

\vspace*{-10pt}
\section{Introduction}
In this paper, we consider the following initial-boundary value problem of the logarithmic Schr\"odinger equation (LogSE):
\begin{equation}\label{lognls0B}
\begin{cases}
\ri  \partial_t u(\boldsymbol{x},t)+ \Delta u(\boldsymbol{x},t)=\lambda u (\boldsymbol{x},t)\ln(|u(\boldsymbol{x},t)|^2), \quad   \boldsymbol{x}\in  \Omega, \;\; t>0,\\[2pt]
u(\boldsymbol{x},t)=0,\quad  \boldsymbol{x} \in {\partial \Omega},\;\; t\ge 0; 
\quad u(\boldsymbol{x},0)=u_0(\boldsymbol{x}),\quad \boldsymbol{x}\in \bar \Omega,
\end{cases}
\end{equation}
where $\boldsymbol{x}$, $t$ represent the spatial and temporal coordinates, respectively, and $\lambda \in \mathbb{R}\setminus\{0\}$ measures the force of the nonlinear interaction,
 and $\ri=\sqrt{-1},$  $\Omega\subset\mathbb{R}^{d}$ $(d=1,2,3 )$ is a bounded domain with a smooth boundary, and $u_0$ is a given complex-valued function with a suitable regularity.
The LogSE admits applications to quantum mechanics  \cite{Bialynicki1976nonlinear,Bialynicki1979Gaussons}, quantum optics \cite{Buljan2003Incoherent,krolikowski2000unified}, nuclear physics \cite{Hefter1985Application}, transport and diffusion phenomena \cite{Hansson2009propagation}, quantum gravity \cite{Zloshchastiev2010Logarithmic}.

Like the cubic Schr\"odinger equation which is a closely related model to the LogSE, the initial-boundary value problem \eqref{lognls0B} also preserves the total mass
\begin{align}\label{lognlsmass}
M(t):&=\|u(\cdot, t)\|_{L^2(\Omega)}^2=\int_{\Omega}|u(\boldsymbol{x}, t)|^2 d \boldsymbol{x} \equiv \int_{\Omega}|u_0(\boldsymbol{x})|^2 d \boldsymbol{x}=M(0),
\end{align}
and total energy
\begin{align}\label{lognlsenergy}
E(t): & =\int_{\Omega}|\nabla u(\bs x, t)|^2 d \bs x+(F(|u(\cdot, t)|^2),1)
  \equiv \int_{\Omega}|\nabla u_0(\bs x)|^2 d \bs x+ (F(|u_0|^2),1)=E(0),
\end{align}
where
\begin{equation}\label{F}
(F(|u(\cdot, t)|^2),1):=\int_{\Omega}F(|u(\bs x, t)|^2)d \bs x, \;\;
F(\rho)=\lambda\int_0^\rho \ln (s) d s=\lambda(\rho \ln \rho-\rho), \;\; \rho =|u|^2.
\end{equation}
From a mathematical level, the LogSE \eqref{lognls0B} owns some features which are quite different from the cubic Schr\"odinger equation. First, although the energy of LogSE \eqref{lognls0B} is conserved,
 it does not have a definite sign since the logarithmic term $\ln|u|^2$ can change sign within $\Omega$ as time evolves. Moreover, according to \cite{Carles2018Universal}, whether the nonlinearity is repulsive/attractive (or defocusing/focusing) depends on both $\lambda$ and the value of the density $\rho=|u|^2$. When $\lambda \ln\rho >0$, the nonlinearity $\lambda {u}\ln\rho >0$ is dispersive, and it is attractive when $\lambda \ln\rho <0$. Second, the logarithmic nonlinear term $f(u)=u\ln|u|$ is non-differential at $u=0$.
Besides the first-order derivatives of the nonlinear term with respect to time and space blow up, whenever $u=0$ even for a smooth solution $u.$ In fact, $f(u)$ only is $\alpha$-H\"older continuous with $\alpha\in (0,1)$ (see Lemma \ref{N2L2} below) not Lipschitz continuous. Therefore, solving the Cauchy problem and constructing numerical schemes for LogSE \eqref{lognls0B} are not trivial.

Bao et al  \cite{Bao2019Error} first proposed the regularization of the logarithmic nonlinear term to avoid the blowup of $\ln|u|$ as $|u|\to 0,$ leading to the  regularized logarithmic Schr\"{o}dinger equation (RLogSE):
\begin{equation}\label{Rlognls0C}
\begin{cases}
\ri  \partial_t u^{\varepsilon}(\boldsymbol{x},t)+ \Delta u^{\varepsilon}(\boldsymbol{x},t)=\lambda u^{\varepsilon} (\boldsymbol{x},t)\ln(\varepsilon+|u^{\varepsilon}(\boldsymbol{x},t)|)^2, \quad   \boldsymbol{x}\in  \Omega, \;\; t>0,\\[2pt]
u^{\varepsilon}(\boldsymbol{x},t)=0,\quad  \boldsymbol{x} \in {\partial \Omega},\;\; t\ge 0; 
\quad u^{\varepsilon}(\boldsymbol{x},0)=u_0(\boldsymbol{x}),\quad \boldsymbol{x}\in \bar \Omega,
\end{cases}
\end{equation}
where the regularization parameter   $0<\varepsilon\ll 1.$
Then Bao et al \cite{Bao2019Error} applied the semi-implicit finite difference method in time
and central difference method in space to discretize the RLogSE \eqref{Rlognls0C} and establish its error bound $\mathcal{O}(e^{CT|\ln\varepsilon|^2}(\tau^2+h^2))$ in the discrete $L^{\infty}_t(L^2_x)$ norm.
Later, the first-order Lie-Trotter splitting and Fourier spectral method were considered for the RLogSE
 \eqref{Rlognls0C} in \cite{Bao2019Regularized}, where the conservation of mass is preserved and the constraints for discrete parameters in \cite{Bao2019Error} could be relaxed. And the convergence analysis
arrives at an $\mathcal{O}(|\ln\varepsilon|\tau^{1/2})$ or $\mathcal{O}(\varepsilon^{-1}\tau)$ error bound in the discrete $L^{\infty}_t(L^2_x)$ norm. In \cite{Carles2022numerical} Carles and Su gave the results $\mathcal{O}(|\ln\varepsilon|\tau^{1/2})$ concerning the error estimates of the first-order regularized Lie-Trotter splitting method by solving the LogSE with a harmonic potential. Moreover, Bao et al \cite{Bao2022Error} solved LogSE \eqref{lognls0B} from the energy regularization level different from  \eqref{Rlognls0C} in their first work \cite{Bao2019Error}. In this paper, they obtained
theoretically the convergence rate at $\mathcal{O}(|\ln\varepsilon|\tau^{1/2})$ for the Lie-Trotter scheme or  $\mathcal{O}(\varepsilon^{-1}\tau)$,  and convergence rate at $\mathcal{O}(\varepsilon^{-3}\tau^2)$ for the  Strang splitting scheme. Numerical results confirm the error bounds and indicate local energy regularization performs better than directly regularizing the singular nonlinearity globally. In \cite{Qian2023Novle}, Qian et al constructed high-order mass- and energy-conservative Runge-Kutta
integrators for the logarithmic Schr\"{o}dinger equation with the same regularized term with \cite{Bao2019Error}. Based on these ideas,  four regularized finite difference methods for solving the logarithmic Klein-Gordon equation were constructed in \cite{Yan2021Two,Yan2022A,Yan2021R} and it was shown that these error bounds reach at $\mathcal{O}(\varepsilon+e^{CT|\ln\varepsilon|^2}(\tau^2+h^2))$ and $\mathcal{O}(\varepsilon+e^{\frac{T}{2\varepsilon}}({\tau^2}{\varepsilon^{-2}}+h^2))$, respectively. The error analysis in the above references indicates that the existing numerical schemes of LogSE either require certain restrictions on the grid ratio or suffer from loss of convergence order due to logarithmic nonlinearity.

Recently, there has been growing interest in constructing numerical schemes for the non-regularized logarithmic PDE. With the understanding that $f(0)=0$ when $u=0$, then the nonlinearity is continuous though $f'(u)$ blows up at $u=0$. Some works were carried out to successfully discretize  the original LogSE \eqref{lognls0B} without regularizing the nonlinear term. Wang et al \cite{Wang2024Error} proposed a linearized implicit-explicit time-discretisation with finite element approximation in space for the LogSE without regularization and established the nearly optimal error estimate $\mathcal{O}(\tau+h^{r+1}|\ln h|)$ in the discrete $L^{2}$ norm. However,  their convergence result requires a constraint on the grid ratio, yet no $H^{1}$ error estimate was given. Paraschis and Zouraris \cite{Paraschis2023On}  studied  the  nonlinear implicit Crank-Nicolson-type  scheme with a small and nonnegative parameter $\varepsilon$  to solve the LogSE \eqref{lognls0B} in two dimensions, they first gave a nearly optimal estimate of the local truncation error and then established, with or without using $f_\varepsilon(s):=s\ln(\varepsilon+|s|)$ as a regularization of $f(s)$,  a corresponding nearly optimal estimate of the global error with order $\mathcal{O}(\tau^2+\tau^2|\ln (\tau)|+h_1^2+h_2^2)$ for $\varepsilon=0$ and $\mathcal{O}(\tau^2+\tau^2|\ln (\varepsilon)|+h_1^2+h_2^2)$ for $\varepsilon>0$ in the discrete $L^{\infty}_t(L^2_x)$ norm.
 In \cite{zhang2024low}, Zhang and Wang devoted to the analysis of Lie-Trotter time-splitting and Fourier spectral methods of the LogSE without regularisation and in low regularity setting. They showed that, under the regularity  $u_0 \in L^2(\mathbb{T}^d)$ and $u \in C((0, T] ; H^s(\mathbb{T}^d) \cap L^{\infty}(\mathbb{T}^d))$ for some $0<s<1$, the $L^2$ error is of order $\mathcal{O}((\tau^{s / 2}+N^{-s}) \ln N)$. Recently, Bao, Ma and Wang \cite{bao2024optimal} presented an exponential wave integrator Fourier spectral method for the LogSE and established the $L^2$ error bound with order $\mathcal{O}(\tau|\ln\tau|^2+h^{2}|\ln h|)$ under a CFL-type time step size restriction $\tau|\ln\tau|\le h^2/|\ln h|$.

Finite difference methods are designed with simplicity, making them easy to program and highly popular for applications in practical problems. Therefore this paper is devoted to developing and analyzing two efficient finite difference schemes for approximating the solution to the LogSE \eqref{lognls0B}, without applying regularization to the nonlinear term. These two schemes are introduced utilizing the first-order and second-order backward difference formulas, respectively, for the temporal discretization of the LogSE, while employing the second-order central finite difference method for spatial discretization. Moreover, each scheme employs a different linearized discrete method for the nonlinear term of the LogSE. For simplicity, we use  BDF1  and BDF2   to denote the first-order and second-order schemes, respectively. The highlights of this paper are outlined as follows:
\begin{itemize}
  \item These two schemes are linearly implicit, which means that only a system of linear algebraic equations needs to be solved at each time level. This can be quickly achieved using the FFT-type solver. Therefore, they are very efficient in practical computations.
   \vskip 0.1cm
  \item The BDF1 scheme exhibits unconditional convergence and optimal error estimate of $\mathcal{O}(\tau + h^{2} )$ in the discrete $L^{2}$ norm, where $\tau$ and $h$ are the mesh size and time step, respectively;
       \vskip0.1cm
    \item The BDF2 scheme exhibits unconditional convergence and almost optimal error estimate, the convergence order in the discrete $L^{2}$ norm is of $\mathcal{O}(h^{2}+\tau^{2}{\ln|\tau|})$;
       \vskip 0.1cm
    \item The error estimates of the two BDF schemes are also established in the discrete $H^{1}$ norm without enhancing the regularity of the exact solution or imposing any requirements for grid ratio.
\end{itemize}

The rest of this paper is organized as follows. In Section  \ref{sect2}, we propose two efficient finite difference schemes  for solving the LogSE \eqref{lognls0B}, and then discuss the solvability of them. In Section  \ref{sect3}, we establish the unconditional and optimal (or nearly optimal) error estimates for these numerical schemes. The final section is dedicated to numerical results and discussions.

Throughout the paper, we denote $C$ as a generic positive constant
which may have different values at different circumstances but are
independent of the discrete parameters, i.e. the time step $\tau$
and mesh size $h$, and we adopt the notation $w\lesssim{v}$ to
denote $|w|\leq{Cv}$.

\medskip
\section{Finite difference schemes }\label{sect2}
In this section, we introduce two finite difference time-domain methods for the LogSE equation \eqref{lognls0B} in $d$ dimensions on a bounded computational domain $\Omega\in\mathbb{R}^{d}$ ($d=1,2,3$) equipped with homogeneous Dirichlet boundary conditions and state our main error estimate results.
\subsection{Numerical methods}\label{fullDis}
To simplify the notation, we only demonstrate the methods in a two-dimensional context, meaning that $d=2$, and $\Omega$, which represents a rectangle in $\mathbb{R}^{2}$, is given by $(a,b)\times(c,d)$. The initial-boundary value problem \eqref{lognls0B} in two dimensions reads
\begin{equation}\label{lognls0B-2d}
\begin{cases}
\ri  \partial_t u(x,y,t)+ \Delta u(x,y,t)=\lambda u (x,y,t)\ln(|u(x,y,t)|^2),\;\;
 (x,y)\in \Omega, \;\; t>0,\\[2pt]
 u(x,y,t)=0,\;\;(x,y)\in \partial\Omega,\;\; t\ge 0;  \\[2pt]
 u(x,y,0)=u_0(x,y),\;\; (x,y)\in \overline{\Omega}.
\end{cases}
\end{equation}
Generalized to the one-dimensional case with $\Omega=(a,b)$ being a interval in $\mathbb{R}$ and three-dimensional case with $\Omega=(a,b)\times (c,d)\times (e,f)$ being a cube in  $\mathbb{R}^{3}$ are straightforward.
Choose the time-step $\tau=T/N$ with $N$ being a positive integer, and denote time steps
$t_{n}=n\tau,\hspace{0.2cm}n=0,1,2,\cdots,N$, where $0<T<T_{\max}$
with $T_{\max}$ being the maximal existing time of the solution;
  let the mesh size $h_x:=\frac{b-a}{J}$ and $h_y:=\frac{d-c}{K}$ with $J,K$ being two positive integers and denote $h:=h_{\max}=\max\{h_x,h_y\},\,h_{\min}:=\min\{h_x,h_y\}$, and  $x_{j}:=a+jh_x$, $y_{k}:=c+kh_y$ for $j=0,1,\ldots,J,\; k=0,1,\ldots,K$. We here assume that $h\lesssim {h}_{\min}$.
Define the index sets as
\begin{equation*}
\begin{split}
\mathcal{T}_{h}&=\big\{(j,k)|j=1,2,\ldots,J-1;\,k=1,2,\ldots,K-1\big\},\\
\mathcal{T}_{h}^{0}&=\big\{(j,k)|j=0,1,2,\ldots,J;\,k=0,1,2,\ldots,K\big\},\quad \Gamma_h=\mathcal{T}_{h}^{0}\backslash \mathcal{T}_{h}.
\end{split}
\end{equation*}
Assume that $u^{n}_{j,k}$ is the approximation to the exact solution $ u(x_{j},y_{k},t_{n})$ for $(j,k)\in \mathcal{T}_{h}^{0}$ and $n\geq 0$. Denote by $ u^{n} \in \mathbb{C}^{(J+1)\times(K+1)} $ the numerical solution vector at time $t=t_{n}$. The followings are the finite difference operators:
\begin{equation*}
\begin{split}
&\delta^{+}_{x}u^{n}_{j,k}=\frac{1}{h_x}({u^{n}_{j+1,k}-u^{n}_{j,k}}),\quad\delta^{+}_{y}u^{n}_{j,k}=\frac{1}{h_y}({u^{n}_{j,k+1}-u^{n}_{j,k}}), \quad \delta_{\nabla}^{+}{u}_{j,k}^{n}=(\delta_{x}^{+} u_{j,k}^{n}, \delta_{y}^{+} u_{j,k}^{n})^{T},\\
&\delta^{2}_{x}u^{n}_{j,k}=\frac{1}{h_x^{2}}({u^{n}_{j+1,k}-2u^{n}_{j,k}+u^{n}_{j-1,k}}), \quad
\delta^{2}_{y}u^{n}_{j,k}=\frac{1}{h_{y}^{2}}({u^{n}_{j,k+1}-2u^{n}_{j,k}+u^{n}_{j,k-1}}),\\
&
 \delta_{\nabla}^{2} u_{j,k}^{n}=\delta_{x}^{2}{u}_{j,k}^{n}+\delta_y^{2}{u}_{j,k}^{n},\quad
\delta^{+}_{t}u^{n}_{j,k}=\frac{1}{\tau}({u^{n+1}_{j,k}-u^{n}_{j,k}}),\quad
\delta^{-}_{t}u^{n+1}_{j,k}=\delta^{+}_{t}u^{n}_{j,k},\\
&D_{t}^{-}u^{n+1}_{j,k}=\frac{1}{2\tau}({3u^{n+1}_{j,k}-4u^{n}_{j,k}+u^{n-1}_{j,k}}),\quad
  u^{n+\frac{1}{2}}_{j,k}=\frac{1}{2}({u^{n}_{j,k} + u^{n+1}_{j,k}}).
\end{split}
\end{equation*}
We introduce the space of grid functions $X_{h}^{0}$ as
\begin{equation*}
X_{h}^0=\big\{ u=(u_{j,k})_{(j,k) \in \mathcal{T}_{h}^{0}}| u_{j,k}=0\;{\rm{when}}\;(j,k) \in \Gamma_{h} \big\}\subseteq \mathbb{C}^{(J+1)\times(K+1)}.
\end{equation*}
The inner product and  discrete $L^{r} (1\le{r}<+\infty)$, semi-$H^{1}$, semi-$H^{2}$, $L^{\infty}$ norms  over $X_{h}^0$ are defined as
\begin{equation*}
\begin{split}
&(u,v)_{h}=h_x h_y \sum \limits^{J-1}_{j=1} \sum \limits^{K-1}_{k=1}u_{j,k} \overline{v}_{j,k},\quad
\langle \delta_x^{+}u, \delta_x^{+}v\rangle_{h}=h_x h_y \sum \limits^{J-1}_{j=0} \sum \limits^{K-1}_{k=0}\delta_x^{+}u_{j,k}\delta_x^{+} \overline{v}_{j,k},\\
&\langle \delta_y^{+}u, \delta_y^{+}v\rangle_{h}=h_x h_y \sum \limits^{J-1}_{j=0} \sum \limits^{K-1}_{k=0}\delta_y^{+}u_{j,k}\delta_y^{+}\overline{v}_{j,k},\quad \|u\|_{r}=(h_x h_y \sum \limits^{J-1}_{j=1} \sum \limits^{K-1}_{k=1}|u_{j,k}|^{r})^{\frac{1}{r}},\\
&\langle\delta_{\nabla}^{+}u,\delta_{\nabla}^{+}v\rangle_{h}:=\langle \delta_x^{+}u, \delta_x^{+}v\rangle_{h}+\langle \delta_y^{+}u, \delta_y^{+}v\rangle_{h},
\quad |u|_{1}:= \langle\delta_{\nabla}^{+}u,\delta_{\nabla}^{+}u\rangle_{h}^{\frac{1}{2}},\\
&
|u|_{2}=(\delta^2_{\nabla}u,\delta^2_{\nabla}u)_{h}^{\frac{1}{2}},\quad
 \|u\|_{{\infty}}=\sup _{(j,k)\in\mathcal{T}_{h}}|u_{j,k}|,
\end{split}
\end{equation*}
where $u,v\in X_{h}^0$ and $\overline{v}$ represents the conjugate of $v$. It is obvious that $\|u\|_{2}=(u,u)_{h}^{\frac{1}{2}}$, and it is easy to verify the following equations:
\begin{equation*}
\begin{split}
&(\delta_{x}^{2}u, v)_{h}=-\langle\delta_{x}^{+}{u}, \delta_{x}^{+}{v}\rangle_{h},\quad
(\delta_y^2 u, v)_{h}=-\langle\delta_{y}^{+}u, \delta_{y}^{+}v\rangle_{h}.
\end{split}
\end{equation*}
To simplify the notation, we denote $\|u\|_{2}$ by $\|u\|$ throughout the paper.

Based on the above preparations, we introduce the following two finite difference time-domain methods for solving the initial-boundary value problem of LogSE \eqref{lognls0B-2d}:
\begin{itemize}
  \item[I.]  BDF1  scheme
  \begin{equation}\label{lognlsbdf1}
    \ri \delta_{t}^{-}u^{n+1}_{j,k}+\delta^2_{\nabla} u^{n+1}_{j,k} =2\lambda f( u^{n}_{j,k}),\;\;(j,k)\in\mathcal{T}_{h},\; n=0,1,\ldots,N-1;
\end{equation}
  \item[II.]  BDF2 scheme
  \begin{subequations}\label{lognlsbdf2}
     \begin{align}
    &\ri D_{t}^{-}{u}^{n+1}_{j,k}+\delta^{2}_{\nabla} u^{n+1}_{j,k}=2\lambda f(2u^{n}_{j,k}-u^{n-1}_{j,k}),\;\;(j,k)\in\mathcal{T}_{h},\; n=1,2,\ldots,N-1,\label{lognlsLEFD-1}\\
    &\ri \delta^{-}_{t}{u}^{1}_{j,k}+\delta^{2}_{\nabla}{u}^{1}_{j,k} =2\lambda f(u^{0}_{j,k}),\;\;(j,k)\in\mathcal{T}_{h}.\label{lognlsLEFD-2}
  \end{align}
\end{subequations}
\end{itemize} 

\subsection{Solvability}
 We now study the existence and uniqueness of the BDF1 scheme and the BDF2 scheme.
\begin{lemma}[Unique solvability of the BDF1 scheme] \label{ExistUnique} For any fixed $\tau,h_x,h_y>0$ and $ 0\le n\le N-1,$ the discretised problem \eqref{lognlsbdf1} has  a unique solution $ {u}^{n+1}  \in X_h^0.$
\end{lemma}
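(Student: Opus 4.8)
The plan is to exploit the fact that, with $u^n$ regarded as given data, \eqref{lognlsbdf1} is a square linear system for $u^{n+1}$ on the finite-dimensional space $X_h^0$, so that unique solvability reduces to injectivity of the associated linear operator.

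First I would observe that, since $f(s)=s\ln|s|$ extends continuously by setting $f(0)=0$, the quantity $2\lambda f(u^n_{j,k})$ is a well-defined complex number at every $(j,k)\in\mathcal{T}_h$, including nodes where $u^n$ vanishes. Substituting $\delta_t^- u^{n+1}_{j,k}=\tau^{-1}(u^{n+1}_{j,k}-u^n_{j,k})$, the scheme \eqref{lognlsbdf1} is equivalent to
\[
\frac{\ri}{\tau}\,u^{n+1}_{j,k}+\delta^2_\nabla u^{n+1}_{j,k}
 = 2\lambda f(u^n_{j,k})+\frac{\ri}{\tau}\,u^n_{j,k},
 \qquad (j,k)\in\mathcal{T}_h,
\]
i.e.\ $\mathcal{L}u^{n+1}=g$ with $\mathcal{L}:=\tfrac{\ri}{\tau}I+\delta^2_\nabla$ and $g\in X_h^0$ the known right-hand side. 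Because $X_h^0$ is finite-dimensional, it suffices to show that $\mathcal{L}v=0$ with $v\in X_h^0$ forces $v=0$.

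Next I would take the discrete inner product of $\mathcal{L}v=0$ with $v$ and apply the summation-by-parts identities $(\delta_x^2 v,v)_h=-\langle\delta_x^+v,\delta_x^+v\rangle_h$ and $(\delta_y^2 v,v)_h=-\langle\delta_y^+v,\delta_y^+v\rangle_h$ recorded above, which give $(\delta^2_\nabla v,v)_h=-|v|_1^2$. This yields
\[
\frac{\ri}{\tau}\,\|v\|^2-|v|_1^2=0 .
\]
Taking imaginary parts gives $\|v\|^2=0$, hence $v=0$; thus $\mathcal{L}$ is injective, therefore bijective, and \eqref{lognlsbdf1} has a unique solution $u^{n+1}\in X_h^0$.

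I do not anticipate a genuine obstacle here: the argument hinges on the single structural feature that the zeroth-order part $\tfrac{\ri}{\tau}I$ of $\mathcal{L}$ is purely imaginary while the discrete Laplacian is real (and negative semidefinite), so the two decouple upon extracting the imaginary part. The only point deserving explicit mention is the continuous extension $f(0)=0$, which guarantees the data term is meaningful at zeros of $u^n$ and makes the linear system genuinely well-posed.
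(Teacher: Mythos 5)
Your proposal is correct and follows essentially the same route as the paper: reduce to the homogeneous linear system on the finite-dimensional space $X_h^0$, take the discrete inner product with the unknown, and extract the imaginary part to force the $L^2$ norm to vanish. The only addition is your explicit remark about the summation-by-parts identities and the extension $f(0)=0$, both of which are consistent with (and implicit in) the paper's argument.
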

\begin{proof}
From \eqref{lognlsbdf1} we can see that $u^0$ exists, then by using mathematical induction, we assume that $u^n$ is known and try to prove that the solution $u^{n+1}$ exists. In fact, for a given $u^n$, the BDF1 scheme \eqref{lognlsbdf1} just is a nonhomogeneous system of linear algebraic equations with respect to the unknown vector $u^{n+1}\in\mathbb{C}^{(J+1)\times(K+1)}$. Hence, to analyze the unique
solvability of the system of linear algebraic equations \eqref{lognlsbdf1}, we just only prove that the corresponding homogeneous system
\begin{equation}\label{IMEXhomo}
\begin{cases}
\frac{\ri}{\tau}u^{n+1}_{j,k}+\delta^2_{\nabla} u^{n+1}_{j,k} =0,\;\; (j,k) \in \mathcal{T}_{h},\\
u^{n+1}_{j,k}=0, \;\; (j,k) \in \Gamma_{h}
\end{cases}
\end{equation}
has a unique zero solution. To do this, we compute the inner production of \eqref{IMEXhomo} with $u^{n+1}$  and take the imaginary part to obtain $\|u^{n+1}\| =0$, which implies
$u^{n+1}_{j,k} =0$ for  $(j,k) \in \mathcal{T}_{h}^{0}$.
This ends the proof.
\end{proof}
Similarly, we can obtain the existence and uniqueness of the BDF2 scheme.
\begin{lemma}[Unique solvability of the BDF2 scheme] \label{ExistUniqueLEFD} For any fixed $\tau,h_x,h_y>0$ and $ 0\le n\le N-1,$ the discretised problem \eqref{lognlsbdf2} has a unique solution $ {u}^{n+1}  \in X_h^0.$
\end{lemma}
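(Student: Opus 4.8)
The plan is to follow exactly the template of Lemma~\ref{ExistUnique}, arguing by induction on $n$ and using the fact that, at each time level, the BDF2 scheme is a \emph{linear} system in the unknown vector $u^{n+1}$. Since \eqref{lognlsbdf2} consists of a special first step \eqref{lognlsLEFD-2} and the recursion \eqref{lognlsLEFD-1}, the base case is handled separately and the inductive step handles all later levels.

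For the base case, $u^{0}\in X_{h}^{0}$ is the prescribed initial data, and equation \eqref{lognlsLEFD-2} is literally the BDF1 scheme \eqref{lognlsbdf1} with $n=0$; hence Lemma~\ref{ExistUnique} (or a verbatim repetition of its proof) yields a unique $u^{1}\in X_{h}^{0}$. For the inductive step, fix $1\le n\le N-1$ and assume $u^{n-1},u^{n}\in X_{h}^{0}$ are already determined. Then the right-hand side $2\lambda f(2u^{n}_{j,k}-u^{n-1}_{j,k})$ of \eqref{lognlsLEFD-1} is a known vector, and since $D_{t}^{-}u^{n+1}_{j,k}=\frac{1}{2\tau}(3u^{n+1}_{j,k}-4u^{n}_{j,k}+u^{n-1}_{j,k})$, the pair consisting of \eqref{lognlsLEFD-1} for $(j,k)\in\mathcal{T}_{h}$ together with $u^{n+1}_{j,k}=0$ on $\Gamma_{h}$ is a nonhomogeneous square linear system for $u^{n+1}\in\mathbb{C}^{(J+1)\times(K+1)}$. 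Therefore it suffices to show that the associated homogeneous system
\begin{equation*}
\begin{cases}
\dfrac{3\ri}{2\tau}\,u^{n+1}_{j,k}+\delta^{2}_{\nabla} u^{n+1}_{j,k}=0, & (j,k)\in\mathcal{T}_{h},\\
u^{n+1}_{j,k}=0, & (j,k)\in\Gamma_{h},
\end{cases}
\end{equation*}
has only the trivial solution. Taking the discrete inner product of the first equation with $u^{n+1}$ and using the summation-by-parts identity $(\delta^{2}_{\nabla}u^{n+1},u^{n+1})_{h}=-|u^{n+1}|_{1}^{2}\in\mathbb{R}$, the imaginary part of the resulting identity reads $\frac{3}{2\tau}\|u^{n+1}\|^{2}=0$, so $u^{n+1}_{j,k}=0$ for all $(j,k)\in\mathcal{T}_{h}^{0}$, i.e.\ $u^{n+1}\in X_{h}^{0}$ is unique. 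This closes the induction and proves the lemma.

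There is essentially no serious obstacle: the only point worth noting is that the contribution of the BDF2 operator $D_{t}^{-}$ to the unknown level $u^{n+1}$ is the purely imaginary coefficient $3\ri/(2\tau)$, so — just as in the BDF1 case — taking the imaginary part of the energy identity annihilates the (real, nonpositive) contribution of the discrete Laplacian and isolates $\|u^{n+1}\|^{2}$. The only structural subtlety is that the first time step cannot be reached by the recursion \eqref{lognlsLEFD-1} and must instead be covered by the BDF1 argument, but this is immediate from Lemma~\ref{ExistUnique}.
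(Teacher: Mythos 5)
Your proof is correct and follows exactly the route the paper intends: the paper gives no details for this lemma beyond ``similarly,'' and your argument is the natural fleshing-out of the BDF1 proof, reducing to the homogeneous system whose unknown-level coefficient $3\ri/(2\tau)$ is purely imaginary so that taking the imaginary part of the energy identity forces $\|u^{n+1}\|=0$. The separate treatment of the first step via \eqref{lognlsLEFD-2} is also the right (and necessary) bookkeeping.
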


\subsection{Some useful lemmas for error estimates}
\begin{lemma}\cite{Wang2024Error}\label{Holderlmm}
Let $f (z)=z \ln |z|$ for  $z\in \mathbb C.$   If $|u|, |v|\ge  0,$ then we have
\begin{equation}\label{fuvl}
|f(u)-f (v)|\leq  { (|\!\ln y| +1)}\,  |u-v|,\quad y:=\max\{|u|,|v|\}.
\end{equation}
 Given $\alpha\in (0,1),$ denote $\delta_\alpha:=e^{\frac \alpha {\alpha-1}}.$   If  $|u|, |v|\in [0, \epsilon]$ and $0\le \epsilon\le \delta_\alpha,$  then  we have
\begin{equation}\label{holderbnd}
|f(u)-f (v)|
  \le {\mathcal H_\alpha}({\epsilon})  |u-v|^{\alpha},\quad {\mathcal H_\alpha}({\epsilon}):=(2\epsilon)^{1-\alpha}\big(|\!\ln \epsilon|+1\big),
\end{equation}
so  $f (z)$ is {\rm(}locally{\rm)} $\alpha$-H\"older continuous.
\end{lemma}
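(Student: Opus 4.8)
The plan is to establish the two estimates \eqref{fuvl} and \eqref{holderbnd} separately and then read off local Hölder continuity from their combination. For \eqref{fuvl} I would argue by symmetry, assuming without loss of generality $|u|\ge|v|$, so that $y=|u|$. If $u=v=0$ the bound is trivial; if $v=0$ but $u\ne0$ then $f(v)=0$ and $|f(u)-f(v)|=|u|\,\bigl|\ln|u|\bigr|=|u-v|\,|\ln y|\le(|\ln y|+1)|u-v|$. Otherwise $u,v\ne0$, and I would use the algebraic identity
\[
f(u)-f(v)=(u-v)\ln|u|+v\,\ln\frac{|u|}{|v|},
\]
obtained by adding and subtracting $v\ln|u|$. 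Taking absolute values gives $|f(u)-f(v)|\le|u-v|\,|\ln y|+|v|\,\ln\frac{|u|}{|v|}$, where the last logarithm is nonnegative because $|u|\ge|v|$. Applying the elementary inequality $\ln t\le t-1$ with $t=|u|/|v|$ yields $|v|\,\ln\frac{|u|}{|v|}\le|u|-|v|\le|u-v|$, and combining the two terms gives \eqref{fuvl}.

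For \eqref{holderbnd}, set $y=\max\{|u|,|v|\}\le\epsilon$; the case $y=0$ being trivial, assume $y>0$, and note $\delta_\alpha<1$ so $y\le\epsilon<1$ and $|\ln y|=-\ln y$. Starting from \eqref{fuvl} and using $|u-v|\le|u|+|v|\le 2y$ together with $1-\alpha>0$, I would write $|u-v|=|u-v|^{1-\alpha}|u-v|^{\alpha}\le(2y)^{1-\alpha}|u-v|^{\alpha}$, so that
\[
|f(u)-f(v)|\le g(y)\,|u-v|^{\alpha},\qquad g(y):=2^{1-\alpha}(1-\ln y)\,y^{1-\alpha}.
\]
It then suffices to prove that $g$ is nondecreasing on $(0,\delta_\alpha]$, since then $g(y)\le g(\epsilon)=\mathcal H_\alpha(\epsilon)$ because $y\le\epsilon\le\delta_\alpha$. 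Differentiation gives $g'(y)=2^{1-\alpha}y^{-\alpha}\bigl[(1-\alpha)(1-\ln y)-1\bigr]$, which is nonnegative exactly when $\ln y\le\alpha/(\alpha-1)$, i.e. when $y\le\delta_\alpha$; this is precisely where the constant $\delta_\alpha=e^{\alpha/(\alpha-1)}$ enters. This proves \eqref{holderbnd}, and local $\alpha$-Hölder continuity of $f$ follows by combining the two estimates: on any region $\{|z|\ge c\}$ with $c>0$, \eqref{fuvl} shows $f$ is Lipschitz, while \eqref{holderbnd} shows it is $\alpha$-Hölder on a neighbourhood of the origin.

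The only genuine obstacle is the monotonicity analysis of $g$: one must compute $g'$ carefully and verify that its sign changes exactly at $y=\delta_\alpha$, which is what dictates the hypothesis $\epsilon\le\delta_\alpha$ and the precise form of $\mathcal H_\alpha(\epsilon)$. The remaining ingredients — the decomposition into a term Lipschitz in $\ln|u|$ plus a $v\ln(|u|/|v|)$ term, and the bound $\ln t\le t-1$ — are entirely standard.
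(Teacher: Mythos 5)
Your argument is correct and complete: the splitting $f(u)-f(v)=(u-v)\ln|u|+v\ln(|u|/|v|)$ with $\ln t\le t-1$ gives \eqref{fuvl}, and the monotonicity of $g(y)=2^{1-\alpha}(1-\ln y)\,y^{1-\alpha}$ on $(0,\delta_\alpha]$ (whose derivative changes sign exactly at $y=\delta_\alpha$) correctly yields \eqref{holderbnd} with the stated constant $\mathcal H_\alpha(\epsilon)$. Note that the present paper does not prove this lemma but imports it by citation from \cite{Wang2024Error}; your derivation is the standard route taken there, so there is nothing further to compare.
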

\begin{lemma}\label{imbedding}
For any grid function $u \in X_h^0$, there holds the following inequality:
\begin{equation}
\|u\|_{2\alpha}\le |\Omega|^{\frac{1}{2\alpha}-\frac{1}{2}}\|u\|,
\end{equation} where $\alpha\in(0,1)$.
\end{lemma}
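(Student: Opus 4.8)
\textit{Proof proposal.} The plan is to recognize Lemma \ref{imbedding} as the discrete analogue of the continuous embedding $L^2(\Omega)\hookrightarrow L^{2\alpha}(\Omega)$, valid on a bounded domain precisely because $2\alpha<2$, and to establish it by a single application of the discrete H\"older inequality with a well-chosen conjugate pair of exponents. First I would write out the left-hand side using the definition of the discrete norm,
\[
\|u\|_{2\alpha}^{2\alpha}=h_x h_y\sum_{j=1}^{J-1}\sum_{k=1}^{K-1}|u_{j,k}|^{2\alpha}
 =h_x h_y\sum_{j=1}^{J-1}\sum_{k=1}^{K-1}|u_{j,k}|^{2\alpha}\cdot 1,
\]
and then split the product $|u_{j,k}|^{2\alpha}\cdot 1$ by H\"older with exponents $p=1/\alpha$ and its conjugate $q=1/(1-\alpha)$; both exceed $1$ since $\alpha\in(0,1)$, and $1/p+1/q=\alpha+(1-\alpha)=1$. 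Keeping the uniform quadrature weight $h_x h_y$ inside both factors, this yields
\[
\|u\|_{2\alpha}^{2\alpha}\le\Big(h_x h_y\sum_{j=1}^{J-1}\sum_{k=1}^{K-1}|u_{j,k}|^{2}\Big)^{\alpha}
 \Big(h_x h_y\sum_{j=1}^{J-1}\sum_{k=1}^{K-1}1\Big)^{1-\alpha}
 =\|u\|^{2\alpha}\,\big((J-1)(K-1)h_x h_y\big)^{1-\alpha}.
\]

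Next I would bound the node-count factor by the volume of the domain: $(J-1)(K-1)h_x h_y\le JK h_x h_y=(b-a)(d-c)=|\Omega|$. Substituting this and taking the $(2\alpha)$-th root gives $\|u\|_{2\alpha}\le|\Omega|^{(1-\alpha)/(2\alpha)}\|u\|$, and since $(1-\alpha)/(2\alpha)=\tfrac{1}{2\alpha}-\tfrac12$, this is exactly the claimed inequality. The same computation goes through verbatim in the one- and three-dimensional settings, with $|\Omega|$ the length or the volume of the cube, so no separate treatment is needed.

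I do not expect a genuine obstacle here; the only point that requires a little care is the bookkeeping of the quadrature weights, namely making sure the factor $h_x h_y$ is distributed so that the first factor on the right is precisely $\|u\|^{2\alpha}$ (a discrete $L^2$ norm) rather than an unweighted sum, and that the second factor collapses to a power of the discrete volume. Getting the conjugate exponents right ($p=1/\alpha$, $q=1/(1-\alpha)$) is the whole content of the argument.
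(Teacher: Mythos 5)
Your proposal is correct and follows essentially the same route as the paper: a single application of the discrete H\"older inequality with conjugate exponents $1/\alpha$ and $1/(1-\alpha)$, followed by bounding the node-count factor $(J-1)(K-1)h_xh_y$ by $|\Omega|$. No further comment is needed.
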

\begin{proof}
With H\"older's inequality, we have
\begin{equation*}
\begin{split}
\|u\|_{2\alpha}^{2\alpha}&=h_xh_y\sum \limits^{J-1}_{j=1}\sum \limits^{K-1}_{k=1}|u_{j,k}|^{2\alpha}=h_xh_y\sum \limits^{J-1}_{j=1}\sum \limits^{K-1}_{k=1}1\cdot|u_{j,k}|^{2\alpha}\\
&\le (h_xh_y\sum \limits^{J-1}_{j=1}\sum \limits^{K-1}_{k=1}1^{\beta})^{\frac{1}{\beta}}\big(h_xh_y\sum \limits^{J-1}_{j=1}\sum \limits^{K-1}_{k=1}(|u_{j,k}|^{2\alpha})^{\frac{1}{\alpha}}\big)^{\alpha}
 \le |\Omega|^{\frac{1}{\beta}}\|u\|^{2\alpha},
\end{split}
\end{equation*}
where $\frac{1}{\beta}=1-\alpha$. Then we obtain
$ \|u\|_{2\alpha}\le |\Omega|^{\frac{1}{2\alpha}-\frac{1}{2}}\|u\|$,
which completes the proof.
\end{proof}
\begin{lemma}\label{N2L2}
Let $f (u)=u \ln |u|$ be a composite function. Assume that  $u,v\in  {X}_{h}^0 $ and  denote
$\Lambda_\infty= \max\{ \|u \|_{\infty}, \|v \|_{\infty} \}.$
 For any $\alpha \in (0,1)$ and $\epsilon \in (0, \delta_\alpha]$ with   $\delta_\alpha=e^{\frac \alpha {\alpha-1}},$ we have the following bounds.
\begin{itemize}
\item[(i)]If $\Lambda_\infty >\epsilon,$   then
\begin{equation}\label{ful2est}
\begin{split}
\|f(u)-f(v)\|^2
& \le  
 {\mathcal H}^2_\alpha(\epsilon) |\Omega|^{1-\alpha}\|u\|^{2\alpha} +\Upsilon^2(\epsilon, \Lambda_\infty) \, \|u-v\|^{2},
\end{split}
\end{equation}
where
\begin{equation}\label{holderbnd}
{\mathcal H_\alpha}({\epsilon}):=(2\epsilon)^{1-\alpha}\big(|\!\ln \epsilon|+1\big),
\end{equation}
 and
\begin{equation}\label{Hupsilon}
\Upsilon(\epsilon, \Lambda_\infty)= \max_{\epsilon\le y\le  \Lambda_\infty } \!\!\big\{|\!\ln y|+1\big\}.
\end{equation}
\item[(ii)] If $\Lambda_\infty \le \epsilon$, then
\begin{equation}\label{fuvHolder}
\| f(u) - f(v) \| \le \mathcal H _\alpha( \epsilon ) \| u - v \|_{2\alpha}^{\alpha}.
\end{equation}
\end{itemize}
\end{lemma}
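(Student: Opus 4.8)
The plan is to push everything down to the pointwise two-regime estimates of Lemma~\ref{Holderlmm} and then sum over the grid, treating part (ii) first because it is the elementary building block and part (i) merely combines it with the Lipschitz-type bound.

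For part (ii): the hypothesis $\Lambda_\infty\le\epsilon\le\delta_\alpha$ forces $|u_{j,k}|,|v_{j,k}|\in[0,\epsilon]$ at \emph{every} node of $\mathcal{T}_h$, so the local H\"older estimate of Lemma~\ref{Holderlmm} applies nodewise and gives $|f(u_{j,k})-f(v_{j,k})|\le\mathcal H_\alpha(\epsilon)\,|u_{j,k}-v_{j,k}|^{\alpha}$. Squaring, multiplying by $h_xh_y$ and summing over $\mathcal{T}_h$ yields $\|f(u)-f(v)\|^{2}\le\mathcal H_\alpha^{2}(\epsilon)\,h_xh_y\sum_{(j,k)\in\mathcal{T}_h}|u_{j,k}-v_{j,k}|^{2\alpha}=\mathcal H_\alpha^{2}(\epsilon)\,\|u-v\|_{2\alpha}^{2\alpha}$, and a square root gives \eqref{fuvHolder}.

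For part (i) I would split the nodes by local size. Put $y_{j,k}:=\max\{|u_{j,k}|,|v_{j,k}|\}$ and write $\mathcal{T}_h=A\cup B$ with $A=\{(j,k):y_{j,k}\le\epsilon\}$ and $B=\{(j,k):y_{j,k}>\epsilon\}$. On $B$ the data are bounded away from $0$, so the Lipschitz-type bound \eqref{fuvl} of Lemma~\ref{Holderlmm} applies; since $\epsilon<y_{j,k}\le\Lambda_\infty$ there, the factor $|\ln y_{j,k}|+1$ is dominated by $\Upsilon(\epsilon,\Lambda_\infty)$ from \eqref{Hupsilon} (this is where the hypothesis $\Lambda_\infty>\epsilon$ makes that maximum meaningful), so summing the squares over $B\subseteq\mathcal{T}_h$ contributes at most $\Upsilon^{2}(\epsilon,\Lambda_\infty)\,\|u-v\|^{2}$, the second term of \eqref{ful2est}. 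On $A$ both moduli lie in $[0,\epsilon]\subseteq[0,\delta_\alpha]$, so the local H\"older estimate of Lemma~\ref{Holderlmm} applies again and, after squaring and summing, leaves the quantity $\mathcal H_\alpha^{2}(\epsilon)\,h_xh_y\sum_{(j,k)\in A}|u_{j,k}-v_{j,k}|^{2\alpha}$; invoking the discrete H\"older inequality exactly as in the proof of Lemma~\ref{imbedding}, with the conjugate exponents $1/(1-\alpha)$ and $1/\alpha$, converts this sub-quadratic sum into the factor $|\Omega|^{1-\alpha}$ times an $\ell^{2}$-quantity raised to the power $\alpha$, which reproduces the first term $\mathcal H_\alpha^{2}(\epsilon)\,|\Omega|^{1-\alpha}\|u\|^{2\alpha}$ of \eqref{ful2est}. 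Adding the $A$- and $B$-contributions gives \eqref{ful2est}.

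The only genuine difficulty is the small-value set $A$: there $f$ is merely $\alpha$-H\"older, not Lipschitz, so one cannot close directly in the $\ell^{2}$ norm, and the remedy is precisely the discrete embedding of Lemma~\ref{imbedding}, which trades the sub-quadratic power for the volume factor $|\Omega|^{1-\alpha}$ while leaving the singular prefactor $\mathcal H_\alpha(\epsilon)=(2\epsilon)^{1-\alpha}(|\ln\epsilon|+1)$ under control, indeed tending to $0$ as $\epsilon\to0$ for each fixed $\alpha\in(0,1)$. Everything else is routine: the nodewise dichotomy is exhaustive, $\sum_A$ and $\sum_B$ are each dominated by $\sum_{\mathcal{T}_h}$ when passing to the full norms, and the constraint $\epsilon\le\delta_\alpha$ is exactly what legitimizes the local H\"older bound of Lemma~\ref{Holderlmm} on $A$.
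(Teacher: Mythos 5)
Your proof is correct and essentially identical to the paper's: the paper splits the nodes into four sets according to whether each of $|u_{j,k}|,|v_{j,k}|$ exceeds $\epsilon$, but its $\mathcal{T}_2\cup\mathcal{T}_3\cup\mathcal{T}_4$ is exactly your set $B$ and is handled by the same Lipschitz-type bound dominated by $\Upsilon(\epsilon,\Lambda_\infty)$, while $\mathcal{T}_1$ is your set $A$ and is handled by the same H\"older-plus-discrete-embedding argument (and part (ii) is the degenerate case $B=\emptyset$). One small remark: what both your argument and the paper's actually produce for the first term is $\mathcal{H}_\alpha^2(\epsilon)\,|\Omega|^{1-\alpha}\|u-v\|^{2\alpha}$, so the $\|u\|^{2\alpha}$ appearing in \eqref{ful2est} is a slip in the statement rather than something your proof fails to reach; it is indeed used as $\|e^n\|^{2\alpha}=\|u-v\|^{2\alpha}$ in \eqref{Nest}.
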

\begin{proof}  We first consider $\Lambda_\infty>\epsilon,$ and  decompose  the index sets into $\mathcal{T}_{h}^{0}=\cup_{i=1}^4\mathcal{T}_{i}$, where
\begin{equation}\label{decomOmega}
\begin{aligned}
&\mathcal{T}_{1}=\{(j,k)\in \mathcal{T}_{h}^{0}| |v_{j,k}|\le \epsilon, |u_{j,k}| \le \epsilon \},  &&\mathcal{T}_{2}=\{(j,k)\in \mathcal{T}_{h}^{0}|  |v_{j,k}| \le \epsilon, |u_{j,k}| > \epsilon\}, \\
& \mathcal{T}_{3}=\{(j,k)\in \mathcal{T}_{h}^{0}|  |u_{j,k}| \le \epsilon, |v_{j,k}| > \epsilon \}, \;\; &&\mathcal{T}_{4}=\{(j,k)\in \mathcal{T}_{h}^{0}|   |u_{j,k}|> \epsilon, |v_{j,k}| > \epsilon\}.
\end{aligned}
\end{equation}
From  \eqref{holderbnd}, and when $(j,k)\in \mathcal{T}_{1}$, we have
\begin{equation*}\label{case1}
\begin{split}
h_xh_y\sum_{(j,k)\in \mathcal{T}_{1}}| f(u_{j,k}) - f(v_{j,k}) |^2 & \le h_xh_y \mathcal{H}_\alpha^2(\epsilon)  \sum_{(j,k)\in \mathcal{T}_{1}}  |u_{j,k}-v_{j,k}|^{2\alpha}=
 \mathcal{H}_\alpha^2(\epsilon) \| u - v \|_{ {2\alpha}(\mathcal{T}_{1}) }^{2\alpha}.
\end{split}
\end{equation*}
Moreover, when $(j,k)\in \mathcal{T}_{2}$, we obtain
\begin{equation*}
\begin{aligned}
h_xh_y\sum_{(j,k)\in \mathcal{T}_{2}}|f(u_{j,k})-f(v_{j,k})|^2 & \le  h_xh_y\sum_{(j,k)\in \mathcal{T}_{2}} ( | \!\ln |u_{j,k}|| + 1)^2  |u_{j,k}-v_{j,k}|^2 \\
& \le \sup_{(j,k)\in \mathcal{T}_{2}} \{ |\! \ln |u_{j,k}| | + 1\}^2 h_xh_y\sum_{(j,k)\in \mathcal{T}_{2}} |u_{j,k} - v_{j,k}|^2\\
&\le  \max_{\epsilon \le \vartheta_{j,k} \le \|u\|_{\infty( \mathcal{T}_{2})}}\{|\! \ln \vartheta_{j,k} | + 1\}^2  h_xh_y\sum_{(j,k)\in \mathcal{T}_{2}}  |u_{j,k} - v_{j,k}|^2\\
&\le\big(\max_{\epsilon \le \vartheta_{j,k} \le \Lambda_\infty} \{| \! \ln \vartheta_{j,k} | +1 \}\big)^2  \|u-v\|^2_{( \mathcal{T}_{2})},
\end{aligned}
\end{equation*}
and similarly when $(j,k)\in \mathcal{T}_{3}$, we have
\begin{equation*}
\begin{aligned}
h_xh_y\sum_{(j,k)\in \mathcal{T}_{3}}|f(u_{j,k})-f(v_{j,k})|^2
&\le\big(\max_{\epsilon \le \vartheta_{j,k} \le \Lambda_\infty} \{| \! \ln \vartheta_{j,k} | +1 \}\big)^2  \|u-v\|^2_{(\mathcal{T}_{3})}.
\end{aligned}
\end{equation*}
At last, when $(j,k)\in \mathcal{T}_{4}$, we define $w_{j,k}:= \max\{ |u_{j,k}|, |v_{j,k}|\}$, and note that
\begin{equation*}\label{Omega2}
\epsilon \le w_{j,k} \le  \max\{ \|u\|_{\infty (\mathcal{T}_4)}, \|u\|_{\infty (\mathcal{T}_4)}\}\le \Lambda_\infty,
\end{equation*}
 we derive from \eqref{Hupsilon} that
\begin{equation*}
\begin{split}
h_xh_y\sum_{(j,k)\in \mathcal{T}_{4}}|f(u_{j,k})-f(v_{j,k})|^2 & \le h_xh_y\sum_{(j,k)\in \mathcal{T}_{4}} ( | \!\ln w_{j,k} | + 1)^2  |u_{j,k}-v_{j,k}|^2 \\
&\le   \sup_{(j,k)\in \mathcal{T}_{4}} \{ |\! \ln w_{j,k} | + 1\}^2 h_xh_y\sum_{(j,k)\in \mathcal{T}_{4}}  |u_{j,k}-v_{j,k}|^2 \\
 &\le \big(\max_{\epsilon \le y \le \Lambda_\infty} \{| \! \ln y | +1 \}\big)^2\,  \|u-v\|^2_{(\mathcal{T}_{4})}.
 \end{split}
\end{equation*}
Summing up the above ``local" bounds and noticing Lemma \ref{imbedding} yield
\begin{equation*}
\begin{split}
\|f(u)-f(v)\|^2 &=h_xh_y \sum_{i=1}^4 \sum_{(j,k)\in \mathcal{T}_{i}} |f(u_{j,k}) - f(v_{j,k})|^2\\
&\le     {\mathcal H}^2_\alpha(\epsilon)  \|u-v\|^{2\alpha}_{{2\alpha}} +
\Upsilon^2({\epsilon},\Lambda_\infty)  \, \|u-v\|^{2}\\
&\le {\mathcal H}^2_\alpha(\epsilon)|\Omega|^{1-\alpha}\|u\|^{2\alpha} +
\Upsilon^2({\epsilon},\Lambda_\infty)  \, \|u-v\|^{2},
\end{split}
\end{equation*}
which leads to \eqref{ful2est}. It is clear that if  $\Lambda_\infty \le \epsilon$, then $\mathcal{T}_{h}^{0}=\mathcal{T}_1,$  i.e.,
 $\mathcal{T}_i=\emptyset $ for $i=2,3,4$ in \eqref{decomOmega}. Therefore \eqref{fuvHolder} is a direct consequence  of
\eqref{ful2est}.
\end{proof}
\begin{remark}
It is noteworthy that Wang, Yan and Zhang presented in their work \cite{Wang2024Error} a bound for $\|f(u)-f(v)\|_{L^{2}(\Omega)}$ in the context of the finite element method. However, the bound presented in Lemma \ref{N2L2} differs from this, as it is specifically tailored for the finite difference time-domain  method.
\end{remark}
 In the error analysis of the BDF1 scheme and the BDF2 scheme, we shall use the following Gr\"{o}nwall's inequality in accordance with the $\alpha$-H\"older regularity.
\begin{lemma} \cite{Wang2024Error} \label{disgronwalls}
Let $c_1, c_2, c_3$ be  positive constants and let  $\alpha\in (0,1]$. Suppose that  a sequence $\{y(n)\}$ satisfies
\begin{equation}\label{disgronwalleqs1}
y(n) \leq  {c}_{1} + {c}_{2}\tau\sum^{n-1}_{m=0}\limits {y}^{\alpha}(m) + {c}_{3}\tau\sum^{n-1}_{m=0}\limits y(m),\quad {n}\geq {1},
\end{equation}
then we have
\begin{equation}\label{disgronwalleqs20}
y(n)  \leq c_1 \bigg(1+(c_1^{\alpha-1}c_2+c_3)\frac{(1+\alpha c_1^{\alpha-1}c_2\tau+c_3\tau)^n-1}{\alpha c_1^{\alpha-1}c_2+c_3}\bigg),\quad n\ge 1.
\end{equation}
\end{lemma}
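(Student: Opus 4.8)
The plan is to dominate the mixed linear/sublinear recursion \eqref{disgronwalleqs1} by a purely linear one and then solve that explicitly. First I would set
\[
z(n):=c_1+c_2\tau\sum_{m=0}^{n-1}y^\alpha(m)+c_3\tau\sum_{m=0}^{n-1}y(m),
\]
so that $y(n)\le z(n)$ for every $n\ge1$, the sequence $\{z(n)\}$ is nondecreasing (each new summand is nonnegative), and $z(0)=c_1$ (with the harmless convention $y(0)\le c_1$, as holds in the applications). Taking the one-step difference and using $y(n-1)\le z(n-1)$ together with the monotonicity of $t\mapsto t^\alpha$,
\[
z(n)-z(n-1)=c_2\tau\,y^\alpha(n-1)+c_3\tau\,y(n-1)\le c_2\tau\,z^\alpha(n-1)+c_3\tau\,z(n-1).
\]

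The key --- and essentially the only nontrivial --- step is to linearize $z^\alpha(n-1)$. Since $\alpha\in(0,1]$, the function $t\mapsto t^\alpha$ is concave on $[0,\infty)$, hence lies below its tangent line at $t=c_1$:
\[
t^\alpha\le c_1^\alpha+\alpha c_1^{\alpha-1}(t-c_1)=(1-\alpha)c_1^\alpha+\alpha c_1^{\alpha-1}t,\qquad t\ge0.
\]
Applying this with $t=z(n-1)$ and inserting it above converts the recursion into a genuinely linear one,
\[
z(n)\le(1+a\tau)\,z(n-1)+g\tau,\qquad a:=c_3+\alpha c_1^{\alpha-1}c_2,\quad g:=(1-\alpha)c_1^\alpha c_2.
\]

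Finally I would solve this constant-coefficient inequality by discrete variation of constants, i.e. by iterating $n$ times and summing the resulting geometric progression; with $z(0)=c_1$ and $a>0$ this gives
\[
z(n)\le(1+a\tau)^n c_1+g\tau\sum_{j=0}^{n-1}(1+a\tau)^j=(1+a\tau)^n c_1+\frac{g}{a}\big((1+a\tau)^n-1\big).
\]
A short rearrangement --- write $(1+a\tau)^n c_1=c_1+c_1\big((1+a\tau)^n-1\big)$ and check that $c_1a+g=c_1c_3+c_1^\alpha c_2=c_1b$ with $b:=c_1^{\alpha-1}c_2+c_3$ --- turns the right-hand side into $c_1\big(1+b\,\frac{(1+a\tau)^n-1}{a}\big)$, which is precisely \eqref{disgronwalleqs20} since $1+a\tau=1+\alpha c_1^{\alpha-1}c_2\tau+c_3\tau$; combined with $y(n)\le z(n)$ this proves the claim. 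I expect the concavity/tangent-line linearization to be the step requiring care: it is what trades the awkward exponent $\alpha$ for the clean linear rate $a$ while simultaneously producing the correct prefactor $b$, after which only routine bookkeeping remains. The borderline case $\alpha=1$ is automatically included: then $g=0$, $a=b=c_2+c_3$, and the bound collapses to the classical discrete Gr\"onwall estimate $y(n)\le c_1(1+(c_2+c_3)\tau)^n$.
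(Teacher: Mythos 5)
Your proof is correct, and it is the standard argument for this kind of mixed sublinear/linear discrete Gr\"onwall inequality: majorize $y$ by the nondecreasing right-hand side $z$, linearize the concave term $z^\alpha$ by its tangent line at $t=c_1$, and solve the resulting linear recursion; the algebraic identity $c_1a+g=c_1b$ then reproduces \eqref{disgronwalleqs20} exactly, and your check of the case $\alpha=1$ is a good sanity test. Note that the paper itself does not prove this lemma --- it is quoted from \cite{Wang2024Error} --- so there is no in-paper proof to compare against; your argument is a self-contained and correct derivation.

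One point you rightly flag but should not gloss over: the step $z(1)-z(0)=c_2\tau y^\alpha(0)+c_3\tau y(0)\le c_2\tau z^\alpha(0)+c_3\tau z(0)$ requires $y(0)\le z(0)=c_1$, which does \emph{not} follow from hypothesis \eqref{disgronwalleqs1} (that hypothesis only constrains $y(n)$ for $n\ge1$, leaving $y(0)$ free, and the conclusion at $n=1$ is false if $y(0)$ is large). So the lemma as stated is incomplete without the extra assumption $y(0)\le c_1$. This is harmless here because in the application one has $y(0)=\|e^0\|^2=0$, but it is an implicit hypothesis rather than a mere ``convention,'' and a careful write-up should state it.
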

\begin{lemma}\cite{Cazenave1980}\label{LipSC}
For any $u, v \in \mathbb{C}$, we obtain
\begin{equation}\label{LipSCeq}
\big|{\rm{Im}} [(f(u)-f(v))(u-v)^*]\big| \leq |u-v|^{2}.
\end{equation}
\end{lemma}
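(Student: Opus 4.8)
The plan is to peel off from $f(u)-f(v)$ the part that is a \emph{real} multiple of $u-v$. Since $\ln|\cdot|$ is real‑valued, I would write
\[
f(u)-f(v) = u\,\big(\ln|u|-\ln|v|\big) + (u-v)\ln|v| ,
\]
multiply by $\overline{(u-v)}$ and take imaginary parts; the term $\ln|v|\,|u-v|^2$ is real and drops out, so that
\[
{\rm Im}\!\big[(f(u)-f(v))\overline{(u-v)}\big]=\big(\ln|u|-\ln|v|\big)\,{\rm Im}\!\big[u\,\overline{(u-v)}\big].
\]
First I would dispose of the degenerate cases $u=0$ or $v=0$: with the convention $0\ln 0=0$ the left‑hand side of \eqref{LipSCeq} then vanishes and the bound is trivial. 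From now on assume $|u|,|v|>0$, and, since both sides of \eqref{LipSCeq} are symmetric under $u\leftrightarrow v$, assume moreover $0<|v|\le|u|$.

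Next I would bound the two factors on the right. For the geometric factor, writing $u=(u-v)+v$ and discarding the real term $|u-v|^2$ gives ${\rm Im}[u\,\overline{(u-v)}]={\rm Im}[v\,\overline{(u-v)}]$, hence $\big|{\rm Im}[u\,\overline{(u-v)}]\big|\le |v|\,|u-v|$. For the logarithmic factor, $\big|\ln|u|-\ln|v|\big|=\ln(|u|/|v|)$, and the elementary inequality $\ln t\le t-1$ applied with $t=|u|/|v|\ge 1$ yields
\[
|v|\,\big(\ln|u|-\ln|v|\big)=|v|\ln\!\big(|u|/|v|\big)\le |v|\Big(\tfrac{|u|}{|v|}-1\Big)=|u|-|v|\le|u-v|,
\]
the last step being the reverse triangle inequality.

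Finally I would assemble the pieces:
\[
\big|{\rm Im}\!\big[(f(u)-f(v))\overline{(u-v)}\big]\big|
=\big(\ln|u|-\ln|v|\big)\,\big|{\rm Im}[u\,\overline{(u-v)}]\big|
\le \Big(|v|\,\big(\ln|u|-\ln|v|\big)\Big)\,|u-v|\le |u-v|^2,
\]
which is exactly \eqref{LipSCeq}. This argument is short and presents no real obstacle; the only subtle point worth flagging is that the factor $\ln|u|-\ln|v|$ blows up when either modulus tends to $0$, but it is always accompanied by the competing factor $\min\{|u|,|v|\}$, and the single inequality $\ln t\le t-1$ (together with $\big||u|-|v|\big|\le|u-v|$) is precisely what keeps that product bounded by $|u-v|$.
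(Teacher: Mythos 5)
Your proof is correct. The paper does not prove this lemma at all --- it simply cites it from Cazenave--Haraux --- and your argument is essentially the standard one from that reference: isolate the factor $\ln|u|-\ln|v|$ times $\mathrm{Im}[u\overline{(u-v)}]$, bound the geometric factor by $\min\{|u|,|v|\}\,|u-v|$, and control the product $\min\{|u|,|v|\}\,\big|\ln|u|-\ln|v|\big|$ by $\big||u|-|v|\big|\le|u-v|$ via $\ln t\le t-1$; the degenerate cases $u=0$ or $v=0$ are handled correctly as well.
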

%
\begin{lemma}[Lemma $2.1$ in \cite{Paraschis2023On}]\label{Nep}
For any $\varepsilon>0$, it holds that
\begin{equation}\label{LipSC-1eq}
\sup_{z\in \mathbb{C}}\big|f(z)-f_\varepsilon(z)\big| \leq \varepsilon,
\end{equation} where $f_{\varepsilon}(z):=z\ln(\varepsilon+|z|).$
\end{lemma}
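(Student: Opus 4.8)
The final statement to prove is Lemma \ref{Nep}, which asserts that $\sup_{z\in\mathbb{C}}|f(z)-f_\varepsilon(z)|\le\varepsilon$ where $f(z)=z\ln|z|$ and $f_\varepsilon(z)=z\ln(\varepsilon+|z|)$.

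\textbf{Proof proposal.} The plan is to fix $z\in\mathbb{C}$ and write $r=|z|\ge 0$, so that
\[
\big|f(z)-f_\varepsilon(z)\big|=\big|z\big|\,\big|\ln r-\ln(\varepsilon+r)\big|=r\,\big(\ln(\varepsilon+r)-\ln r\big)=r\,\ln\!\Big(1+\frac{\varepsilon}{r}\Big),
\]
using that $\varepsilon>0$ makes $\varepsilon+r>r$ so the logarithm difference is nonnegative (and for $r=0$ the product $z\ln|z|$ is interpreted as $0$, and $f_\varepsilon(0)=0$ as well, so the bound holds trivially there). Thus it suffices to show $g(r):=r\ln(1+\varepsilon/r)\le\varepsilon$ for all $r>0$. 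The key elementary inequality is $\ln(1+x)\le x$ for $x>-1$, which I would apply with $x=\varepsilon/r>0$ to get $r\ln(1+\varepsilon/r)\le r\cdot(\varepsilon/r)=\varepsilon$. Taking the supremum over $z$ (equivalently over $r\ge 0$) then yields the claim. Alternatively, one can substitute $s=\varepsilon/r$ and note $g=\varepsilon\cdot\frac{\ln(1+s)}{s}$, and $\frac{\ln(1+s)}{s}\le 1$ for $s>0$; either route is immediate.

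\textbf{Main obstacle.} There is essentially no obstacle here — the only point requiring a word of care is the behavior at $z=0$, where one must recall the standard convention $0\cdot\ln 0:=0$ (equivalently $\lim_{r\to 0^+} r\ln r=0$) so that both $f(0)$ and $f_\varepsilon(0)$ vanish and the difference is $0\le\varepsilon$; one may also check the limit $\lim_{r\to0^+} r\ln(1+\varepsilon/r)=0$ directly, which is consistent. The rest is the one-line inequality $\ln(1+x)\le x$.

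\begin{proof}
Fix $z\in\mathbb{C}$ and set $r=|z|$. If $r=0$, then with the convention $0\cdot\ln 0=0$ we have $f(z)=f_\varepsilon(z)=0$, so $|f(z)-f_\varepsilon(z)|=0\le\varepsilon$. If $r>0$, then since $\varepsilon>0$,
\[
\big|f(z)-f_\varepsilon(z)\big|=|z|\,\big|\ln|z|-\ln(\varepsilon+|z|)\big|=r\ln\!\Big(1+\frac{\varepsilon}{r}\Big)\le r\cdot\frac{\varepsilon}{r}=\varepsilon,
\]
where we used the elementary inequality $\ln(1+x)\le x$ valid for all $x>-1$, applied with $x=\varepsilon/r>0$. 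Since the bound $\varepsilon$ is independent of $z$, taking the supremum over $z\in\mathbb{C}$ gives \eqref{LipSC-1eq}.
\end{proof}
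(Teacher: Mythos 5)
Your proof is correct: the reduction to $r\ln(1+\varepsilon/r)\le\varepsilon$ via $\ln(1+x)\le x$, together with the convention $f(0)=f_\varepsilon(0)=0$, is exactly the standard argument for this elementary bound. The paper itself states this lemma by citation to Lemma~2.1 of \cite{Paraschis2023On} without reproducing a proof, and your argument coincides with the one given there.
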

%
\begin{lemma}[Lemma $2.2$ in \cite{Paraschis2023On}]\label{Nep}
For any $c>e$ and $\varepsilon\in(0,\frac{1}{2c})$, it holds that
\begin{equation}\label{LipSC-1eq-2}
 \big|f_\varepsilon(z)-f_\varepsilon(w)\big| \leq 2|\ln(\varepsilon)||z-w|, \quad
 \forall z,w\in \Gamma_{c},
\end{equation}
where $\Gamma_{c}:=\{v\in\mathbb{C}: |v|\leq{c}\}$.
\end{lemma}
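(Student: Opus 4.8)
The plan is to exploit the multiplicative structure $f_\varepsilon(v)=v\,\psi(|v|)$, with $\psi(r):=\ln(\varepsilon+r)$, and to split the difference into a ``linear'' piece and a ``radial'' piece. Since both sides of \eqref{LipSC-1eq-2} are symmetric in $z$ and $w$, I will assume without loss of generality that $|z|\ge|w|$ (the case $z=w$ being trivial), and write
\[
f_\varepsilon(z)-f_\varepsilon(w)=(z-w)\,\psi(|z|)+w\big(\psi(|z|)-\psi(|w|)\big),
\]
an identity that also covers the degenerate case $w=0$. It then remains to bound the two terms on the right separately.

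For the first term I will prove the pointwise bound $|\psi(|z|)|=|\ln(\varepsilon+|z|)|\le|\ln\varepsilon|$, valid for all $0\le|z|\le c$ under the hypothesis $0<\varepsilon<\tfrac1{2c}$: if $\varepsilon+|z|\le1$ this is immediate from $\varepsilon\le\varepsilon+|z|\le1$; if $\varepsilon+|z|>1$ then $|\ln(\varepsilon+|z|)|=\ln(\varepsilon+|z|)\le\ln(\varepsilon+c)$, and $c>e$ together with $\varepsilon<\tfrac1{2c}$ gives $\varepsilon(\varepsilon+c)=\varepsilon^2+\varepsilon c<\tfrac14+\tfrac12<1$, hence $\ln(\varepsilon+c)<-\ln\varepsilon=|\ln\varepsilon|$. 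Consequently the first term is bounded by $|\ln\varepsilon|\,|z-w|$.

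For the second term I will apply the mean value theorem to $\psi$ on the interval with endpoints $\varepsilon+|w|$ and $\varepsilon+|z|$, producing $\xi$ in between with $\psi(|z|)-\psi(|w|)=\xi^{-1}(|z|-|w|)$; since $|z|\ge|w|$ we have $\xi\ge\varepsilon+|w|\ge|w|$, so $|w|\,|\psi(|z|)-\psi(|w|)|\le\big||z|-|w|\big|\le|z-w|$ by the reverse triangle inequality. Adding the two estimates gives $|f_\varepsilon(z)-f_\varepsilon(w)|\le(|\ln\varepsilon|+1)\,|z-w|$, and finally $\varepsilon<\tfrac1{2c}<\tfrac1e$ forces $|\ln\varepsilon|>1$, so that $|\ln\varepsilon|+1\le2|\ln\varepsilon|$, which is \eqref{LipSC-1eq-2}.

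The only genuinely delicate point is the estimate $|\ln(\varepsilon+|z|)|\le|\ln\varepsilon|$ for the linear piece: this is exactly where the specific form of the hypothesis $\varepsilon<\frac1{2c}$ (and not merely ``$\varepsilon$ small'') is needed, since it simultaneously keeps $\ln(\varepsilon+c)$ below $|\ln\varepsilon|$ and guarantees $|\ln\varepsilon|>1$, so the residual constant can be absorbed into $2|\ln\varepsilon|$. The remaining steps are a routine application of the mean value theorem and the triangle inequality.
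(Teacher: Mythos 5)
Your argument is correct. Note first that the paper does not prove this statement at all: it is imported verbatim as Lemma~2.2 of \cite{Paraschis2023On}, so there is no in-paper proof to compare against, and a self-contained verification like yours is a genuine addition. Your decomposition $f_\varepsilon(z)-f_\varepsilon(w)=(z-w)\psi(|z|)+w\bigl(\psi(|z|)-\psi(|w|)\bigr)$ with $\psi(r)=\ln(\varepsilon+r)$ is the natural one, and each step checks out: the bound $|\ln(\varepsilon+|z|)|\le|\ln\varepsilon|$ on $\Gamma_c$ follows exactly as you say, since $\varepsilon(\varepsilon+c)<\tfrac14+\tfrac12<1$ under $\varepsilon<\tfrac1{2c}$ and $c>e$; the mean value theorem applied to $r\mapsto\ln(\varepsilon+r)$ gives $|w|\,|\psi(|z|)-\psi(|w|)|\le\bigl||z|-|w|\bigr|\le|z-w|$ because the intermediate point $\xi$ satisfies $\xi\ge\varepsilon+|w|\ge|w|$; and $\varepsilon<\tfrac1{2c}<e^{-1}$ forces $|\ln\varepsilon|>1$, so $|\ln\varepsilon|+1\le 2|\ln\varepsilon|$. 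You also correctly identify where each part of the hypothesis is used, which is the only delicate bookkeeping in the lemma. The one cosmetic remark is that your WLOG $|z|\ge|w|$ is indeed harmless since the asserted bound is symmetric, and your identity handles $w=0$ without needing $\psi$ at a singular point because $\psi(0)=\ln\varepsilon$ is finite.
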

%
\begin{lemma}[Lemma $3.1$ in \cite{Bao2013Optimal}]\label{l4norm}
For any $u \in X_h^0$, we get
\begin{equation}
\|u\|_4 \le \|u\|^{\frac{1}{2}}|u|^{\frac{1}{2}}_1.
\end{equation}
\end{lemma}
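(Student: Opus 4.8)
The plan is to recognise this as the two-dimensional discrete Ladyzhenskaya (Gagliardo--Nirenberg) inequality and to imitate the classical one-dimensional slicing proof at the grid level. For $u\in X_h^0$ I would introduce the slice maxima $M_k:=\max_{0\le j\le J}|u_{j,k}|$ for $1\le k\le K-1$ and $N_j:=\max_{0\le k\le K}|u_{j,k}|$ for $1\le j\le J-1$, and decouple the two coordinate directions by the pointwise bound $|u_{j,k}|^{4}=|u_{j,k}|^{2}\,|u_{j,k}|^{2}\le M_k^{2}N_j^{2}$. Summing this over $\mathcal{T}_{h}$ and factoring the mesh weights gives $\|u\|_4^{4}=h_xh_y\sum_{j,k}|u_{j,k}|^{4}\le\big(h_x\sum_{j}N_j^{2}\big)\big(h_y\sum_{k}M_k^{2}\big)$, so the task reduces to estimating the two ``line'' sums $h_y\sum_{k}M_k^{2}$ and $h_x\sum_{j}N_j^{2}$.

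The core step is a one-dimensional slice estimate. Fixing $k$ and using that $u\in X_h^0$ vanishes at $j=0$ and $j=J$, one has the two telescoping identities $|u_{j,k}|^{2}=\sum_{l=1}^{j}\big(|u_{l,k}|^{2}-|u_{l-1,k}|^{2}\big)=-\sum_{l=j+1}^{J}\big(|u_{l,k}|^{2}-|u_{l-1,k}|^{2}\big)$. I would average them, bound $\big|\,|u_{l,k}|^{2}-|u_{l-1,k}|^{2}\,\big|\le h_x|\delta_x^{+}u_{l-1,k}|\,\big(|u_{l,k}|+|u_{l-1,k}|\big)$ via the reverse triangle inequality, and then apply Cauchy--Schwarz in $l$ together with $\big(|u_{l+1,k}|+|u_{l,k}|\big)^{2}\le 2\big(|u_{l+1,k}|^{2}+|u_{l,k}|^{2}\big)$ (the boundary values being zero, each interior value counted at most twice) to arrive at
\[
M_k^{2}\le\Big(h_x\sum_{l}|\delta_x^{+}u_{l,k}|^{2}\Big)^{1/2}\Big(h_x\sum_{l}|u_{l,k}|^{2}\Big)^{1/2},
\]
and, by the symmetric computation in the $y$-direction, $N_j^{2}\le\big(h_y\sum_{m}|\delta_y^{+}u_{j,m}|^{2}\big)^{1/2}\big(h_y\sum_{m}|u_{j,m}|^{2}\big)^{1/2}$.

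To conclude, I would insert these slice bounds and apply one further Cauchy--Schwarz, in $k$ for the $M_k$-sum and in $j$ for the $N_j$-sum. Since $\delta_x^{+}u$ vanishes on the grid lines $k=0,K$ and $\delta_y^{+}u$ on $j=0,J$, the surviving double sums are exactly $\langle\delta_x^{+}u,\delta_x^{+}u\rangle_{h}$, $\langle\delta_y^{+}u,\delta_y^{+}u\rangle_{h}$ and $\|u\|^{2}$, so that $h_y\sum_{k}M_k^{2}\le\langle\delta_x^{+}u,\delta_x^{+}u\rangle_{h}^{1/2}\|u\|$ and $h_x\sum_{j}N_j^{2}\le\langle\delta_y^{+}u,\delta_y^{+}u\rangle_{h}^{1/2}\|u\|$. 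Multiplying these, and using $ab\le\tfrac{1}{2}(a^{2}+b^{2})$ with $|u|_1^{2}=\langle\delta_x^{+}u,\delta_x^{+}u\rangle_{h}+\langle\delta_y^{+}u,\delta_y^{+}u\rangle_{h}$, gives $\|u\|_4^{4}\le\tfrac{1}{2}\|u\|^{2}|u|_1^{2}\le\|u\|^{2}|u|_1^{2}$; a fourth root then yields $\|u\|_4\le\|u\|^{1/2}|u|_1^{1/2}$ (indeed with the sharper constant $2^{-1/4}$).

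I do not expect a genuine obstacle here, as the entire argument is discrete summation by parts combined with Cauchy--Schwarz. The only point demanding care is the bookkeeping of index ranges and of the powers of $h_x$ and $h_y$: the homogeneous Dirichlet condition must be used both to close the telescoping and to identify the partial sums with $\|u\|^{2}$, $\langle\delta_x^{+}u,\delta_x^{+}u\rangle_{h}$ and $\langle\delta_y^{+}u,\delta_y^{+}u\rangle_{h}$ exactly, which is precisely what produces the clean constant in the stated form.
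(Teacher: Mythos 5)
Your proof is correct, and it is essentially the standard argument: the paper itself states this lemma without proof, quoting Lemma 3.1 of Bao and Cai \cite{Bao2013Optimal}, whose proof is exactly this slicing of $|u_{j,k}|^4\le M_k^2N_j^2$, the telescoping/averaging bound on each line maximum, and Cauchy--Schwarz to assemble $\langle\delta_x^{+}u,\delta_x^{+}u\rangle_h$, $\langle\delta_y^{+}u,\delta_y^{+}u\rangle_h$ and $\|u\|^2$. Your bookkeeping of the factors of $2$ and of the index ranges (using the homogeneous Dirichlet condition to close the telescoping sums) is sound and even yields the slightly sharper constant $2^{-1/4}$.
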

\begin{lemma}\cite{zhou1990application}\label{uinflemma}
For any $u \in X_h^0$, we have
\begin{equation}\label{uinf}
\|u\|_{\infty} \leq C\|u\|^{\frac{1}{2}}(|u|_2+\|u\|)^{\frac{1}{2}},
\end{equation}
where $C$ depends on the size of $\Omega$ but is independent of $u$ and $h$.
\end{lemma}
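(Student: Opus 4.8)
The plan is to prove \eqref{uinf} as the discrete counterpart of the two--dimensional Agmon (Gagliardo--Nirenberg) embedding $\|u\|_{L^{\infty}}\lesssim\|u\|_{L^{2}}^{1/2}\|u\|_{H^{2}}^{1/2}$; this is classical and due to \cite{zhou1990application}. I would proceed in three stages: a one--dimensional discrete Sobolev inequality, a dimensional reduction of $\|u\|_{\infty}$ to products of ``slice'' norms, and finally a discrete elliptic--regularity step that reduces the resulting mixed second difference to $|u|_{2}=\|\delta_{\nabla}^{2}u\|$.

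First I would record the one--dimensional fact: if $(w_{0},\dots,w_{M})$ is a grid function with $w_{0}=w_{M}=0$, weight $h$, and $\|w\|_{1}^{2}:=h\sum_{\ell}|w_{\ell}|^{2}$, $|w|_{1}^{2}:=h\sum_{\ell}|\delta^{+}w_{\ell}|^{2}$, then $\max_{j}|w_{j}|^{2}\le 2\|w\|_{1}\,|w|_{1}$. This follows from the telescoping identity $w_{j}^{2}=\sum_{\ell<j}(w_{\ell+1}+w_{\ell})(w_{\ell+1}-w_{\ell})$, the relation $|w_{\ell+1}-w_{\ell}|=h|\delta^{+}w_{\ell}|$, the Cauchy--Schwarz inequality and $(a+b)^{2}\le 2a^{2}+2b^{2}$; applied to the real grid function $(|u_{\ell,k}|)_{\ell}$ one also uses $\big||u_{\ell+1,k}|-|u_{\ell,k}|\big|\le|u_{\ell+1,k}-u_{\ell,k}|$.

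Next, fix $(j,k)\in\mathcal{T}_{h}$ where $\|u\|_{\infty}$ is attained and apply the above along the $k$-th row, which vanishes at $j=0,J$ because $u\in X_{h}^{0}$; this gives $\|u\|_{\infty}^{2}\le 2A_{k}^{1/2}B_{k}^{1/2}$ with $A_{k}:=h_{x}\sum_{\ell}|u_{\ell,k}|^{2}$ and $B_{k}:=h_{x}\sum_{\ell}|\delta_{x}^{+}u_{\ell,k}|^{2}$. Since $A_{0}=A_{K}=0$ and $B_{0}=0$ (the bottom and top rows of $u$ vanish), I would telescope $A_{k}$ and $B_{k}$ in the $y$-direction, e.g. $A_{k}=\sum_{m<k}h_{x}\sum_{\ell}(|u_{\ell,m+1}|^{2}-|u_{\ell,m}|^{2})$, bound $\big||u_{\ell,m+1}|^{2}-|u_{\ell,m}|^{2}\big|\le(|u_{\ell,m+1}|+|u_{\ell,m}|)h_{y}|\delta_{y}^{+}u_{\ell,m}|$, and apply Cauchy--Schwarz over $(\ell,m)$ to get $\max_{k}A_{k}\le C\|u\|\,|u|_{1}$; the same computation for $B_{k}$, using $\delta_{x}^{+}u_{\ell,m+1}-\delta_{x}^{+}u_{\ell,m}=h_{y}\,\delta_{x}^{+}\delta_{y}^{+}u_{\ell,m}$, gives $\max_{k}B_{k}\le C\,|u|_{1}\,\|\delta_{x}^{+}\delta_{y}^{+}u\|$, where $\|\delta_{x}^{+}\delta_{y}^{+}u\|^{2}:=h_{x}h_{y}\sum|\delta_{x}^{+}\delta_{y}^{+}u_{\ell,m}|^{2}$. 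Hence $\|u\|_{\infty}^{2}\le C\|u\|^{1/2}\,|u|_{1}\,\|\delta_{x}^{+}\delta_{y}^{+}u\|^{1/2}$.

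It remains to express the right-hand side through $\|u\|$ and $|u|_{2}$. Discrete summation by parts and the identities recalled in Section~\ref{sect2} give $|u|_{1}^{2}=-(\delta_{\nabla}^{2}u,u)_{h}\le|u|_{2}\|u\|$, i.e. $|u|_{1}\le\|u\|^{1/2}|u|_{2}^{1/2}$. Applying summation by parts twice, first in $x$ and then in $y$ (the $y$-boundary terms vanish because $u$, and hence $\delta_{x}^{+}u$, vanishes on $\Gamma_{h}$), yields $(\delta_{x}^{2}u,\delta_{y}^{2}u)_{h}=\|\delta_{x}^{+}\delta_{y}^{+}u\|^{2}$, so that $|u|_{2}^{2}=\|\delta_{\nabla}^{2}u\|^{2}=\|\delta_{x}^{2}u\|^{2}+\|\delta_{y}^{2}u\|^{2}+2\|\delta_{x}^{+}\delta_{y}^{+}u\|^{2}$, whence $\|\delta_{x}^{+}\delta_{y}^{+}u\|\le|u|_{2}$. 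Substituting the two bounds, $\|u\|_{\infty}^{2}\le C\|u\|^{1/2}\cdot(|u|_{2}\|u\|)^{1/2}\cdot|u|_{2}^{1/2}=C\|u\|\,|u|_{2}\le C\|u\|\,(|u|_{2}+\|u\|)$, and taking square roots gives \eqref{uinf} (indeed a little more, $\|u\|_{\infty}\le C\|u\|^{1/2}|u|_{2}^{1/2}$; the extra $\|u\|$ in the statement is a harmless weakening). The only genuinely load-bearing step is the last identity: for a general grid function $|u|_{2}=\|\delta_{\nabla}^{2}u\|$ does not control the full discrete Hessian, so the reduction of $\|\delta_{x}^{+}\delta_{y}^{+}u\|$ to $|u|_{2}$ uses the homogeneous Dirichlet condition $u\in X_{h}^{0}$ in an essential way, and that is where I would be careful; everything else is elementary bookkeeping. (The estimate in this form is two--dimensional: for $d=1$ a stronger bound holds and for $d=3$ the exponents must change, but only $d=2$ is needed here.)
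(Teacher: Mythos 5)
Your proof is correct, and it is essentially the standard discrete-functional-analysis argument behind the cited result of Zhou (the paper itself only quotes the lemma without proof): a one-dimensional discrete Sobolev inequality, dimension splitting via telescoping in $y$, the interpolation $|u|_{1}^{2}=-(\delta_{\nabla}^{2}u,u)_{h}\le|u|_{2}\|u\|$, and the cross-term identity $(\delta_{x}^{2}u,\delta_{y}^{2}u)_{h}=\|\delta_{x}^{+}\delta_{y}^{+}u\|^{2}$, which indeed is the only step that genuinely uses $u\in X_{h}^{0}$ and which you have justified correctly. Your sharper conclusion $\|u\|_{\infty}\le C\|u\|^{1/2}|u|_{2}^{1/2}$ implies the stated bound, so nothing is missing.
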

\section{Error estimates}\label{sect3}
Assume that $u$ is smooth enough and satisfies
\begin{equation}\label{ucnA}
u\in C^2([0,T];L^{\infty}(\Omega))\cap C^1([0,T];W^{2,\infty}(\Omega))\cap C^0([0,T];W^{4,\infty}(\Omega)\cap H^1_0(\Omega)).
\end{equation}
Denote the exact grid solution $U^{n}\in X_{h}^{0}$ and error grid function $e^{n}\in X_{h}^{0}$ as
\begin{align}
&U_{j,k}^{n}:=u(x_{j},y_{k},t_{n}), \quad e^{n}_{j,k}:=U_{j,k}^{n}-u_{j,k}^{n}, \quad {(j,k)}\in\mathcal{T}_{h}^{0},\;\; n=0,1,2,\dots,N.\nonumber
\end{align}
\begin{thm}\label{enzhao0} Assume $h\lesssim h_{min}$, then
under the assumption \eqref{ucnA}, there exist $h_0>0,\,\tau_0>0$ sufficiently small, when $0<h\le h_0,\,0<\tau\le \tau_0$,
 we have the following error estimates of the BDF1 scheme \eqref{lognlsbdf1}:
\begin{subequations}\label{errbndiefd}
\begin{align}
&\|e^{n}\| \le  C_{u}  (\tau + h^2),\quad  n=0,1,\cdots, N,\label{errbndiefd-1}\\
&|e^{n}|_{1} \le  C_{u}{\tau}^{-\frac{1}{2}}(\tau + h^2),\quad  n=0,1,\cdots, N,\label{errbndiefd-2}
\end{align}
\end{subequations}
where $C_u$ is a generic positive constant independent of $\tau$ and $h,$ but depends on the norms of $u$ with regularity given in \eqref{ucnA}, as well as on the parameters $|\lambda|, |\Omega|$ and  $T$.
\end{thm}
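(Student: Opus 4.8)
\emph{Proof plan.} The plan is to prove \eqref{errbndiefd-1}--\eqref{errbndiefd-2} by an induction/bootstrap argument organised around a \emph{temporal--spatial error splitting}, which is the device that removes any grid-ratio condition. Introduce the time-semidiscrete (continuous-in-space) solution $\tilde u^{n}\in H^1_0(\Omega)$ by $\ri\,\delta^{-}_{t}\tilde u^{n+1}+\Delta\tilde u^{n+1}=2\lambda f(\tilde u^{n})$, $\tilde u^{n+1}|_{\partial\Omega}=0$, $\tilde u^{0}=u_0$, which is well posed since $(I-\ri\tau\Delta)^{-1}$ is bounded, and set $\tilde U^{n}_{j,k}:=\tilde u^{n}(x_j,y_k)$. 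With the splitting $e^{n}=\eta^{n}+\hat e^{n}$, $\eta^{n}:=U^{n}-\tilde U^{n}$, $\hat e^{n}:=\tilde U^{n}-u^{n}$, the theorem reduces to a temporal estimate $\|u(\cdot,t_n)-\tilde u^{n}\|_{L^2}\lesssim\tau$ and a spatial estimate $\|\hat e^{n}\|\lesssim h^{2}$ (plus their $H^1$ analogues), \emph{provided} the semidiscrete solution inherits enough regularity uniformly in $\tau$. Proving $\sup_{0\le n\le N}\|\tilde u^{n}\|_{W^{4,\infty}(\Omega)}\le C$ is the technical heart and the chief obstacle, since $f(z)=z\ln|z|$ is not differentiable at $z=0$ and $W^{4,\infty}$-regularity cannot simply be propagated through the nonlinearity by elliptic estimates; one must bootstrap together the uniform-in-$\tau$ bound $\|u(\cdot,t_n)-\tilde u^{n}\|_{L^2}\lesssim\tau$ (proved next), an $L^\infty$-in-time bound $\|\tilde u^{n}\|_{L^\infty}\le C$ from discrete stability plus that $L^2$ error, and stepwise elliptic regularity for $(I-\ri\tau\Delta)^{-1}$ applied to the $L^\infty$-bounded datum $\tilde u^{n}-2\lambda\ri\tau f(\tilde u^{n})$, compared with $u(\cdot,t_n)$ in higher norms. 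The two facts that make this work are that $f(\tilde u^{n})$ stays bounded in $L^\infty$ even on the zero set of $\tilde u^{n}$, and that Lemma \ref{Holderlmm} bounds $f(\tilde u^{n})-f(u(\cdot,t_n))$ with no Lipschitz constant appearing.

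\emph{Temporal error.} The equation for $\eta^{n}=u(\cdot,t_n)-\tilde u^{n}$ has truncation $\ri\bigl(\delta^{-}_{t}u(\cdot,t_{n+1})-\partial_t u(\cdot,t_{n+1})\bigr)=O(\tau)$ in $L^2$ (by $u\in C^2_t L^2$) together with the nonlinear lag $2\lambda\bigl(f(u(\cdot,t_{n+1}))-f(u(\cdot,t_{n}))\bigr)$, which is $O(\tau)$ in $L^2$ because $\partial_t f(u)=(\ln|u|^2+1)\partial_t u+(\text{bounded})$ and $|\ln|u(\cdot,t)||$ is square-integrable under \eqref{ucnA}. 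Testing with $\eta^{n+1}$, taking imaginary parts, and splitting $\eta^{n+1}=\eta^{n}+\tau\delta^{-}_{t}\eta^{n+1}$, I would estimate the term pairing $f(u(\cdot,t_n))-f(\tilde u^{n})$ against $\eta^{n}$ by Lemma \ref{LipSC}, which gives the clean bound $\le\|\eta^{n}\|^{2}$ with \emph{no} logarithmic or H\"older factor, and the remaining $\tau\delta^{-}_{t}\eta^{n+1}$-piece by Lemma \ref{N2L2} with a fixed small $\epsilon$, arriving at $\|\eta^{n+1}\|^{2}\le(1+C\tau)\|\eta^{n}\|^{2}+C\tau^{2}\|\eta^{n}\|^{2\alpha}+C\tau^{3}$. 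The H\"older--Gr\"onwall inequality of Lemma \ref{disgronwalls} then yields $\|\eta^{n}\|\lesssim\tau$; running the same computation on $\nabla\eta^{n}$ gives the temporal $H^1$ bound.

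\emph{Spatial error and unconditionality.} The grid error satisfies $\ri\,\delta^{-}_{t}\hat e^{n+1}+\delta^{2}_{\nabla}\hat e^{n+1}=2\lambda\bigl(f(\tilde U^{n})-f(u^{n})\bigr)+\hat R^{n+1}$, $\hat e^{0}=0$, where $\hat R^{n+1}=\delta^{2}_{\nabla}\tilde U^{n+1}-(\Delta\tilde u^{n+1})|_{\mathrm{grid}}$ is \emph{purely} a spatial truncation — the time-discretisation is identical in the two schemes — so $\|\hat R^{n+1}\|\lesssim h^{2}$ uniformly, with \emph{no} factor of $\tau$, thanks to the regularity established above. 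I induct on $n$: assuming $\|\hat e^{m}\|\le C_u h^{2}$ for $m\le n$, a discrete embedding inequality applied to $\hat e^{m}$, which carries no inverse power of $\tau$, gives $\|u^{m}\|_{\infty}\le\|\tilde U^{m}\|_{\infty}+\|\hat e^{m}\|_{\infty}\le C+1$ for $h$ small, \emph{with no relation between $\tau$ and $h$}. With $\Lambda_\infty:=\max\{\|\tilde U^{n}\|_\infty,\|u^{n}\|_\infty\}\le C$ in hand, take the $h$-inner product of the $\hat e$-equation with $\hat e^{n+1}$, take imaginary parts, and use $\mathrm{Re}(\delta^{-}_{t}\hat e^{n+1},\hat e^{n+1})_h=\tfrac1{2\tau}\bigl(\|\hat e^{n+1}\|^2-\|\hat e^{n}\|^2+\|\hat e^{n+1}-\hat e^{n}\|^2\bigr)$. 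Splitting $\hat e^{n+1}=\hat e^{n}+(\hat e^{n+1}-\hat e^{n})$, the nonlinear contribution paired with $\hat e^{n}$ is $\le 2|\lambda|\|\hat e^{n}\|^{2}$ by Lemma \ref{LipSC}, while the one paired with $\hat e^{n+1}-\hat e^{n}$ is handled by Lemma \ref{N2L2} with $\epsilon$ chosen to decay with $\tau$ (e.g.\ $\epsilon=\tau$), followed by Young's inequality absorbed into the $\tfrac1{2\tau}\|\hat e^{n+1}-\hat e^{n}\|^2$ term, producing the bound $C\tau^{2}\bigl(|\ln\tau|^{2}\|\hat e^{n}\|^{2}+\tau^{2}|\ln\tau|^{2}\bigr)$. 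Since $\tau|\ln\tau|^{2}\to0$, the first piece merges into a $C\tau\|\hat e^{n}\|^{2}$ Gr\"onwall term and the second is an $O(\tau^{3}|\ln\tau|^{2})$ source; summing and applying the discrete Gr\"onwall inequality gives $\|\hat e^{n}\|^{2}\le C(h^{4}+\tau^{3}|\ln\tau|^{2})$, which closes the induction and, with the temporal estimate, yields \eqref{errbndiefd-1}.

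\emph{$H^1$ estimate and summary.} Summing the energy identity above also controls $\sum_{m}\|\hat e^{m+1}-\hat e^{m}\|^{2}\lesssim h^{4}+\tau^{3}|\ln\tau|^2$, i.e.\ $\|\delta^{-}_{t}\hat e^{m}\|\lesssim\tau^{-1}(h^{2}+\tau^{3/2}|\ln\tau|)$; inserting this and the $L^2$ bound into $|\hat e^{m}|_1^{2}=\mathrm{Im}(\delta^{-}_{t}\hat e^{m},\hat e^{m})_h+2\lambda\,\mathrm{Re}\bigl(f(\tilde U^{m-1})-f(u^{m-1}),\hat e^{m}\bigr)_h+\mathrm{Re}(\hat R^{m},\hat e^{m})_h$ gives $|\hat e^{m}|_1\lesssim\tau^{-1/2}(\tau+h^{2})$, which together with the temporal $H^1$ bound yields \eqref{errbndiefd-2}. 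In short, once the uniform-in-$\tau$ regularity of the time-discrete solution is secured, the whole argument is driven by Lemma \ref{LipSC} — which substitutes for the missing Lipschitz bound in the principal part of every energy estimate — and by Lemmas \ref{Holderlmm}, \ref{N2L2} and \ref{disgronwalls}, which absorb the residual near-zero terms with only a H\"older loss; the unconditional character is a direct consequence of the spatial error being genuinely $O(h^{2})$, free of any inverse power of $\tau$.
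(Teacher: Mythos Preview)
Your overall strategy---temporal--spatial splitting \`a la Li--Sun---is genuinely different from the paper's, which works directly with the fully discrete error $e^n=U^n-u^n$ and never introduces an auxiliary semidiscrete problem. The paper obtains unconditionality by a different device: after proving $\|e^{m+1}\|\le C_u(\tau+h^2)$, it reads the error equation \eqref{erroreqnA} as an equation for $\delta_\nabla^2 e^{m+1}$ to get $|e^{m+1}|_2\le C_u\tau^{-1}(\tau+h^2)$, and then combines this with the crude inverse bound $|e^{m+1}|_2\le C h^{-2}\|e^{m+1}\|$ via Lemma~\ref{uinflemma}, yielding the two estimates $\|e^{m+1}\|_\infty\le C_u(\tau^{1/2}+\tau^{-1/2}h^2)$ and $\|e^{m+1}\|_\infty\le C_u(h^{-1}\tau+h)$. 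For any pair $(\tau,h)$ one of these is small, so the induction closes with no grid-ratio restriction. The energy/H\"older--Gr\"onwall part of your argument (splitting $e^{n+1}=e^n+(e^{n+1}-e^n)$, using Lemma~\ref{LipSC} on the $e^n$-pairing, Lemma~\ref{N2L2} with $\epsilon=\tau$ on the increment, and Lemma~\ref{disgronwalls}) matches the paper's core mechanism exactly.

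However, there is a genuine gap in your plan, precisely at the point you flag as the ``chief obstacle'': the uniform-in-$\tau$ bound $\|\tilde u^n\|_{W^{4,\infty}}\le C$ (or even $\|\tilde u^n\|_{H^4}\le C$) is not merely hard to prove---it appears to be \emph{false} for this problem. Since $\tilde u^n$ satisfies homogeneous Dirichlet data, near $\partial\Omega$ one has $\tilde u^n\sim d$ (distance to boundary) and hence $f(\tilde u^n)\sim d\ln d$, so $\nabla^2 f(\tilde u^n)\sim d^{-1}\notin L^2(\Omega)$; thus $f(\tilde u^n)\notin H^2$ in general, and the resolvent $(I-\ri\tau\Delta)^{-1}$ cannot produce $\tilde u^{n+1}\in H^4$ with a $\tau$-independent bound (it gains two derivatives only at the cost of a factor $\tau^{-1}$). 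The same obstruction arises at interior zeros of $\tilde u^n$. Your proposed bootstrap (``compare with $u(\cdot,t_n)$ in higher norms'') does not escape this: controlling $\tilde u^n-u(\cdot,t_n)$ in $H^4$ would again require bounding $f(\tilde u^n)-f(u(\cdot,t_n))$ in $H^2$, which fails for the same reason. Without $\tilde u^{n+1}\in H^4$ uniformly, the purely spatial truncation $\hat R^{n+1}=\delta_\nabla^2\tilde U^{n+1}-(\Delta\tilde u^{n+1})|_{\rm grid}$ is not $O(h^2)$, and the splitting collapses. This is exactly why the paper works only with the regularity of the \emph{exact} solution (assumed in \eqref{ucnA}) and never asks an auxiliary iterate to be smooth; the two-bound trick for $\|e^{m+1}\|_\infty$ substitutes for the splitting.
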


If we further assume that $u$ satisfies
\begin{equation}\label{ucnB}
u\in {C}^{3}([0,T];L^{\infty}(\Omega))\cap {C}^{2}([0,T];W^{2,\infty}(\Omega))\cap{C}^{0}([0,T];W^{4,\infty}(\Omega)\cap H^1_0(\Omega)).
\end{equation}
Then, for the BDF2 scheme, we have the following error estimate result.
\begin{thm}\label{enzhao3} Assume $h\lesssim h_{min}$, then under
the assumption \eqref{ucnB}, there exist $h_0>0,\,\tau_0>0$ sufficiently small, when $0<h\le h_0,\,0<\tau\le \tau_0$,
  we have the following error estimates for the BDF2 scheme \eqref{lognlsbdf2}:
\begin{subequations}\label{errbndlefd}
\begin{align}
& \|e^{n}\|\le  C_{u}(\tau^{2}|\ln(\tau)| + h^{2}), \quad  n=0,1,\cdots, N,\label{errbndsifd-1}\\
&  |e^{n}|_{1} \le  C_{u}\tau^{-\frac{1}{2}}  ({\tau}^{2}|\ln(\tau)| + {h}^{2}), \quad  n=0,1,\cdots, N.\label{errbndsifd-2}
\end{align}
\end{subequations}
\end{thm}

\subsection{Proof of Theorem \ref{enzhao0} on the BDF1 scheme \eqref{lognlsbdf1}} To prove Theorem \ref{enzhao0}, we  first introduce and analyze the local truncation error of the BDF1 scheme.
\begin{lemma}\label{TnorderIEFD}
Under the assumption \eqref{ucnA},
the local truncation error $\xi^{n}\in{X}_{h}^{0}$ of the BDF1 scheme \eqref{lognlsbdf1} is defined as
\begin{equation}\label{trunT}
\xi^{n}_{j,k}:=\ri\delta^{+}_{t}U_{j,k}^{n}+\delta_{\nabla}^{2}{U}_{j,k}^{n+1}-2\lambda f(U_{j,k}^{n}),
 \quad (j,k)\in\mathcal{T}_{h},\;\; n=0,1,2,\ldots,N-1.
\end{equation}
Then  for all $0\le n\le N-1,$ we get
\begin{equation}\label{tnest}
\|\xi^{n}\|\le C_u(\tau+h^2).
\end{equation}
\end{lemma}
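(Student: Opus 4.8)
The plan is to exploit the fact that the BDF1 scheme discretizes the nonlinear term at the \emph{old} time level $t_n$, so that the non-Lipschitz function $f$ never needs to be compared across two different time levels. Since the exact solution satisfies $\ri\partial_t u+\Delta u=\lambda u\ln|u|^2=2\lambda f(u)$ pointwise, evaluating this identity at $(x_j,y_k,t_n)$ gives $2\lambda f(U_{j,k}^{n})=\ri\partial_t u(x_j,y_k,t_n)+\Delta u(x_j,y_k,t_n)$; substituting this into \eqref{trunT} cancels the logarithmic term entirely and leaves
\begin{equation*}
\xi^{n}_{j,k}=\ri\bigl[\delta^{+}_{t}U_{j,k}^{n}-\partial_t u(x_j,y_k,t_n)\bigr]+\bigl[\delta_{\nabla}^{2}U_{j,k}^{n+1}-\Delta u(x_j,y_k,t_n)\bigr],
\end{equation*}
so the estimate reduces to two standard finite-difference consistency errors.

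For the temporal part, a Taylor expansion of $u(x_j,y_k,\cdot)$ about $t_n$ gives $\delta^{+}_{t}U_{j,k}^{n}-\partial_t u(x_j,y_k,t_n)=\tfrac{\tau}{2}\,\partial_{tt}u(x_j,y_k,\theta^{n}_{j,k})$ for some $\theta^{n}_{j,k}\in(t_n,t_{n+1})$, which is bounded by $\tfrac{\tau}{2}\|u\|_{C^{2}([0,T];L^{\infty}(\Omega))}$ in view of \eqref{ucnA}. For the spatial part I would insert the intermediate value $\Delta u(x_j,y_k,t_{n+1})$ and split as
\begin{equation*}
\bigl[\delta_{\nabla}^{2}U_{j,k}^{n+1}-\Delta u(x_j,y_k,t_{n+1})\bigr]+\bigl[\Delta u(x_j,y_k,t_{n+1})-\Delta u(x_j,y_k,t_n)\bigr].
\end{equation*}
The first bracket is the classical truncation error of the second-order central difference operator: Taylor expanding in $x$ and in $y$ bounds it by $C(h_x^{2}+h_y^{2})\|u\|_{C([0,T];W^{4,\infty}(\Omega))}\le Ch^{2}\|u\|_{C([0,T];W^{4,\infty}(\Omega))}$, using $u(\cdot,t_{n+1})\in W^{4,\infty}(\Omega)$ and the mesh assumption $h\lesssim h_{\min}$. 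The second bracket equals $\tau\,\partial_t\Delta u(x_j,y_k,\widetilde\theta^{n}_{j,k})$ and is controlled by $\tau\|u\|_{C^{1}([0,T];W^{2,\infty}(\Omega))}$.

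Collecting the three contributions yields the pointwise bound $|\xi^{n}_{j,k}|\le C_u(\tau+h^2)$ for all $(j,k)\in\mathcal{T}_{h}$, with $C_u$ depending only on $|\lambda|$ and the norms of $u$ in \eqref{ucnA}; taking the discrete $L^2$ norm and using $h_xh_y\,\#\mathcal{T}_{h}<|\Omega|$ then gives $\|\xi^{n}\|\le C_u(\tau+h^2)$, as claimed. The only subtlety — and what one might mistake for the hard part — is that the nonlinearity must be expanded at $t_n$ rather than at $t_{n+1}$, the choice dictated by the structure of the scheme; once this is done, the non-differentiability of $f$ at $0$ plays no role whatsoever and the argument is entirely classical. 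The mild time-level mismatch between $\delta_{\nabla}^{2}U^{n+1}$ and $\Delta u(t_n)$ costs only an extra $O(\tau)$, which is harmless; the logarithmic factor that would appear if one instead tried to bound $f(U^{n+1})-f(U^{n})$ is thereby avoided.
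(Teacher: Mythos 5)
Your proposal is correct and follows essentially the same route as the paper's Appendix A: use the PDE at $t_n$ to cancel the nonlinear term exactly (so the non-differentiability of $f$ never enters), then bound the temporal difference quotient by $O(\tau)$, the central-difference error at $t_{n+1}$ by $O(h^2)$, and the time-level mismatch $\Delta u(t_{n+1})-\Delta u(t_n)$ by $O(\tau)$. The only cosmetic difference is that the paper writes the remainders in integral form rather than mean-value form (which is the safer choice for a complex-valued $u$), but the resulting bounds are identical.
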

(See Appendix \ref{AppendixA} for the  proof of Lemma \ref{TnorderIEFD} ).

Now we are ready to establish the error estimates of the BDF1 scheme given in Theorem \ref{enzhao0}.
\begin{proof}[Proof of Theorem \ref{enzhao0}]
Subtracting \eqref{lognlsbdf1} from \eqref{trunT}, we get the following `error' equation:
 \begin{subequations}\label{erroreqn}
\begin{align}
&\ri\delta^+_te^n_{j,k}+\delta_{\nabla}^2 e^{n+1}_{j,k}=\xi^{n}_{j,k}+\lambda\mathcal{N}_{j,k}^{n},
\quad {(j,k)}\in\mathcal{T}_{h},\;\; n=0,1,2,\dots,N-1,\label{erroreqnA}
\end{align}
\end{subequations}
where $\mathcal{N}^{n}\in X_{h}^{0}$ with
$\mathcal{N}_{j,k}^{n}:=2f(U_{j,k}^{n})-2f(u^n_{j,k}),\quad {(j,k)}\in\mathcal{T}_{h}^{0}$.

Now, we split our proof of  Theorem \ref{enzhao0} into the following two steps.

{\bf Step 1. (Estimate of $||e^{n}||$ for the BDF1 scheme)} In this step, we aim to prove the error estimate in the discrete $L^{2}$ norm given in \eqref{errbndiefd-1} and the bound of the numerical solution in the discrete $H^{1}$ norm by mathematical induction.

Obviously, $||{e}^{0}||=0$  and $||{u}^{0}||_{\infty}=||{U}^{0}||_{\infty}$ is bounded.  Assume that \eqref{errbndiefd-1} is valid and $||{u}^{n}||_{\infty}$ is bounded by $1+||{U}^{n}||_{\infty}$ for $0\le n\le m$ with $m<N$, and we next prove that they are still hold  for $n=m+1$.
Multiplying both sides of \eqref{erroreqnA} by $h_{1}h_{2}\bar{e}^{n+1}_{j,k}$ and summing together for $(j,k)\in \mathcal{T}_{h}$, then taking the imaginary part, we have
\begin{equation}\label{realpartA}
\begin{split}
\frac{ \|e^{n+1}\|^2 -\|e^{n}\|^2 }{2\tau} + \frac{\|{e}^{n+1}-e^{n} \|^2 }{2\tau} =\lambda {\rm{Im}}(\mathcal{N}^{n}, {e}^{n+1})_{h}
+ {\rm{Im}} (\xi^{n} , {e}^{n+1})_{h}, \;\; {0}\le{n}<N.
\end{split}
\end{equation}

In order to estimate the first term on the right-hand side  of \eqref{realpartA},
{we take $\alpha\in[\frac{1}{2},1)$} and apply Cauchy-Schwarz inequality and Lemmas \ref{imbedding}, \ref{N2L2}, \ref{LipSC} for $0<\epsilon\le  \delta_\alpha$ to obtain  the following results:
\begin{equation}\label{Nest}
\begin{split}
&| \lambda {\rm{Im}}(\mathcal{N}^{n}, {e}^{n+1})_{h}| \le |\lambda| \cdot |{\rm{Im}}( \mathcal{N}^{n},e^{n})_{h}| + |\lambda|\cdot  |{\rm{Im}}( \mathcal{N}^{n}, {e}^{n+1}-{e}^{n})_{h}|  \\
\le& 2|\lambda| \cdot  \|{e}^{n}\|^{2} +  \frac{|\lambda|^{2} \tau} 2 \| \mathcal{N}^{n} \|^{2} + \frac{\| {e}^{n+1} - {e}^{n} \|^{2} }{2\tau}  \\
\le&   { {2} (b-a)^{1-\alpha} }|\lambda|^2 {\mathcal H}^{2}_\alpha(\epsilon) \,\tau \, \|{e}^{n}\|^{2\alpha} + 2 |\lambda| \big( |\lambda|\Upsilon^2(\epsilon,{ {\Lambda}_\infty}) \,\tau +1 \big) \, \|{e}^{n}\|^{2}  + \frac{\| {e}^{n+1} - {e}^{n} \|^2 }{2\tau},
\end{split}
\end{equation}
where $ {\mathcal H}_\alpha( \epsilon)$ and $ \Upsilon(\epsilon,{{\Lambda}_\infty}) $ are defined in \eqref{holderbnd} and \eqref{Hupsilon} with
\begin{equation}\label{GmmA}
 {{\Lambda}_\infty}=\max_{0\le n\le m}\big\{\|u^{n}\|_\infty, \|U^{n}\|_{\infty}\big\}.
\end{equation}
For the second term on the right-hand side  of \eqref{realpartA},  deducing from Lemma \ref{TnorderIEFD}, we have
\begin{equation}\label{lastest2}
\begin{split}
&|{\rm{Im}} (\xi^{n}, e^{n+1})_{h}| \le \frac{1}{4}\|{e}^{n+1}\|^2 + \| \xi^{n} \|^{2}
 \le \frac{1}{4}\|{e}^{n+1}\|^2 + C_u (\tau^2+h^4).
\end{split}
\end{equation}
Substituting \eqref{Nest}-\eqref{lastest2} into \eqref{realpartA} gives
\begin{equation*}\label{lastest01}
	\begin{split}
		 \|{e}^{n+1}\|^2 - \|{e}^{n}\|^2
		& \le  \frac{\tau}{2}\| {e}^{n+1} \|^2 + 4|\Omega|^{1-\alpha} |\lambda|^2
		\mathcal{H}^2_\alpha(\epsilon) {\tau}^{2}  \| {e}^{n} \|^{2\alpha}
		\\&\quad + 4|\lambda| \tau  \big(|\lambda| \Upsilon^2(\epsilon,{{\Lambda}_\infty} ){\tau}  +1 \big) \| {e}^{n} \|^2
		+ {C_u}\,\tau ({\tau}^{2}+{h}^{4}).
	\end{split}
\end{equation*}
Summing the above inequality for $n=0,1,2,\ldots,m$ leads to
\begin{equation}\label{realpart2}
	\begin{split}
		\|e^{m+1}\|^2  &\le  \|e^{0}\|^2 +  \frac{ \tau}{2} \| e^{m+1} \|^2 +  4|\Omega|^{1-\alpha} |\lambda|^2
		\mathcal{H}^2_\alpha(\epsilon)  \tau^2  \sum_{n=0}^{m} \| e^n \|^{2\alpha}
		\\ &\quad + \tau  \Big( \frac{1}{2} + 4 |\lambda| + 4 |\lambda|^2 \Upsilon^{2}(\epsilon,{{\Lambda}_\infty}) \tau    \Big) \sum_{n=0}^{m}  \| e^{n} \|^2
		 + {C_u}{T}(\tau^2+ {{h^{4})} }.
	\end{split}
\end{equation}

Then for $0 <\tau < 1$ we derive from \eqref{realpart2} that there is
\begin{equation}\label{Upsilonem100}
\|e^{m+1}\|^2\le  c_1 + c_2\tau \sum_{n=0}^{m}\|e^n\|^{2\alpha}
+  c_3 \tau \sum_{n=0}^{m}  \|e^{n}\|^2,
\end{equation}
{where $\alpha\in [1/2,1)$} and
\begin{equation}\label{c123}
\begin{split}
c_1&=  C_u \, \widehat C_\tau\, T(\tau^2+ h^{4}), \quad c_2= 4 \widehat C_\tau\, |\Omega|^{1-\alpha} |\lambda|^2   {\mathcal H}^2_\alpha(\epsilon)\tau,\\
c_3&=\widehat C_\tau\, \Big( \frac 1 2+ 4 |\lambda| +4 |\lambda|^2 \Upsilon^{2}(\epsilon,{{\Lambda}_\infty}) \tau \Big),
\end{split}
\end{equation}
with $\widehat C_\tau=1/(1-\tau/2).$
By  the nonlinear Gr\"{o}nwall's inequality \eqref{disgronwalleqs20}, we have
\begin{equation}\label{disgronwalleqs200}
\begin{split}
\|e^{m+1}\|^2
& \leq c_1 \big(1-\alpha^{-1}+\alpha^{-1}{(1+\alpha c_1^{\alpha-1}c_2{\tau}+c_3{\tau})^{ m+1}}\big)\\
&\le c_1\big(1-\alpha^{-1} +\alpha^{-1} {\rm exp}\big({{ (m+1){\tau}} (\alpha c_1^{\alpha-1}c_2+c_3)}\big) \big),
\end{split}
\end{equation}
where the inequality $(1+z)^{ m+1}\leq e^{{(m+1)}z}$ for $z\ge 0$ was used.

For $0<\epsilon\le \delta_\alpha,$  we can bound the factor in the exponential as follows
\begin{equation}\label{Bronwalleqs00200}
\begin{split}
&G_m:={{(m+1)}{\tau} \big(\alpha { c_2 }/{ c_1^{ 1 - \alpha } }+c_3\big)}= { (m+1)}\tau  \widehat C_\tau \big( 1/2+ 4 |\lambda| +4 |\lambda|^2 \Upsilon^{2}(\epsilon,{\Lambda_\infty}) \tau \big)\\[6pt] &\quad +  \frac{ 4 { (m+1)} \alpha  \widehat C_\tau |\Omega|^{1-\alpha} |\lambda|^2   \tau^{2\alpha} {\mathcal H}^2_\alpha(\epsilon)}{({C_u \widehat C_\tau}\,T)^{1-\alpha} (1+ \tau^{-2} h^{4})^{1-\alpha}}
\\[6pt]
&\le  \widehat C_\tau T \big( 1/2+ 4 |\lambda| +4 |\lambda|^2 \Upsilon^{2}(\epsilon,{{\Lambda}_\infty}) \tau \big) +
4 \alpha C_u^{\alpha-1} (\widehat C_\tau T)^{\alpha}  |\Omega|^{1-\alpha}  |\lambda|^2   \tau^{2\alpha-1} {\mathcal H}^2_\alpha(\epsilon),
\end{split}
\end{equation}
where we refer the inequality: $(1+z)^{\beta}\le 1$ for $z\ge 0$ and $\beta=\alpha-1<0.$

We now choose suitable values of $\epsilon$ in the upper bound of $G_m$ in \eqref{Bronwalleqs00200},  so as to obtain the best possible order of convergence.
Taking $\epsilon=\tau$ in ${\mathcal H}^2_\alpha(\epsilon)$ and $\Upsilon^{2}(\epsilon,{{\Lambda}_\infty})$, noticing from the definition \eqref{Hupsilon} and \eqref{GmmA}  that 
 for $0<\tau  \le \delta_\alpha,$ we get
 \begin{equation}\label{UptauA00}
 \begin{split}
& \tau^{2\alpha-1} {\mathcal H}^2_\alpha(\tau)\le 4^{1-\alpha} \tau (|\!\ln \tau|+1)^2,
 \;\;\; \tau \Upsilon^2(\tau, {{\Lambda}_\infty})\le 2\tau \big\{ | \! \ln  \tau|^2+ (\ln({\Lambda_\infty+1}) +1)^2 \big\}.
\end{split}
 \end{equation}
For $0<\tau  \le \delta_\alpha<1$, there is
   $$1\le \widehat C_\tau\le \widehat C_{\delta_\alpha}\le \frac 1{1-1/(2e)}< \frac {5} 4.$$
Therefore, for the above results, we can rewrite \eqref{Bronwalleqs00200} as
 \begin{equation}\label{GmC}
 \begin{split}
 G_m  \le \frac {5} 4 T\Big(\frac 1 2+ 4 |\lambda| \Big) + CT \big(1+(T C_u)^{\alpha-1}\big) \tau |\! \ln\tau |^2
  +C (C_u^{\alpha-1} T^{\alpha} + {  (\ln  ({\Lambda}_\infty+1) )^2 }) \tau,
  \end{split}
 \end{equation}
where $C_u$ is nonlocal and depend on the norms of $u$ with regularity given in \eqref{ucnA}.
Then we deduce from \eqref{c123}-\eqref{GmC} that, if $\frac 1 2\le \alpha<1,\, 0<\tau \le  \delta_\alpha$, there is
 \begin{equation}\label{Gmbnd2}
 \begin{split}
\|e^{m+1}\|\le &\sqrt{c_1(1 +\alpha^{-1}({\rm exp}(G_m)-1))}\\
\le & C_u   (\tau + {h^2 } )\,  {\rm exp}\big(CT\{1+(C_{u}T)^{\alpha-1}\tau | \! \ln \tau|^2 \} + (C_u + C{ (\ln  ({\Lambda}_\infty+1) )^2 } ) \tau \big).
\end{split}
\end{equation}
Noting that $\delta_\alpha={\rm exp}(\alpha/(\alpha-1))$ is decreasing in $\alpha$ and $\delta_\alpha\to 0$ as $\alpha\to 1^{-}$,   we then choose $\alpha=1/2$ so that $\delta_\alpha$ has the maximum value $e^{-1}.$ Moreover, as the function $\tau|\! \ln \tau |^2 $  is monotonically  increasing on $[0, \delta_\alpha] $, this implies
$$
0\le \tau|\! \ln \tau |^2\le \delta_\alpha |\! \ln \delta_\alpha |^2\le \delta_{1/2} |\! \ln \delta_{1/2}|^2=e^{-1}.
 $$
Besides, according to the assumption of induction, we obtain
  $$
  { \ln  (\Lambda_\infty+1) \le C_u + \ln (1 + \|U^{n} \|_{\infty}). }
  $$
Thus, we can conclude from \eqref{Gmbnd2} that
 \begin{equation}\label{Gmbnd20}
\|e^{m+1}\| \le  C_{u}  (\tau + h^2).
\end{equation}
Rewriting \eqref{erroreqnA} as follows
\begin{equation}\label{erroreqnAre}
\delta^2_\nabla e^{m+1}_{j,k}=-\ri\delta^+_te^m_{j,k}+\xi^{m}_{j,k}+\lambda\mathcal{N}_{j,k}^{m},
\end{equation}
and taking the discrete $L^2$ norm on both sides of \eqref{erroreqnAre},
we obtain
\begin{equation}\label{eH2}
\begin{split}
|e^{m+1}|_2&\le \tau^{-1}(\|e^m\|+\|e^{m+1}\|)+\|\xi^{m}\|+\lambda\|\mathcal{N}^{m}\|\\
&\le \tau^{-1}(\|e^m\|+\|e^{m+1}\|) +{ {2} |\Omega|^{(1-\alpha)/2} }|\lambda| {\mathcal H}_\alpha(\epsilon) \, \|e^m\|^{\alpha} \\
&\quad + 2 |\lambda|\Upsilon(\epsilon,{ {\Lambda}_\infty}) \, \|e^m\|+ C_u (\tau+h^2),
\end{split}
\end{equation}
then taking $\epsilon=\tau$ in  ${\mathcal H}_\alpha(\epsilon)$ and $\Upsilon(\epsilon,{ {\Lambda}_\infty})$, we find from \eqref{UptauA00}  that 
 for $0<\tau  \le \delta_\alpha$, there is
 \begin{equation}\label{UptauA00-1}
 \begin{split}
& {\mathcal H}_\alpha(\tau)\le (2\tau)^{1-\alpha} (|\!\ln \tau|+1),\;\;
  \Upsilon(\tau, {{\Lambda}_\infty})\le \sqrt{2} \big\{ | \! \ln  \tau|+ (\ln({\Lambda_\infty+1}) +1) \big\}.
\end{split}
 \end{equation}
Then \eqref{eH2} can be bounded as
\begin{equation}\label{eH2s}
\begin{split}
|e^{m+1}|_{2}&\le \tau^{-1}(\|e^m\|+\|e^{m+1}\|) +{ {2}^{2-\alpha}\tau^{2-\alpha} (|\!\ln \tau|+1) |\Omega|^{1-\alpha} }|\lambda|\, \|e^m\|^{\alpha} \\
&\quad + 2\sqrt{2} |\lambda|\big\{ | \! \ln  \tau|+ (\ln({\Lambda_\infty+1}) +1) \big\} \, \|e^m\|+ C_u (\tau+h^2)\\
&\le C_u \tau^{-1}(\tau+h^2).
\end{split}
\end{equation}
By Lemma \ref{uinflemma} and \eqref{eH2s}, we obtain
 \begin{equation}\label{einf-1}
 \begin{split}
\|e^{m+1}\|_{\infty} &\le C\|e^{m+1}\|^{\frac{1}{2}}(|e^{m+1}|_2+\|e^{m+1}\|)^{\frac{1}{2}}\\
&\le C_u (\tau^{-1}+1)^{\frac{1}{2}}(\tau+h^2)\le C_u(\tau^{\frac{1}{2}}+\tau^{-\frac{1}{2}}h^2).
\end{split}
\end{equation}
On the other hand, noticing $|e^{m+1}|_{2}\le{4}{h_{1}^{-1}h_{2}^{-1}}||e^{m+1}||$ and using Lemma \ref{uinflemma}, we have
 \begin{equation}\label{einf-2}
 \begin{split}
\|e^{m+1}\|_{\infty} &\le C\|e^{m+1}\|^{\frac{1}{2}}(|e^{m+1}|_2+\|u^{m+1}\|)^{\frac{1}{2}}\\
&\le C_u (h_{1}^{-\frac{1}{2}}h_{2}^{-\frac{1}{2}}+1)(\tau+h^2)\le C_u (h^{-1}\tau+h),
\end{split}
\end{equation}where $h\lesssim{h}_{min}$ was used.
Combining \eqref{einf-1} and \eqref{einf-2} gives that, without any constraint on the grid ratio, when $\tau$ and $h$ are sufficiently small, we always have $\|e^{m+1}\|_{\infty} \le {1}.$
This together with triangle inequality gives that
\begin{equation}\label{umbnde00}
\begin{split}
\|u^{m+1}\|_\infty  & \le  \|U^{m+1} \|_{\infty} + \|e^{m+1}\|_{\infty} \le 1+ \|U^{m+1} \|_{\infty}.
\end{split}
\end{equation}

{\bf Step 2. (Estimate of $|e^{n}|_{1}$ for the BDF1 scheme)} In this section, we try to establish the error estimate in the discrete $H^{1}$ norm. To do this, we fix $n=m$ in \eqref{erroreqnA},  and compute the inner product of the error equation  \eqref{erroreqnA} with  ${e}^{n+1}$,  then take the real part  to obtain
\begin{align}\label{realpartA-H1}
&|{e}^{n+1}|_{1}^2=\frac{1}{\tau}{\rm{Im}}(e^{n+1},e^{n})_{h}-\lambda {\rm{Re}}(\mathcal{N}^{n}, {e}^{n+1})_{h}
- {\rm{Re}} (\xi^{n} , {e}^{n+1})_{h}, \quad {0}\le{n}<N.
\end{align}
By using the Cauchy-Schwarz inequality and \eqref{ful2est}, \eqref{errbndiefd-1}, \eqref{tnest} \eqref{UptauA00-1},and \eqref{umbnde00}, we obtain
\begin{align}\label{realpartA-H1-2}
|{e}^{n+1}|_{1}^{2}\le&\frac{1}{2\tau}(||e^{n+1}||^{2}+||e^{n}||^{2})+\frac{|\lambda|}{2} (||\mathcal{N}^{n}||^{2}+||{e}^{n+1}||^{2})
+\frac{1}{2}(||\xi^{n}||^{2}+||{e}^{n+1}||^{2})\nonumber\\
\le&C_{u}(\tau^{-1}+1)(\tau+h^{2})^{2}+C_{u} (|\ln \tau|+1)^{2}(\tau+h^{2})^{2}\nonumber\\
\le&{C}_{u}\tau^{-1}(\tau+h^{2})^{2}, \quad  {n}=0,1,2,\ldots,N-1,
\end{align}
which immediately gives \eqref{errbndiefd-2}. And then the proof of Theorem \ref{enzhao0} is completed.
\end{proof}



\subsection{Proof of Theorem \ref{enzhao3} on the BDF2 scheme \eqref{lognlsbdf2}}\label{lognlsbdf2-sec}
The estimate of the local truncation error of the BDF2 scheme will be needed below, when we prove Theorem  \ref{enzhao3}.
\begin{lemma}\label{Tnorder-bdf2}
Under the assumption of the exact solution given in \eqref{ucnB},
the local truncation error $\widetilde{\xi}^{n}\in{X}_{h}^{0}$ of the BDF2 scheme \eqref{lognlsbdf2} is defined as
 \begin{subequations}\label{trunT-bdf2}
  \begin{align}
&\widetilde{\xi}^{n}_{j,k}:=\ri {D}_{t}^{-}{U}^{n+1}_{j,k}+\delta^2_{\nabla} {U}^{n+1}_{j,k}-2\lambda f(2U^{n}_{j,k}-U^{n-1}_{j,k}),
 \quad (j,k)\in\mathcal{T}_{h},\;\; {1}\leq{n}<{N},\label{lte-bdf2-1}\\
 &\widetilde{\xi}^{0}_{j,k}=\ri\delta^{-}_tU_{j,k}^{1}+\delta_{\nabla}^2 U_{j,k}^{1}-2\lambda f(U_{j,k}^{0}),\quad (j,k)\in\mathcal{T}_{h},\label{lte-bdf2-2}
\end{align}
\end{subequations}
then  we have
\begin{equation}\label{lte-bdf2-3}
\|\widetilde{\xi}^{0}\|\le C_u(\tau+h^2),\quad
\|\widetilde{\xi}^{n}\|\le C_u(\tau^{2}|\ln(\tau)|+h^2),\quad n=1,2,\ldots,N.
\end{equation}
\end{lemma}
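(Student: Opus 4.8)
I would treat the two pieces of $\widetilde{\xi}^{n}$ separately, since the genuine difficulty sits only in the nonlinear term of \eqref{lte-bdf2-1}.

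\textbf{The startup truncation error $\widetilde{\xi}^{0}$.} Since $\delta^{-}_{t}U^{1}_{j,k}=\delta^{+}_{t}U^{0}_{j,k}$ by the definition of the operators, comparing \eqref{lte-bdf2-2} with \eqref{trunT} shows that $\widetilde{\xi}^{0}$ is literally the BDF1 truncation error $\xi^{0}$. Hence the bound $\|\widetilde{\xi}^{0}\|\le C_{u}(\tau+h^{2})$ is nothing but Lemma \ref{TnorderIEFD} evaluated at $n=0$, and no new work is needed.

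\textbf{Splitting $\widetilde{\xi}^{n}$ for $n\ge 1$.} Using \eqref{lognls0B-2d} at $t=t_{n+1}$ to rewrite $2\lambda f(U^{n+1}_{j,k})=\ri\partial_{t}u(x_{j},y_{k},t_{n+1})+\Delta u(x_{j},y_{k},t_{n+1})$ and substituting this into \eqref{lte-bdf2-1}, I would decompose $\widetilde{\xi}^{n}=A^{n}+B^{n}+2\lambda C^{n}$, where $A^{n}_{j,k}=\ri\big(D_{t}^{-}U^{n+1}_{j,k}-\partial_{t}u(x_{j},y_{k},t_{n+1})\big)$ is the pure BDF2 quadrature error, $B^{n}_{j,k}=\delta^{2}_{\nabla}U^{n+1}_{j,k}-\Delta u(x_{j},y_{k},t_{n+1})$ is the central-difference error, and $C^{n}_{j,k}=f(U^{n+1}_{j,k})-f(W^{n}_{j,k})$ with $W^{n}:=2U^{n}-U^{n-1}$ carries the logarithm. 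A Taylor expansion with integral remainder gives $\|A^{n}\|_{\infty}\le C_{u}\tau^{2}$, which needs $u\in C^{3}([0,T];L^{\infty}(\Omega))$ and is exactly the reason \eqref{ucnB} strengthens \eqref{ucnA}; the standard fourth-order consistency of $\delta^{2}_{\nabla}$ gives $\|B^{n}\|\le C_{u}h^{2}$ under $u\in C^{0}([0,T];W^{4,\infty}(\Omega))$. Both of these are routine.

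\textbf{The nonlinear term $C^{n}$ (the crux).} First, a Taylor expansion of $t\mapsto u(x_{j},y_{k},t)$ about $t_{n}$ (needing only $u\in C^{2}$ in time) yields the linear-extrapolation estimate $\|U^{n+1}-W^{n}\|_{\infty}\le C_{u}\tau^{2}$, hence $\|U^{n+1}-W^{n}\|\le C_{u}\tau^{2}$. Then I would apply Lemma \ref{N2L2} with $u\mapsto U^{n+1}$, $v\mapsto W^{n}$, and $\Lambda_{\infty}:=\max\{\|U^{n+1}\|_{\infty},\|W^{n}\|_{\infty}\}\le 3\|u\|_{C^{0}([0,T];L^{\infty}(\Omega))}$, fixing $\alpha=1/2$ and — this is the decisive choice — the $\tau$-dependent regularization level $\epsilon=\tau^{2}$ (admissible once $\tau^{2}\le\delta_{1/2}=e^{-1}$). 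For $\epsilon=\tau^{2}$ one has $\mathcal{H}_{\alpha}(\tau^{2})\lesssim\tau^{2(1-\alpha)}|\!\ln\tau|$ and $\Upsilon(\tau^{2},\Lambda_{\infty})\lesssim|\!\ln\tau|+C_{u}$, so that
\[
\|C^{n}\|^{2}\le \mathcal{H}_{\alpha}^{2}(\tau^{2})\,|\Omega|^{1-\alpha}\,\|U^{n+1}-W^{n}\|^{2\alpha}+\Upsilon^{2}(\tau^{2},\Lambda_{\infty})\,\|U^{n+1}-W^{n}\|^{2}.
\]
Since $\|U^{n+1}-W^{n}\|\le C_{u}\tau^{2}$, the first summand is $\lesssim \tau^{4(1-\alpha)}|\!\ln\tau|^{2}\cdot\tau^{4\alpha}=\tau^{4}|\!\ln\tau|^{2}$ — the $\tau$-powers balancing independently of $\alpha$ — and the second is $\lesssim\tau^{4}|\!\ln\tau|^{2}$ as well, whence $\|C^{n}\|\le C_{u}\tau^{2}|\!\ln\tau|$. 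Assembling the three pieces gives $\|\widetilde{\xi}^{n}\|\le C_{u}(\tau^{2}|\!\ln\tau|+h^{2})$ for $n\ge 1$. The main obstacle is precisely this last estimate: because $f$ is merely $\alpha$-H\"older, a fixed $\epsilon$ would leave an $O(\tau^{2\alpha})$ remainder, worse than $O(\tau^{2})$, whereas letting $\epsilon=\tau^{2}$ shrink with $\tau$ trades the H\"older loss for the logarithmic factor $|\!\ln\tau|$ — which is, by this argument, unavoidable and is exactly the $|\!\ln\tau|$ in the statement.
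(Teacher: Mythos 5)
Your proposal is correct, and its skeleton coincides with the paper's: the same identification of $\widetilde{\xi}^{0}$ with the BDF1 truncation error $\xi^{0}$, the same three-way decomposition into the BDF2 quadrature error, the central-difference error, and the nonlinear extrapolation error, the same Taylor-expansion bound $\|U^{n+1}-2U^{n}+U^{n-1}\|_{\infty}\le C_{u}\tau^{2}$, and the same $\tau$-dependent regularization level $\varepsilon=\tau^{2}$. The one place where you genuinely diverge is the tool used on the crux term $f(U^{n+1})-f(2U^{n}-U^{n-1})$: the paper works pointwise with the regularized nonlinearity $f_{\varepsilon}(z)=z\ln(\varepsilon+|z|)$, inserting $\pm f_{\varepsilon}$ and invoking the two Paraschis--Zouraris lemmas ($\sup_{z}|f(z)-f_{\varepsilon}(z)|\le\varepsilon$ and the $2|\ln\varepsilon|$-Lipschitz bound of $f_{\varepsilon}$ on bounded sets) to get $|f(W^{n})-f(U^{n+1})|\le 4\varepsilon+2|\ln\varepsilon|\,|U^{n+1}-W^{n}|\le C(\tau^{2}+\tau^{2}|\ln\tau|)$ directly, with no H\"older exponent at all; you instead run the difference through the discrete-$L^{2}$ splitting of Lemma \ref{N2L2} with $\alpha=1/2$ and check that the powers of $\tau$ in $\mathcal{H}_{\alpha}^{2}(\tau^{2})\|U^{n+1}-W^{n}\|^{2\alpha}$ balance to $\tau^{4}|\ln\tau|^{2}$ independently of $\alpha$. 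Both routes land on $C_{u}\tau^{2}|\ln\tau|$; the paper's is slightly more economical (a pointwise bound needing no choice of $\alpha$ and no case distinction on $\Lambda_{\infty}$ versus $\epsilon$), while yours reuses the machinery already required for the stability part of the error analysis. Two small housekeeping points if you write yours up: Lemma \ref{N2L2}(i) applies only when $\Lambda_{\infty}>\epsilon$, so you should add the one-line remark that case (ii) gives the same $\tau^{2}|\ln\tau|$ bound when $\Lambda_{\infty}\le\tau^{2}$; and the first term on the right of \eqref{ful2est} should be read as $\|u-v\|^{2\alpha}$ (as its proof shows), which is how you correctly used it.
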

(See Appendix B for the proof of Lemma \ref{Tnorder-bdf2})


Now we  give a rigorous proof of the error estimate results stated in Theorem  \ref{enzhao3}.
\begin{proof} [Proof of Theorem \ref{enzhao3}]
When $n=0$, we have $e_{j,k}^{0}=0$ for $(j,k)\in\mathcal{T}_{h}^{0}$. In order to establish the  estimate of $e^{1}$, we subtract  \eqref{lognlsLEFD-2} from  \eqref{lte-bdf2-2} to get the following error equation:
\begin{equation}\label{e1}
\frac{\ri}{\tau}e_{j,k}^{1}+\delta_{\nabla}^2 e_{j,k}^{1}=\widetilde{\xi}^{0}_{j,k}, \;\; (j,k)\in\mathcal{T}_{h}.
\end{equation}
Computing the inner product of \eqref{e1} with $e^{1}$ and taking the imaginary part, we obtain
\begin{equation*}
\frac{1}{\tau}\|e^{1}\|^2={\rm{Im}}(\widetilde{\xi}^{0},e^{1})_{h}\le \frac{\tau}{2}\|\widetilde{\xi}^{0}\|^2+\frac{1}{2\tau}\|e^{1}\|^2,
\end{equation*}
which implies
\begin{equation}\label{bdf2-err-bf-0}
\|e^{1}\|\le C_{u}{\tau} ({\tau}+{h}^{2})\le  C_{u}({\tau}^{2}|\ln(\tau)|+{h}^{2}).
\end{equation}
Computing the inner product of \eqref{e1} with $-e^{1}$ and taking the real part, we get
\begin{equation*}
|e^{1}|_1^2=-{\rm{Re}}(\widetilde{\xi}^{0},e^{1})_{h}\leq||\widetilde{\xi}^{0}||\cdot||e^{1}||\le C_{u}\tau({\tau}+{h}^{2})^{2}\le C_{u}\tau^{-1}({\tau}^{2}+{h}^{2})^{2},
\end{equation*}
which gives
\begin{align}\label{est-lev-1-1}
|e^{1}|_{1}\le C_{u}\tau^{-\frac{1}{2}}(\tau^2+h^2)
\end{align}
On the other hand, by using the inverse inequality $|e^{1}|_{1}\le {C}h^{-1}||e^{1}||$, we have
\begin{align}\label{est-lev-1-2}
|e^{1}|_{1} \le C_{u}{h}^{-1}(\tau^2+h^2).
\end{align}
From \eqref{est-lev-1-1} and \eqref{est-lev-1-2}, we know that for sufficiently small $h$ and $\tau$ we always have
\begin{align}\label{est-lev-1-3}
|e^{1}|_{1} \le {1}.
\end{align}
Therefore, the error estimate results given in Theorem \ref{enzhao3} are true for $n=0, 1$.

Next we use the mathematical induction to establish the error estimate in the discrete $L^{2}$ norm for $n>{1}$ and prove the boundedness of the error function in the discrete $H^{1}$ norm. By
subtracting \eqref{lognlsLEFD-1} from \eqref{lte-bdf2-1}, we have the following error equation:
\begin{equation}\label{erroreqnAsifd}
\ri{D}_{t}^{-}e^{n+1}_{j,k}+\delta_{\nabla}^2 e^{n+1}_{j,k}=\widetilde{\xi}^{n}_{j,k}+\lambda\widetilde{\mathcal{N}}_{j,k}^{n},
\quad {(j,k)}\in\mathcal{T}_{h},\;\; n=1,2,\dots,N,
\end{equation}
where $\widetilde{\mathcal{N}}^{n}\in X_{h}^{0}$ with
\begin{equation}\label{eheh0sifd}
\widetilde{ \mathcal{N}}_{j,k}^{n}:=2f(2U_{j,k}^{n}-U_{j,k}^{n-1})-2f(2u_{j,k}^{n}-u_{j,k}^{n-1}),\quad {(j,k)}\in\mathcal{T}_{h}^{0},\;\; n=1,2,\dots,N.\nonumber
\end{equation}
We assume that
 \begin{align}\label{bound-h1-sifd-pf}
 & \|e^{n}\|\le  C_{u}({\tau}^{2}|\ln(\tau)| + {h}^{2}),\quad   {0}\le{n}\le{m}
 \end{align}
 and
  \begin{align}\label{bound-h1-sifd}
 &|e^{n}|_{1}\le{C}_{u}\tau^{-\frac{1}{2}}(\tau^2|\ln(\tau)|+h^2),\quad
 |e^{n}|_{1} \le {1},\quad   {0}<n\le{m}
 \end{align}
 hold for $m\le{N}-1$ and next we will show that \eqref{bound-h1-sifd-pf} and \eqref{bound-h1-sifd} still valid for $n=m+1$.

Computing the inner product of \eqref{erroreqnAsifd} with ${4\tau}{e}^{n+1}$ and taking the imaginary part, we have
 \begin{equation}\label{imagepartAsifd}
 \begin{split}
&  \| e^{n+1} \|^{2} + \|2e^{n+1}-e^{n}\|^{2} - \| e^{n} \|^{2} - \|2e^{n}-e^{n-1}\|^{2} + \| e^{n+1}+2e^{n+1}-e^{n}\|^{2}  \\
=\;& {4\lambda\tau}{\rm{Im}}(\mathcal{\widetilde{N}}^{n},2{e}^{n}-{e}^{n-1})_{h}
  + {4\lambda\tau}{\rm{Im}}(\mathcal{\widetilde{N}}^{n},{e}^{n+1}-2{e}^{n}+{e}^{n-1})_{h}
 + {4\tau}{\rm{Im}}(\widetilde{\xi}^{n},{e}^{n+1})_{h}.
 \end{split}
 \end{equation}
For the first term on the right hand side of  \ref{imagepartAsifd}, by using Lemma \eqref{LipSC}, we have the following estimate:
\begin{align}\label{bdf2-err-bf-1}
&{4\lambda\tau}{\rm{Im}}(\mathcal{\widetilde{N}}^{n},2{e}^{n}-{e}^{n-1})_{h}
\leq {4}{\tau}|\lambda| \|2{e}^{n}-{e}^{n-1}\|^{2}.
\end{align}
For the second term on the right hand side of  \ref{imagepartAsifd}, by using Cauchy-Schwarz inequality, we have the following estimate:
\begin{align}\label{bdf2-err-bf-2}
&{4\lambda\tau}{\rm{Im}}(\mathcal{\widetilde{N}}^{n},{e}^{n+1}-2{e}^{n}+{e}^{n-1})_{h}
\leq  |\lambda|{\tau}^{2}\|\widetilde{N}^{n}\|^{2} +\|{e}^{n+1}-2{e}^{n}+{e}^{n-1}\|^{2}.
\end{align}
By using Lemma \ref{LipSC-1eq-2} and taking $\varepsilon=\tau^{2}$, we have
\begin{align}\label{bdf2-err-bf-3}
& \|\widetilde{N}^{n}\| \leq C_{u}(\tau^{2} + |\ln(\tau)| \|2{e}^{n}-{e}^{n-1}\| ),
\end{align}
this together with \ref{bdf2-err-bf-2} gives
\begin{align}\label{bdf2-err-bf-4}
&{4\lambda\tau}{\rm{Im}}(\mathcal{\widetilde{N}}^{n},{e}^{n+1}-2{e}^{n}+{e}^{n-1})_{h}\nonumber\\
\leq\;  & C_{u}{\tau}(\tau^{4} + {\tau}|\ln(\tau)|^{2} \|2{e}^{n}-{e}^{n-1}\|^{2} )
  +\|{e}^{n+1}-2{e}^{n}+{e}^{n-1}\|^{2}\nonumber\\
\leq\;  & C_{u}{\tau}(\tau^{4} +  \|2{e}^{n}-{e}^{n-1}\|^{2} )
  +\|{e}^{n+1}-2{e}^{n}+{e}^{n-1}\|^{2},
\end{align}
where ${\tau}|\ln(\tau)|^{2}\leq{4e^{-2}}$ $(\tau<1)$ was used.
For the third term on the right hand side of  \ref{imagepartAsifd}, by using Cauchy-Schwarz inequality, we have the following estimate:
\begin{align}\label{bdf2-err-bf-5}
&{4\tau}{\rm{Im}}(\widetilde{\xi}^{n},{e}^{n+1})_{h} \leq  {4\tau}\|\widetilde{\xi}^{n}\|^{2} + {\tau}\|{e}^{n+1}\|^{2}.
\end{align}
Substituting \eqref{bdf2-err-bf-1}, \eqref{bdf2-err-bf-3}, \eqref{bdf2-err-bf-4} into \eqref{imagepartAsifd} gives
 \begin{equation} \label{bdf2-err-bf-5}
 \begin{split}
&  \| e^{n+1} \|^{2} + \|2e^{n+1}-e^{n}\|^{2} - \| e^{n} \|^{2} - \|2e^{n}-e^{n-1}\|^{2}   \\
\leq\;  & C_{u}{\tau}(\tau^{4} + \|{e}^{n}\|^{2} + \|2{e}^{n}-{e}^{n-1}\|^{2}) + {\tau}\|{e}^{n+1}\|^{2} + {2\tau} \|\widetilde{\xi}^{n}\|^{2}.
 \end{split}
 \end{equation}
  Summing \eqref{bdf2-err-bf-5} for $n=1,2,\ldots,m$, we have
 \begin{equation}\label{bdf2-err-bf-6}
 \begin{split}
 &\| e^{m+1} \|^{2} + \|2e^{m+1}-e^{m}\|^{2}\nonumber\\
 \le\; & {5}\| e^{1} \|^2+\tau \| e^{m+1} \|^2+{C}_{u}\tau \sum_{n=1}^{m}\left(\tau^{4} + \|{e}^{n}\|^{2} + \|2{e}^{n}-{e}^{n-1}\|^{2}\right)
  +{4}\tau \sum_{n=1}^{m}\|\widetilde{\xi}^{n}\|^{2}.
 \end{split}
 \end{equation}
 Hence, if we set $\tau\leq{1/2}$ and use \eqref{lte-bdf2-3} and \eqref{bdf2-err-bf-0}, we have
  \begin{equation}\label{bdf2-err-bf-6}
 \begin{split}
 &\| e^{m+1} \|^{2} + \|2e^{m+1}-e^{m}\|^{2}\nonumber\\
 \le\; &  {C}_{u}\tau \sum_{n=1}^{m}\left(\|{e}^{n}\|^{2} + \|2{e}^{n}-{e}^{n-1}\|^{2}\right)
  +C_{u}(\tau^{2}|\ln(\tau)|+h^{2})^{2}.
 \end{split}
 \end{equation}
  Then we use Gr\"{o}nwall inequality to obtain that
\begin{equation}\label{bdf2-err-bf-8}
\| e^{m+1} \| \le {C_{u}}({\tau}^{2}|\ln(\tau)|+h^2).
\end{equation}

Next, we try to estimate $|e^{m+1}|_1$. To do this, we set $n=m$ in \eqref{erroreqnAsifd} and compute the inner product of \eqref{erroreqnAsifd} with $-e^{m+1}$, then take the real part of the result to obtain
 \begin{equation*}\label{realpartAsifd}
 \begin{split}
 &{\rm{Im}}(D_{t}^{-}e^{m+1},{e}^{m+1})  +  | e^{m+1} |_{1}^{2}
 = \lambda {\rm{Re}}(\mathcal{\widetilde{N}}^{m},{e}^{m+1})+ {\rm{Re}}(\widetilde{\xi}^{n},{e}^{m+1}),
 \end{split}
 \end{equation*}
 which implies
  \begin{equation}\label{bdf2-err-bf-9}
  \begin{split}
  | e^{m+1} |_{1}^{2}  &\le \frac{3}{\tau} (\|{e}^{m+1}\|^2+\|{e}^{m}\|^2+\|{e}^{m-1}\|^2)
   + {C_{u}}({\tau}^{2}|\ln(\tau)|+h^2)^{2} \\
   &\leq {C_{u}}{\tau}^{-1}({\tau}^{2}|\ln(\tau)|+h^2)^{2},
   \end{split}
    \end{equation}
where  \eqref{lte-bdf2-3}, \eqref{bdf2-err-bf-3}, \eqref{bound-h1-sifd-pf} and \eqref{bdf2-err-bf-8}  were used. It follows from \eqref{bdf2-err-bf-9} that
  \begin{equation}\label{bdf2-err-bf-10}
  | e^{m+1} |_{1}
 \leq {C_{u}}{\tau}^{-\frac{1}{2}}({\tau}^{2}|\ln(\tau)|+h^2).
    \end{equation}
The above inequality implies that when  $h\le \tau$ there holds
\begin{equation*}
| e^{m+1}|_{1}\le C_{u} \tau^{-\frac{1}{2}}|\ln\tau|(\tau^2+h^2)\le C_{u} (\tau^{\frac{3}{2}}|\ln(\tau)|+h^{\frac{3}{2}}).
\end{equation*}
On the other hand, when $\tau\le h$, we obtain from the inverse inequality $| e^{m+1}|_{1}\le{2}h^{-1}\| e^{m+1}\|$ that
\begin{equation*}
| e^{m+1}|_{1} \le C_{u} {h}^{-1}(\tau^{2}|\ln(\tau)|+h^{2})\le C_{u}(\tau|\ln(\tau)|+h).
\end{equation*}
Thus, for sufficiently small $h$ and $\tau$,  we always have
\begin{equation*}
| e^{m+1}|_{1}\le {1}.
\end{equation*}
This ends the proof.
\end{proof}
\begin{remark}
The proofs of Theorems  {\rm \ref{enzhao0}}- {\rm \ref{enzhao3}} can be generalized to one- or
three-dimensional cases by utilizing the proposed numerical analysis techniques.
\end{remark}
\begin{remark}
We discretize the LogSE equation \eqref{lognls0B-2d} in space with the compact difference method and we can get the convergence order $O(h^4)$ in one, two or
three dimensions.
\end{remark}
\begin{remark}
The BDF1 \eqref{lognlsbdf1}, BDF2 \eqref{lognlsbdf2} schemes for  the LogSE \eqref{lognlss}  can be extended to solve the initial-boundary value problem of the LogSE with a potential,
\begin{equation}\label{lognlss}
\ri  \partial_t u(\boldsymbol{x},t)+ \Delta u(\boldsymbol{x},t) +  V(\boldsymbol{x})u(\boldsymbol{x},t)=\lambda u (\boldsymbol{x},t)\ln(|u(\boldsymbol{x},t)|^2), \quad   \boldsymbol{x}\in  \Omega, \;\; t>0,\\[2pt]
\end{equation}
where $V\in L^{\infty}(\Omega)$ is a known  real-valued function, and the
same convergence results with Theorems  {\rm \ref{enzhao0}}- {\rm \ref{enzhao3}} can be obtained.
\end{remark}
\begin{remark}
In addition to the provided proof for Theorem \eqref{enzhao0}, we can use a similar proof  procedure for Theorem \eqref{enzhao3} to prove Theorem \eqref{enzhao0}.
\end{remark}

\section{Numerical results}
In this section, we numerically confirm the theoretical convergence results given in Theorems {\rm \ref{enzhao0}}, \ref{enzhao3} and simulate the dynamics of solutions. Here, we fix $\lambda =-1,\, d = 2.$ For more efficient computation, we apply FFT Sine wave transformation to solve the difference equations at homogeneous Dirichlet boundary condition. In more detail, we define $f=(f_1,f_2,\ldots,f_{J-1})^{T}$, the centered
difference operator can be rewritten as
\begin{equation}
(D_x^2 f)_j=(Af)_j,\quad A=
\begin{bmatrix}
-\frac{2}{h^2}& \frac{1}{h^2} & &\\
\frac{1}{h^2} &-\frac{2}{h^2} & \frac{1}{h^2} & \\
& & \ldots &\frac{1}{h^2}\\
& &\frac{1}{h^2} & -\frac{2}{h^2}
\end{bmatrix}_{J-1,J-1},
\end{equation}
where $x\in (-1,1).$

In fact, $A$ has the following complete set of orthogonal eigenvectors:
\begin{equation}
\boldsymbol{v}_k=\begin{bmatrix}
\sin(k\pi x_1)\\
\sin(k\pi x_2)\\
\ldots\\
\sin(k\pi x_{J-1})
\end{bmatrix},
\quad 1\le k\le J-1.
\end{equation}
And $A$ has eigenvalues
\begin{equation}
\lambda_k=\frac{-4\sin\frac{k\pi h}{2}}{h^2},\quad 1\le k\le J-1.
\end{equation}
And then in the computational, we can apply FFT and inverse FFT.
\begin{remark}
The finite difference method has relatively high requirements on the regularity of solutions, so it has certain limitations when dealing with such low-regularity problems. In future work, we will focus on studying computational algorithms suitable for low-regularity initial values.
\end{remark}
\subsection{Accuracy test on exact Gausson solutions}
We first consider the LogSE \eqref{lognls0B} with the exact Gaussion solution \cite{Carles2018Universal}:
\begin{equation}\label{2dGex}
u(\boldsymbol{x},t)=\exp(2\ri \lambda \omega t+\omega+1+\lambda/2|\boldsymbol{x}|), \; \boldsymbol{x}\in \mathbb{R}^{2},\,t\geq 0,
\end{equation}
for any period $\omega \in \mathbb{R}$.

  Firstly, we test the convergence order of different FDTD schemes in two dimensions on the domain $\Omega=[-5,5]^2$. To demonstrate the convergence rate in time, we fix $h_x=h_y=0.001$ and take the time step as $\tau_j=0.1\times2^{-j},\,j=1,\ldots,4$ at $t=0.5$. Figure \ref{logsetimeorder} shows $\|e\|:=\max\limits_{1\le{n}\le{N}}\|e^{n}\|$ (left) and $|e|_{1}:=\max\limits_{1\le{n}\le{N}}|e^{n}|_{1}$ (right).

  Next, to examine the convergence order  in space, we fix $\tau=10^{-5}$ and vary the
spatial mesh size ${h_{x}}_j={h_{y}}_j=\frac{1}{4+2\times j},\,j=1,\ldots,5$ for the two schemes at $t=0.5$ with $\|e\|$ and $|e|_1$ (see Figure \ref{logsespaceorder}).
From Figures \ref{logsetimeorder} and \ref{logsespaceorder}, we conclude that (i) the BDF1 scheme converges to LogSE at ${\mathcal{O}(\tau+h^2)}$ in the discrete $L^2$-norm and $H^1$-norm which demonstrate Theorem \ref{enzhao0}; (ii) the BDF2 scheme converges quadratically ${\mathcal{O}(\tau^2+h^2)}$ in  the discrete $L^2$-norm and $H^1$-norm which confirm our error estimates in Theorem \ref{enzhao3}; (iii) From Figure \ref{logsetimeorder} (left), we can see the convergence orders of the BDF1 and BDF2 schemes are unconditional, optimal and almost optimal.

\begin{figure}[!h]
  \centering
   \subfigure{\includegraphics[height=1.7in,width=0.45\textwidth]{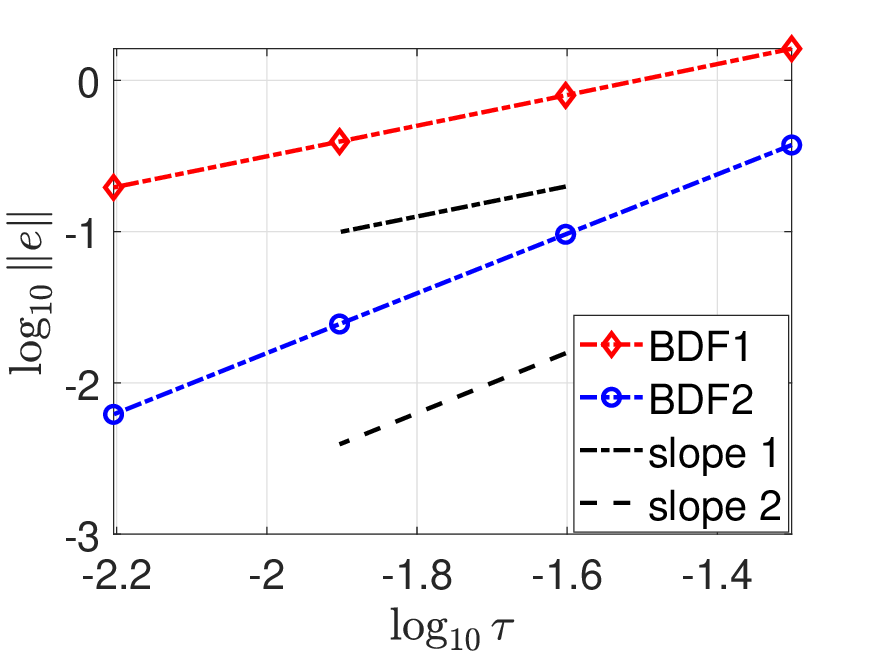}}
   \subfigure{\includegraphics[height=1.7in,width=0.45\textwidth]{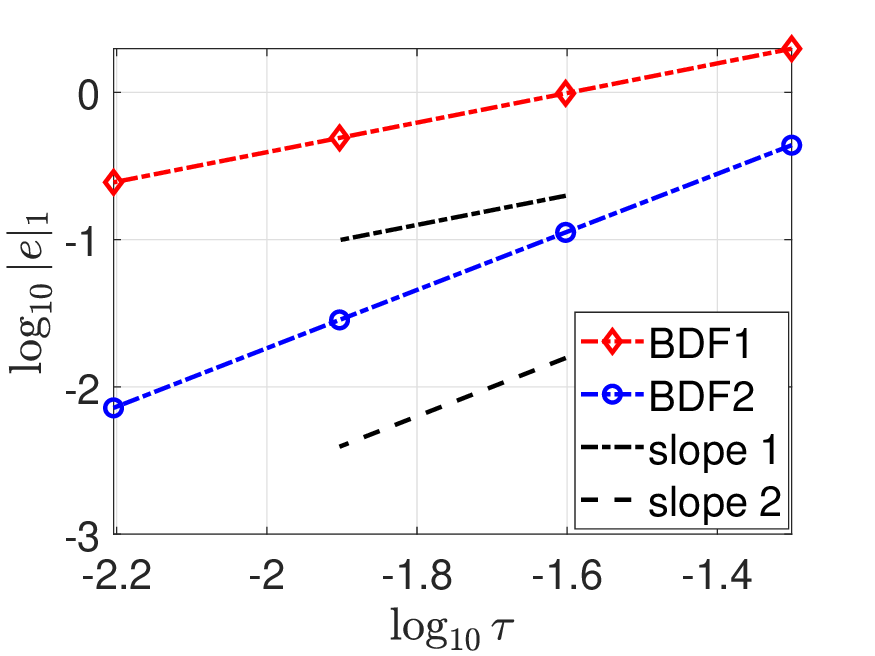}}
   \vspace{-0.05in}
    \caption{\small Convergence rates of the three schemes in time for the LogSE. }\label{logsetimeorder}
\end{figure}
\begin{figure}[!h]
  \centering
   \subfigure{\includegraphics[height=1.7in,width=0.45\textwidth]{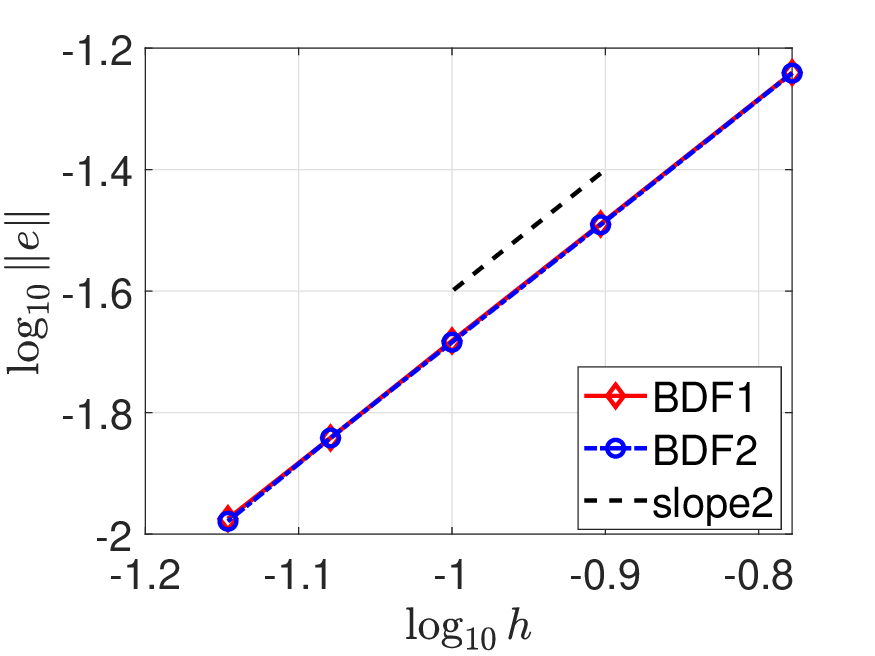}}
   \subfigure{\includegraphics[height=1.7in,width=0.45\textwidth]{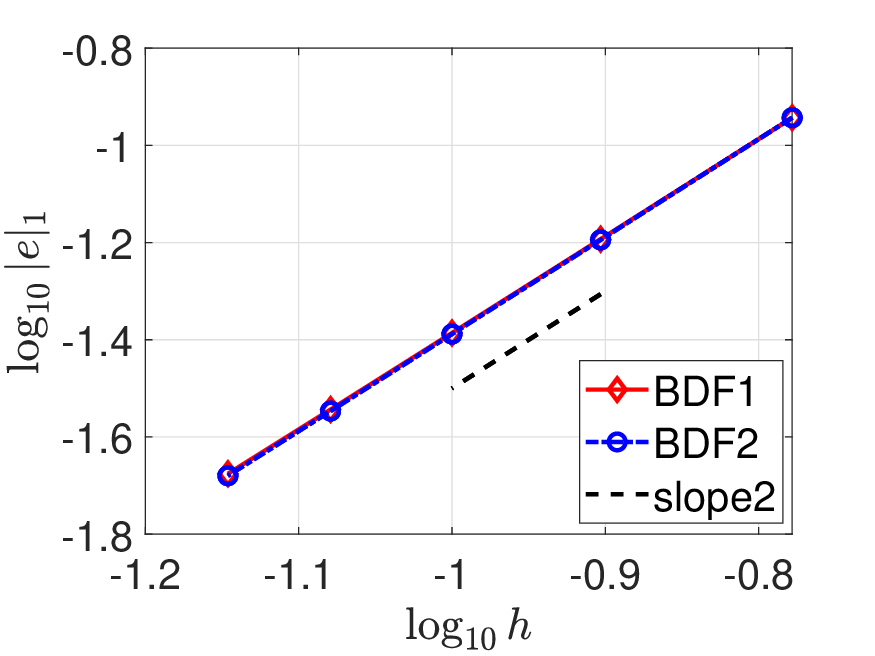}}
   \vspace{-0.05in}
    \caption{\small Convergence rates of the three schemes in space for the LogSE. }\label{logsespaceorder}
\end{figure}

\subsection{Application for the dynamics in 2D}
 In this section, we compare the dynamics of different initial solutions in the following examples.\\
 \textbf{Example 2.} Firstly, we present a single Gausson, a vortex pair and a vortex dipole in
 two dimensions (2D). Here, the initial data in the three cases are given as \cite{bao2022regularized}.\\
 Case I. A single Gausson, i.e.,
 \begin{equation*}
 u_0(x,y)=e^{-(x^2+y^2)};
 \end{equation*}
 Case II. A vortex pair, i.e.,
  \begin{equation*}
 u_0(x,y)=(x-0.5+\ri y)(x+0.5+\ri y)e^{-(x^2+y^2)};
 \end{equation*}
 Case III. A vortex pair, i.e.,
  \begin{equation*}
 u_0(x,y)=(x-0.5+\ri y)(x+0.5-\ri y)e^{-(x^2+y^2)}.
 \end{equation*}
We solve the problem by the BDF1 method \eqref{lognlsbdf1} with $\tau= 0.01, h_x =h_y =\frac{1}{32}$  and the computational domain $\Omega=[-8,8]^2$. The results of the BDF2 scheme is similar, and we omit them. We take $\lambda= -10$,
for Case I. $\lambda= 1$ for Case II and Case III.
 \begin{figure}[htbp]
  \centering
    \includegraphics[width=0.32\textwidth]{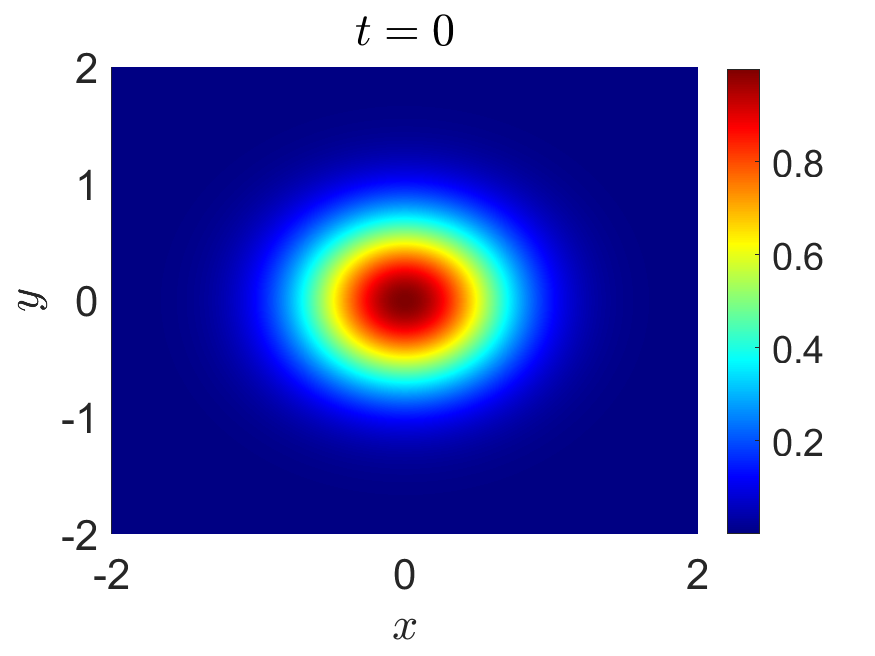}
    \includegraphics[width=0.32\textwidth]{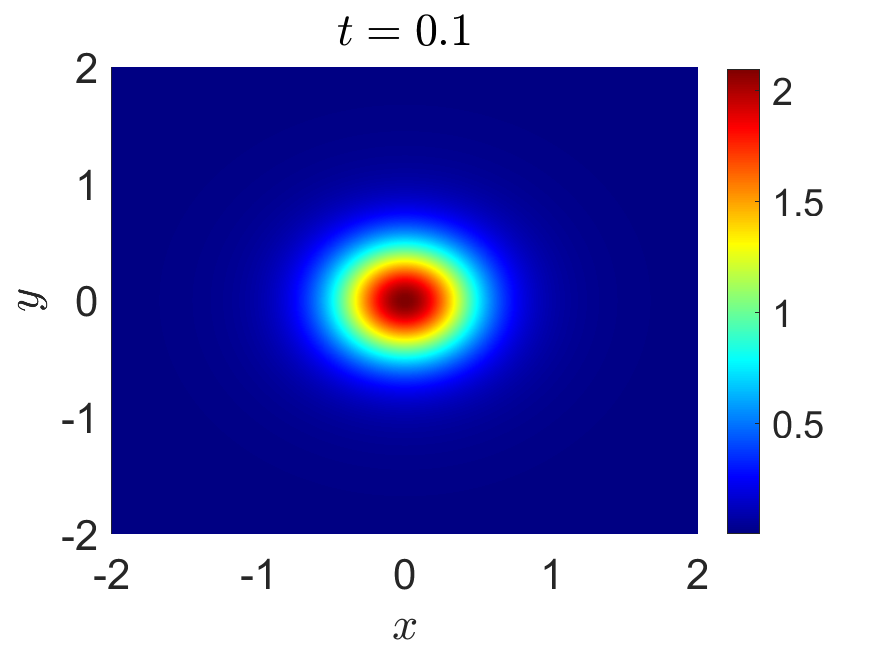}
    \includegraphics[width=0.32\textwidth]{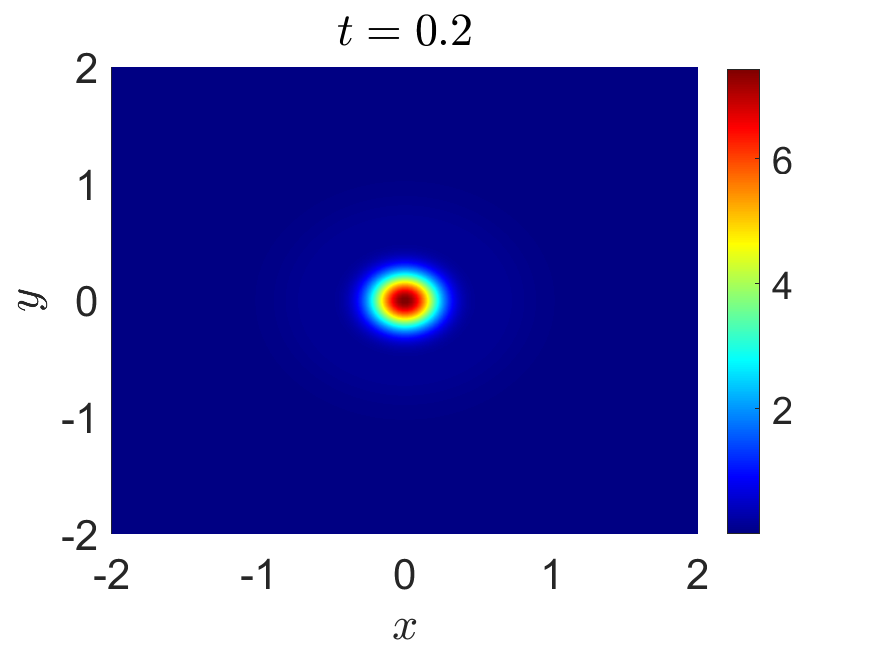}
    \includegraphics[width=0.32\textwidth]{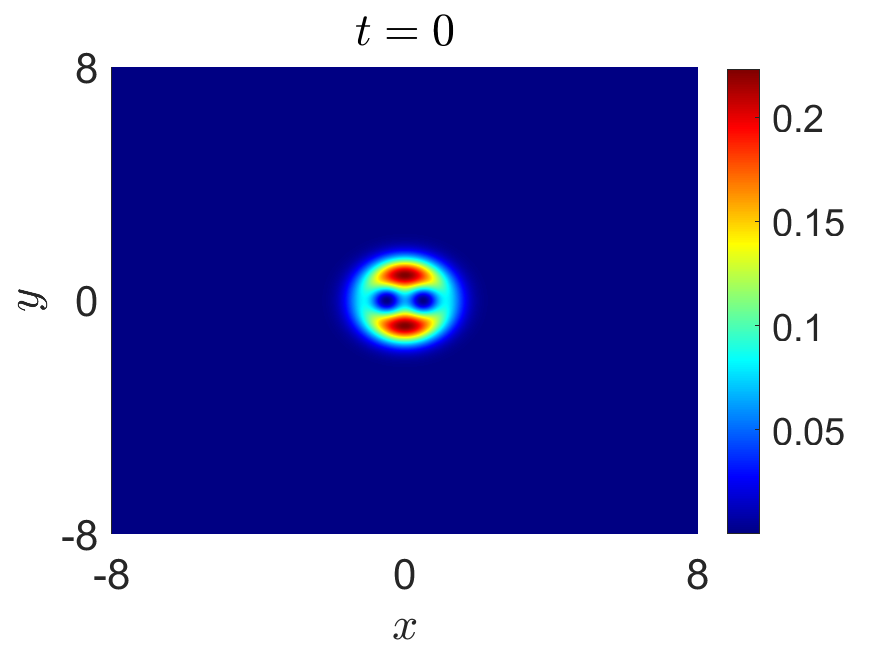}
    \includegraphics[width=0.32\textwidth]{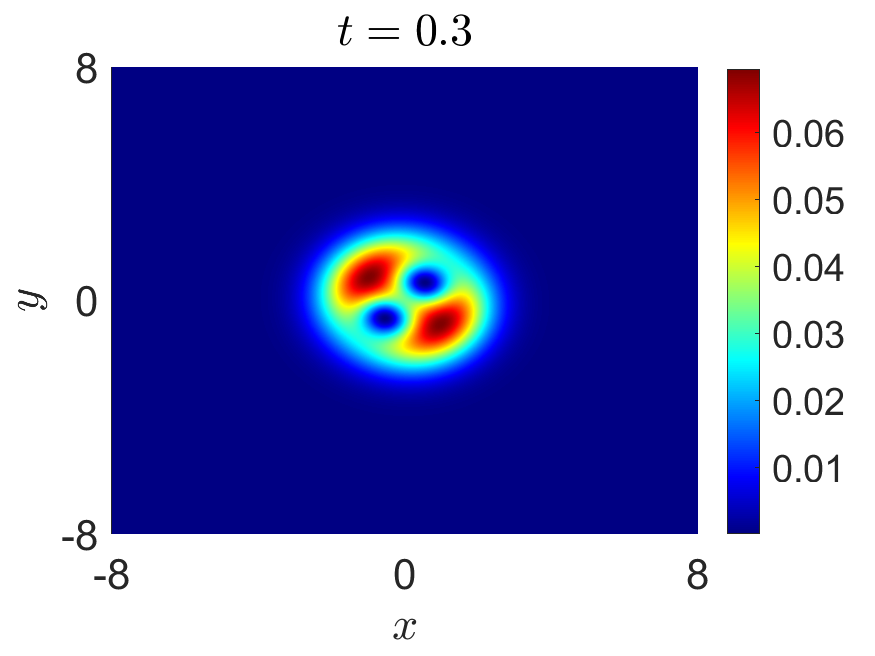}
    \includegraphics[width=0.32\textwidth]{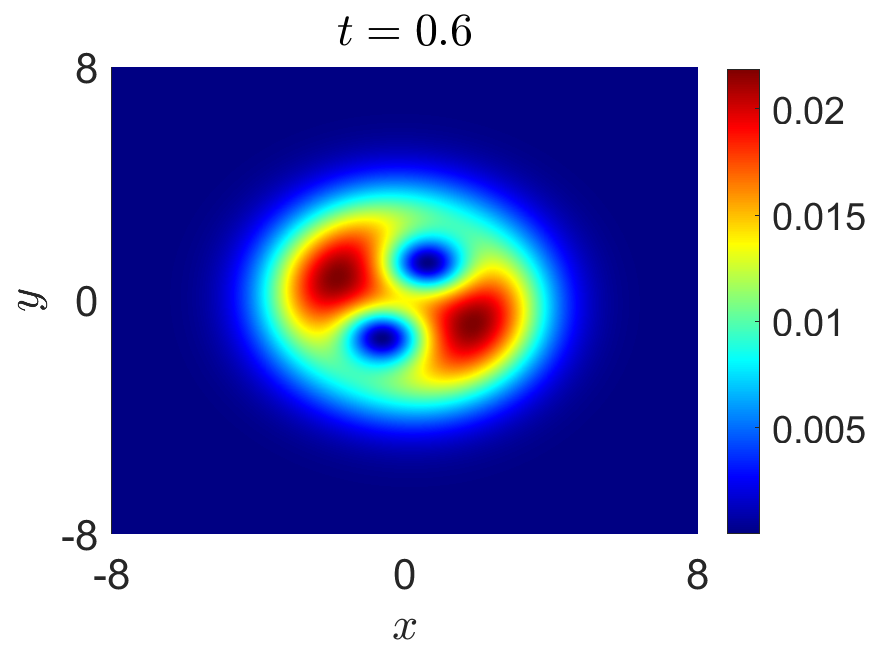}
    \includegraphics[width=0.32\textwidth]{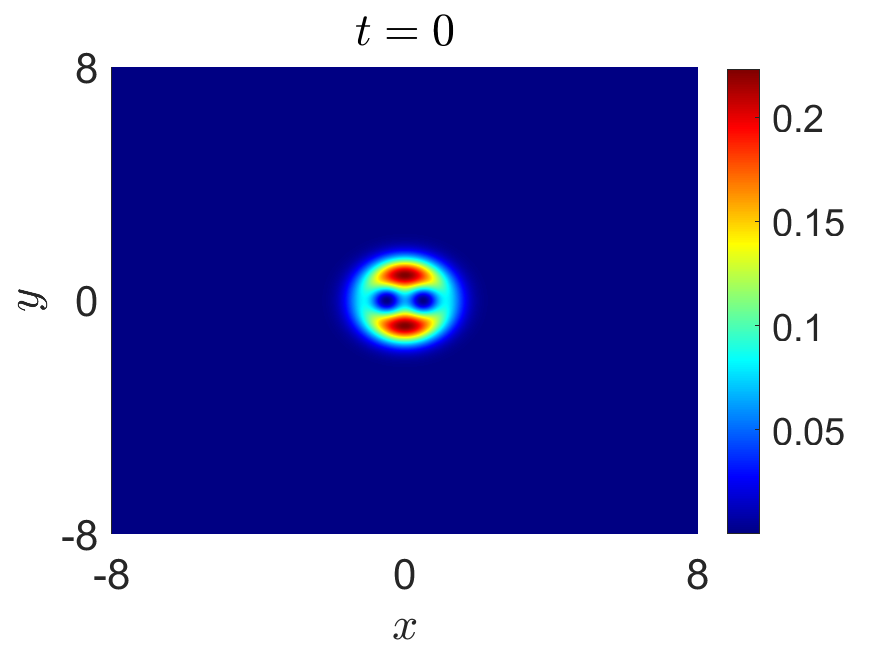}
    \includegraphics[width=0.32\textwidth]{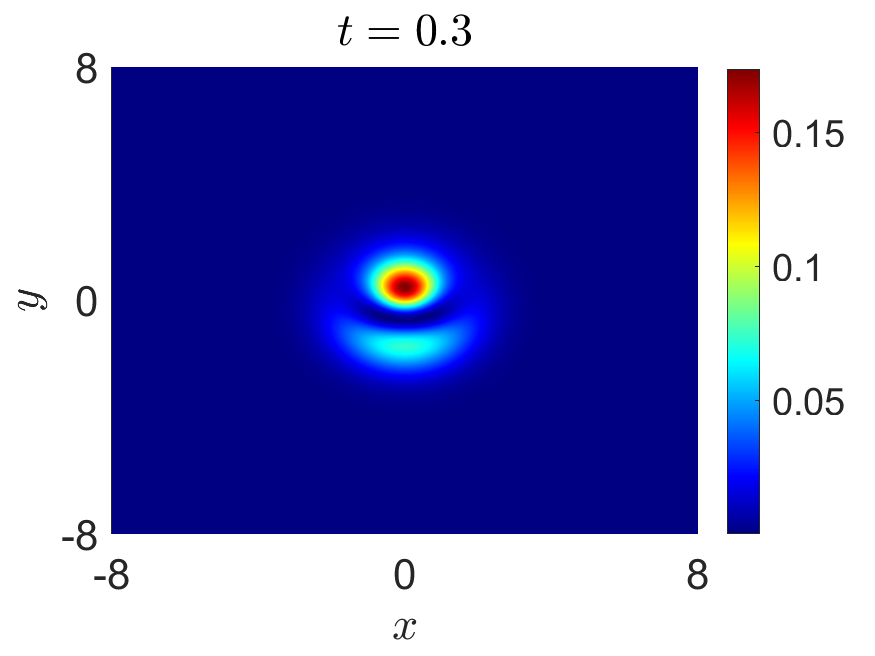}
    \includegraphics[width=0.32\textwidth]{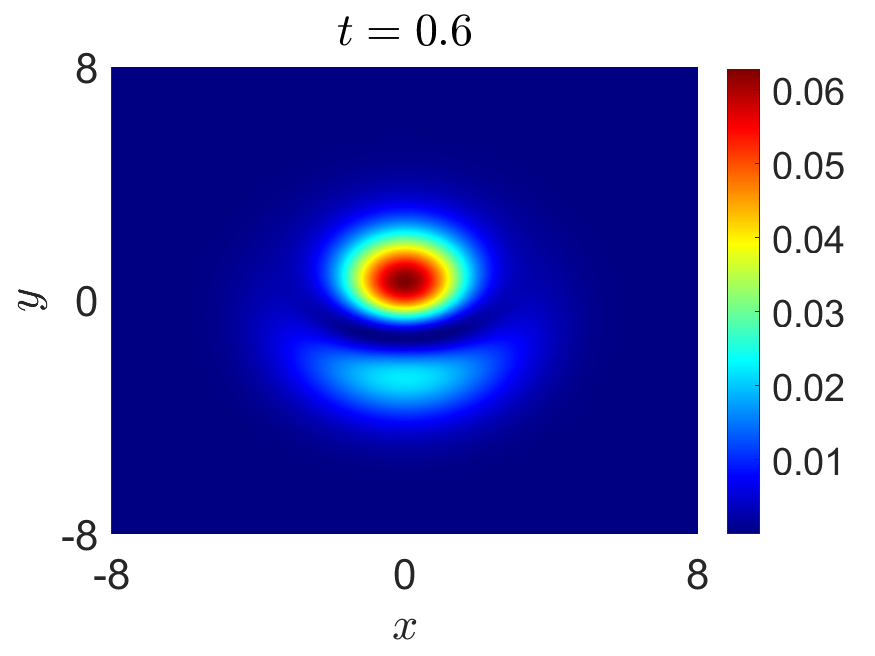}
    \caption{Plots of the density $\rho(x, y,t)$ at different times for Case I (top row); Case II (middle row); Case III (bottom row).}\label{u2dynamics}
\end{figure}
 From these figures, we can draw the following conclusions:
\begin{itemize}
 \item[(i)]  In Case I, we consider the focusing case of a single Gausson, i.e., $ \lambda< 0$. When
the time $t$ evolves, the single Gausson concentrates and the peak value becomes larger with the time evolution. (cf. top row of Figure \ref{u2dynamics})
\item[(ii)]  In Case II, we initially have a vortex pair located at $(\pm0.5,0)$.
 When the time $t$ evolves, the vortex pair rotates with each other and they never collide and annihilate
 (cf. middle row of Figure \ref{u2dynamics}).
  \item[(iii)] In Case III, we initially have a vortex dipole located at $(\pm0.5,0)$. When the time $t$ evolves, the two vortices start moving together, and then they collide and
 disappea within a short time (cf. bottom row of Figure \ref{u2dynamics}).
\end{itemize}
\textbf{Example 3.} Secondly, we simulate the interaction between the two Gaussons by using BDF2 scheme \eqref{lognlsbdf1} with $ \lambda=-1,\,\tau=0.001,\,h_x=h_y=\frac{1}{16},\,\Omega=[-16,16]^2$ for Cases IV and V while $\Omega=[-48,48]^2$ for Case VI. The initial solution \cite{Bao2022Error} is taken as follows
\begin{equation}\label{2dGi}
u_0(\boldsymbol{x})=b_1 e^{\ri \boldsymbol{x} \cdot \boldsymbol{v}_1+\frac{\lambda}{2}|\boldsymbol{x}-\boldsymbol{x}_1^0|^2}+b_2 e^{\ri \boldsymbol{x} \cdot \boldsymbol{v}_2+\frac{\lambda}{2}|\boldsymbol{x}-\boldsymbol{x}_2^0|^2},
\end{equation}
where $\boldsymbol{b}_j,\,\boldsymbol{v}_j,\,\boldsymbol{x}_j^0\,(j=1,2)$ are real constant vectors. Here, we consider the following cases:
\begin{itemize}
  \item [(IV)]$b_1=b_2=\frac{1}{\sqrt[4]{\pi}}, \boldsymbol{v}_1=\boldsymbol{v}_{\mathbf{2}}=(0,0)^T, \boldsymbol{x}_1^0=-\boldsymbol{x}_2^0=(-2,0)^T;$
  \item [(V)]$b_1=1.5 b_2=\frac{1}{\sqrt[4]{\pi}}, \boldsymbol{v}_1=(-0.15,0)^T, \boldsymbol{v}_2=\boldsymbol{x}_1^0=(0,0)^T, \boldsymbol{x}_2^0=(5,0)^T$;
  \item [(VI)]$b_1=b_2=\frac{1}{\sqrt[4]{\pi}}, \boldsymbol{v}_1=(0,0)^T, \boldsymbol{v}_{\mathbf{2}}=(0,0.85)^T, \boldsymbol{x}_1^0=-\boldsymbol{x}_2^0=(-2,0)^T$.
\end{itemize}
Figures \ref{u2IVdynamics}-\ref{u2VIdynamics} pictured the contour plots of the density $\rho(x, y,t)$ at different times for Cases IV, V, VI. From these pictures, we can observe that
if the two static Gaussons stay close enough, they will attract, collide and stick together shortly then separate again liking a pendulum (cf. Figure \ref{u2IVdynamics}). This phenomenon is similar to that in one dimension \cite{Bao2019Regularized}. In Figure \ref{u2Vdynamics}, we can observe that the moving Gausson will cause the static one to move in the same direction. However, if these two Gaussons staying close enough (cf. Figure \ref{u2VIdynamics}), if the moving Gausson moves perpendicular to the line connecting the two Gaussons, it will cause the static Gausson to be dragged into motion, and the direction of the moving Gausson will be altered as a result. Consequently, the two Gaussons will begin to rotate around each other, gradually drifting apart in the process.

\begin{figure}[htbp]
  \centering
    \includegraphics[width=0.32\textwidth]{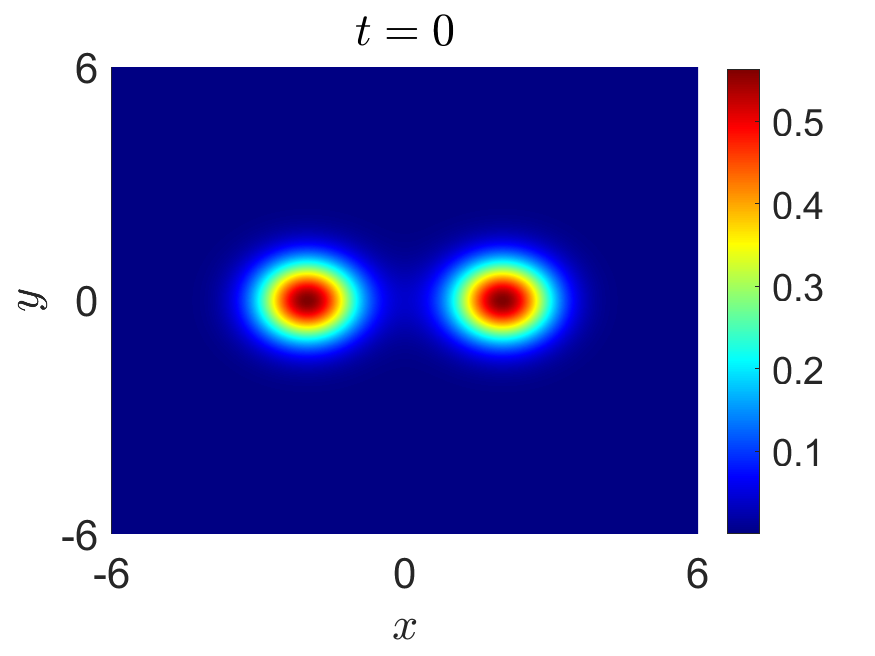}
    \includegraphics[width=0.32\textwidth]{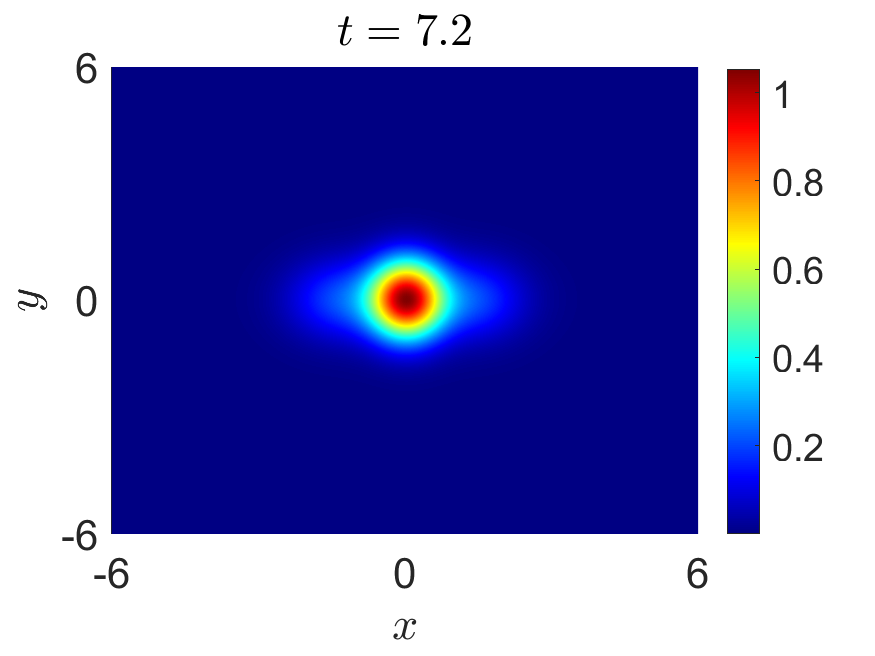}
    \includegraphics[width=0.32\textwidth]{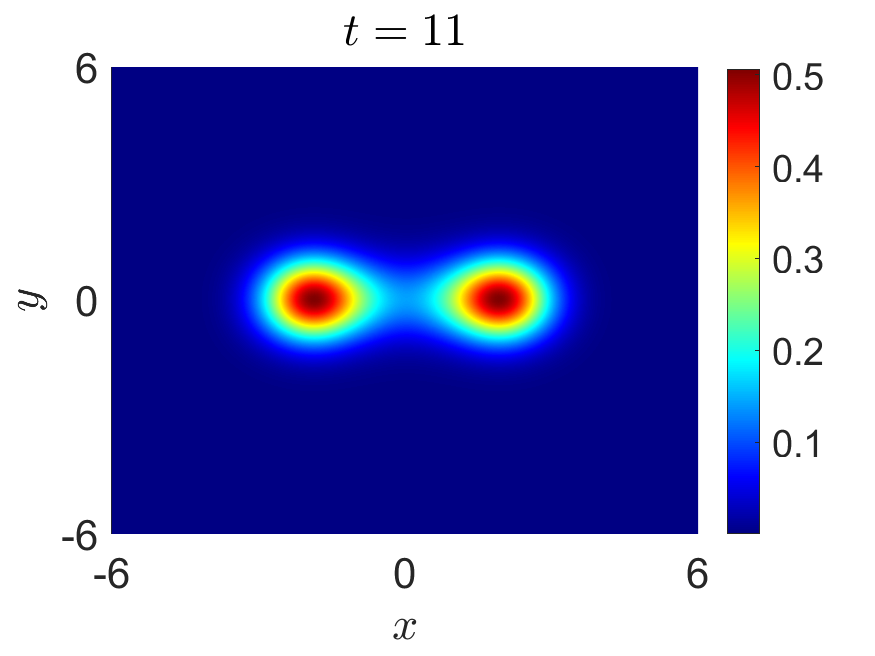}
    \caption{Plots of the density $\rho(x, y,t)$ at different times in the region $[-6,6]^2$ for Case IV.}\label{u2IVdynamics}
\end{figure}
\begin{figure}[htbp]
  \centering
    \includegraphics[width=0.32\textwidth]{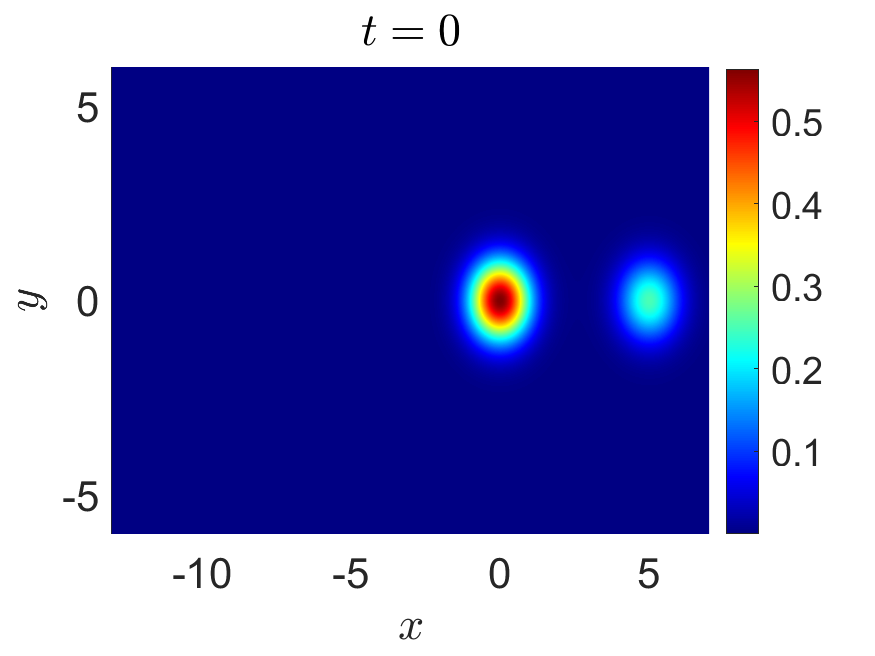}
    \includegraphics[width=0.32\textwidth]{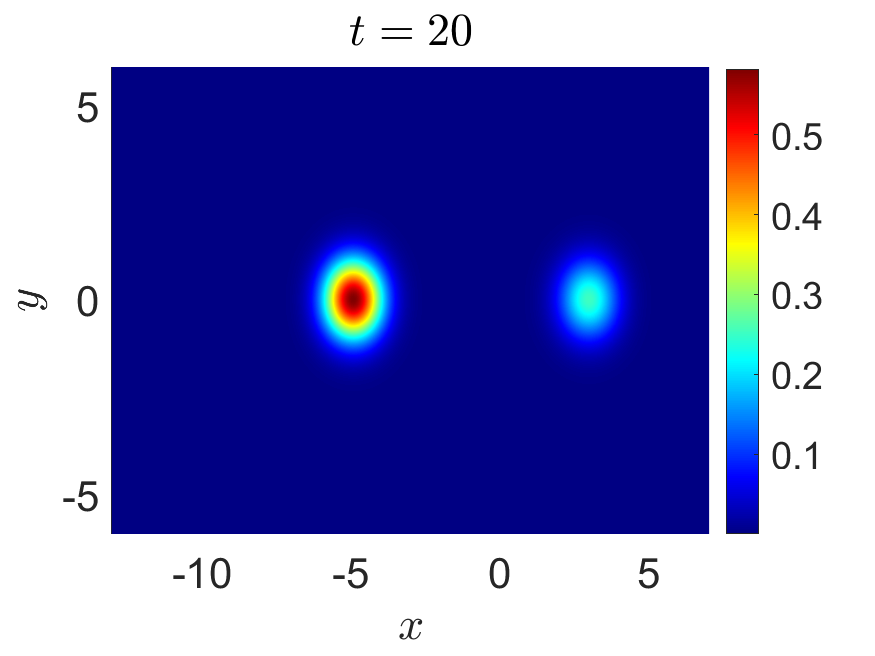}
    \includegraphics[width=0.32\textwidth]{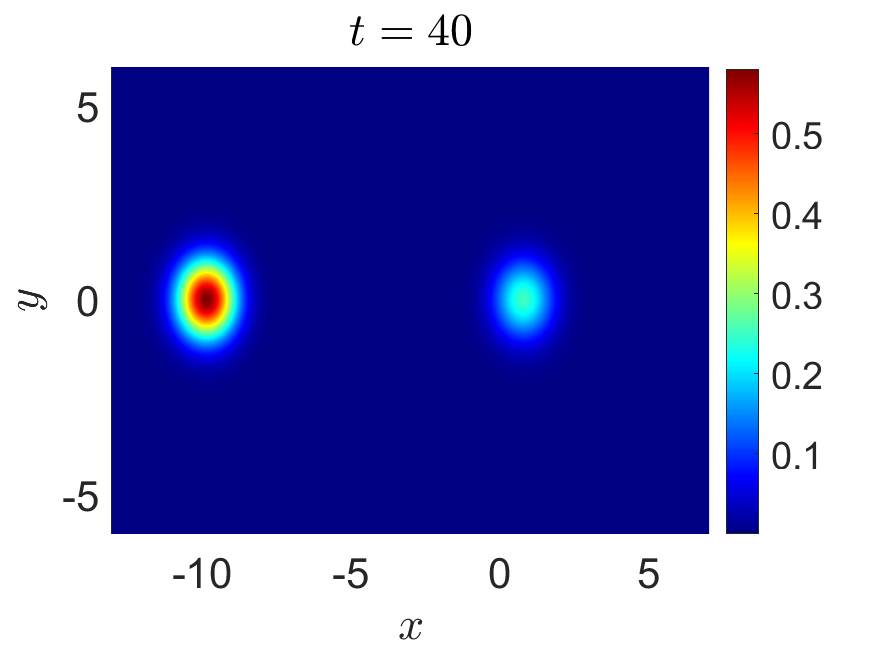}
    \caption{Plots of the density $\rho(x, y,t)$ at different times in the region $[-13,7]\times[-6,6]$ for Case V.}\label{u2Vdynamics}
\end{figure}
\begin{figure}[htbp]
  \centering
    \includegraphics[width=0.32\textwidth]{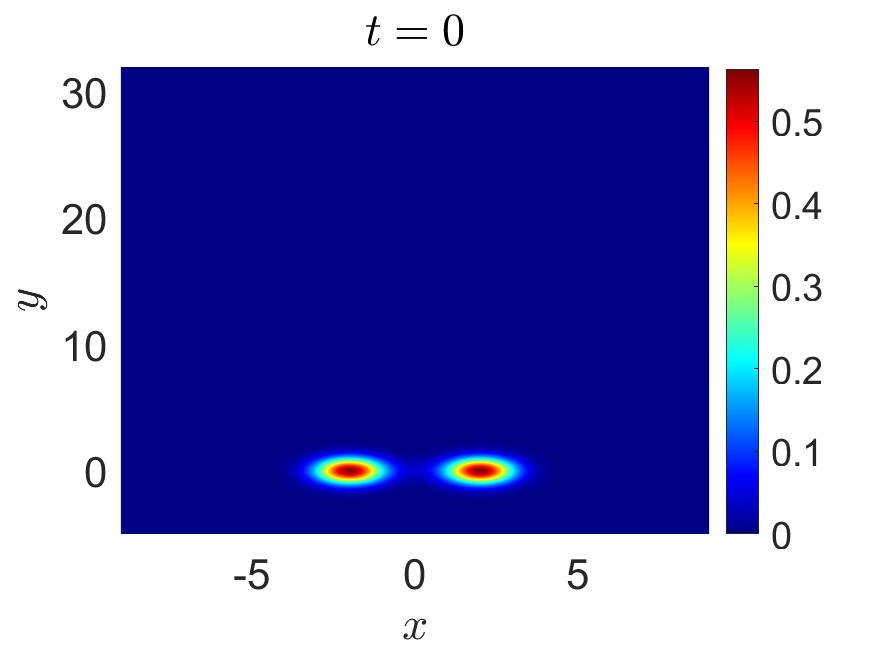}
    \includegraphics[width=0.32\textwidth]{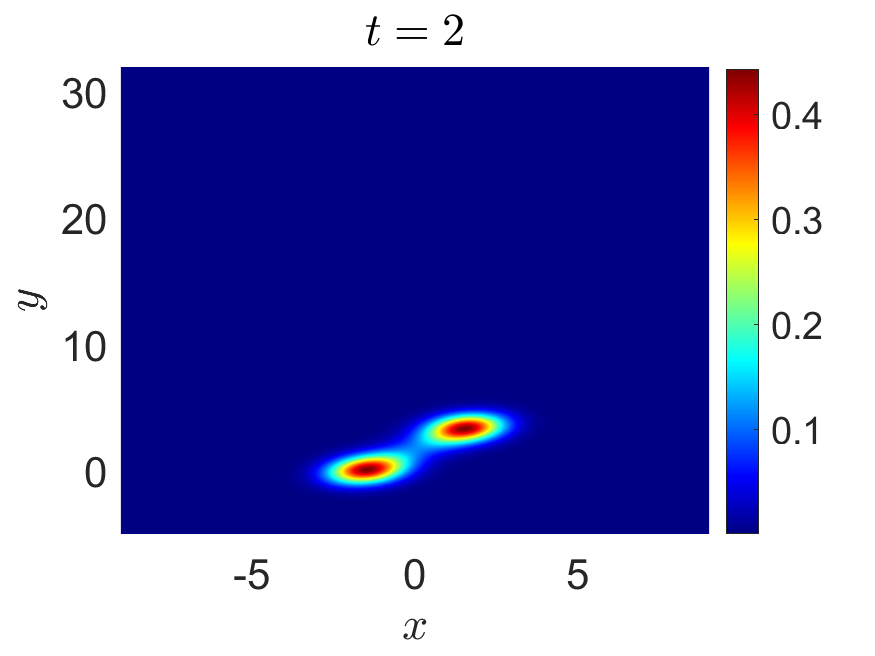}
    \includegraphics[width=0.32\textwidth]{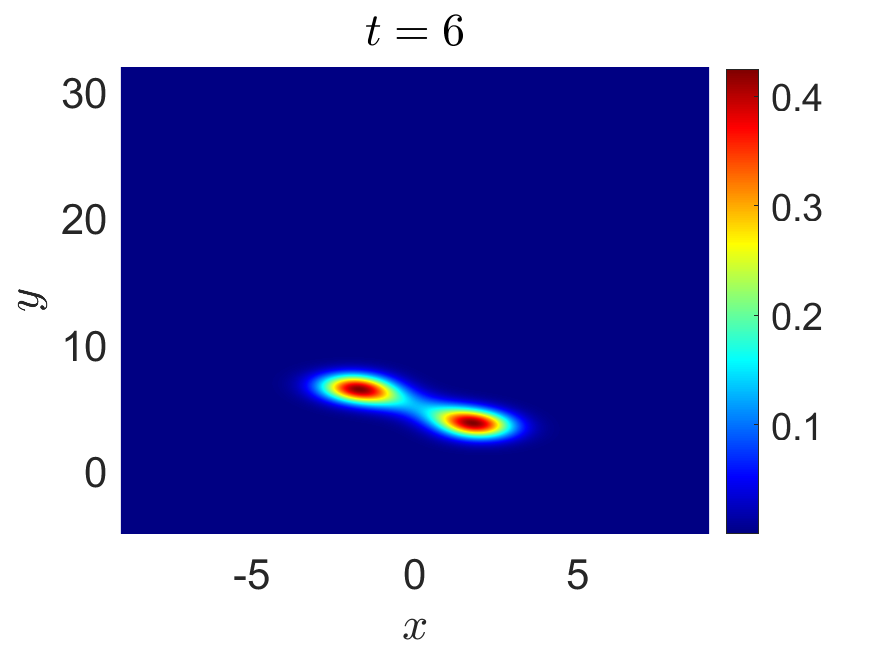}
    \includegraphics[width=0.32\textwidth]{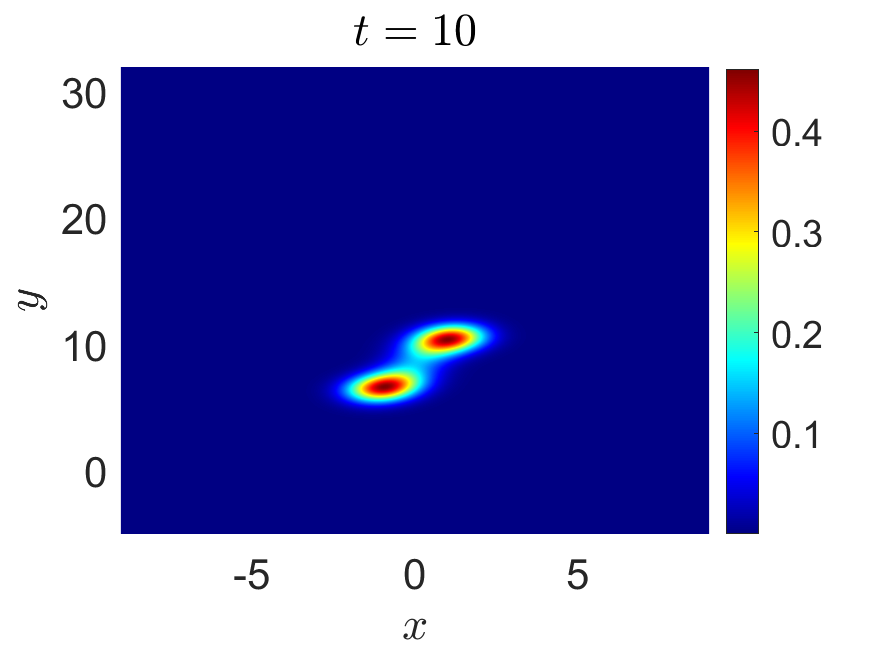}
    \includegraphics[width=0.32\textwidth]{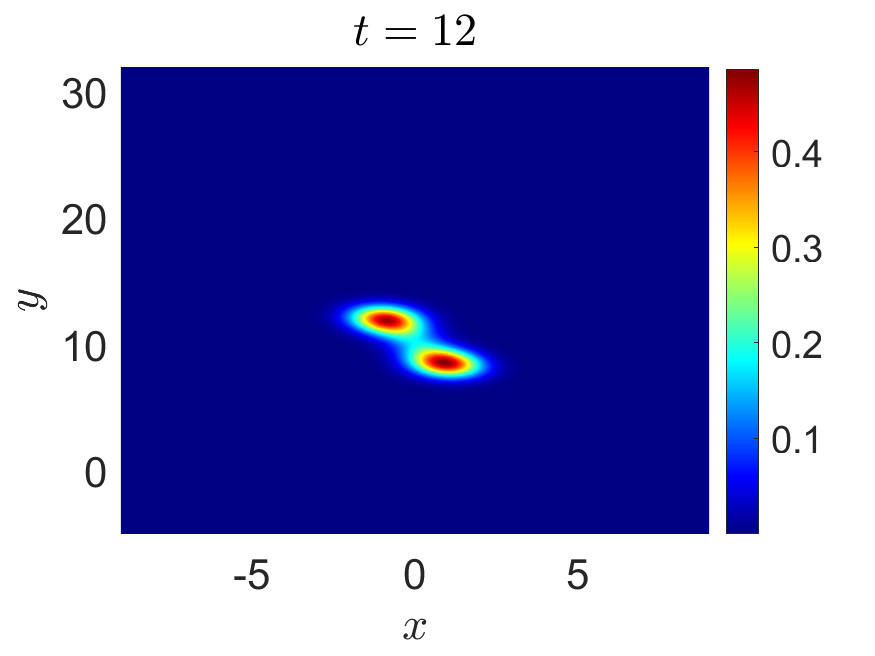}
    \includegraphics[width=0.32\textwidth]{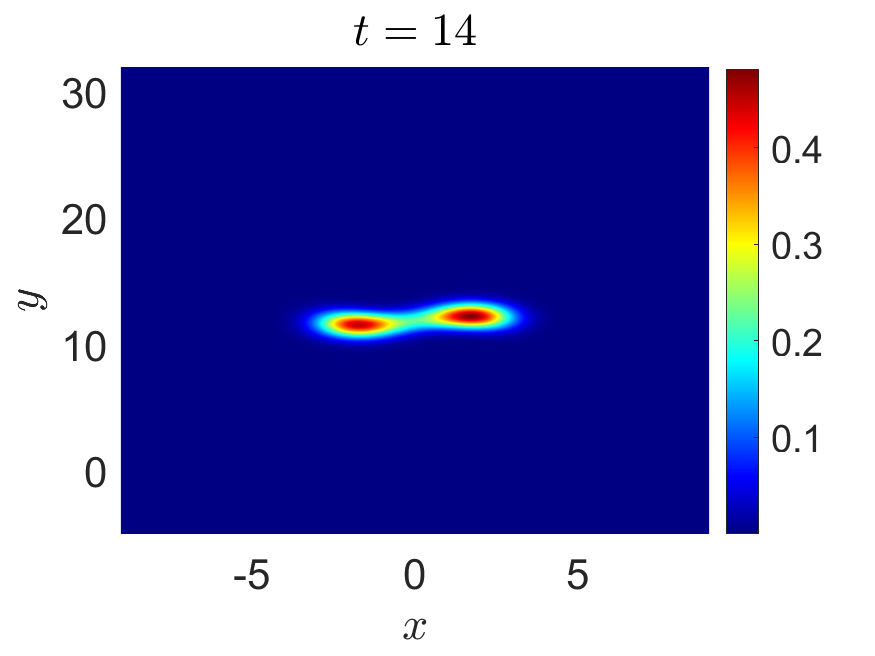}
    \caption{Plots of the density $\rho(x, y,t)$ at different times in the region $[-9,9]\times[-5,32]$ for Case VI.}\label{u2VIdynamics}
\end{figure}

\section{Concluding remarks}
In this paper, we propose the approximation of the solution to the LogSE by BDF1 and BDF2 schemes. We overcome the logarithmic nonlinear term bringing significant difficulties in the convergence analysis of the FDTD methods.
 The convergence of two schemes are established by proving optimal/ almost optimal  and unconditional order in the discrete $L^2$ norm with some introduced inequalities. Moreover, we use the lifting technique to avoid the constrain of mesh conditions. Establishing the optimal (not almost optimal) error estimate of the two schemes in the discrete $H^1$ norm will be the work we need to address in the near future.

\bigskip

\begin{appendix}

\setcounter{equation}{0}
	\renewcommand{\theequation}{A.\arabic{equation}}
	\section{Proof of Lemma \ref{TnorderIEFD}}\label{AppendixA}
Performing Taylor's expansion on the unknown function $u$ at the point $(x_{j},y_{k},t_{n})$ and applying some simple substitutions to the integral remainder, we have
\begin{equation*}
\begin{split}
\xi^{n}_{j,k}&= {\rm{i}}  \tau\alpha^n_{j,k} +  \tau  \beta^n_{j,k}+ \frac{h^2 }{6} \gamma^n_{j,k},
\end{split}
\end{equation*}
where
\begin{equation*}
\begin{split}
  \alpha^n_{j,k}&=\int^{1}_{0}(1-s)u_{tt}(x_j,y_k,t_n+s\tau)\,\mathrm{d} s,\\
  \beta^n_{j,k}&= \int^{1}_{0} \partial^2_{x}\partial_{t}  u(x_j,y_k,t_n+s\tau)\, \mathrm{d}s+\int^{1}_{0} \partial^2_{y}\partial_{t}  u(x_j,y_k,t_n+s\tau)\, \mathrm{d}s, \\
\gamma^n_{j,k}&= \int^{1}_{-1}(1-|s|)^3 \partial^4_{x} u(x_j+sh_x,y_k,t_{n+1})\, \mathrm{d}s+\int^{1}_{-1}(1-|s|)^3 \partial^4_{y} u(x_j,y_k+sh_y,t_{n+1})\, \mathrm{d}s
\end{split}
\end{equation*}
under the assumption of \eqref{ucnA}, then using the triangle inequality, we conclude that
\begin{equation*}
		\begin{split}
			|\xi^{n}_{j,k}|&\lesssim \tau \|\partial^{2}_{t} u\|_{L^{\infty}}+\tau \|\partial^{2}_{x}\partial_{t}  u\|_{L^{\infty}}+\tau \|\partial^{2}_{y}\partial_{t}  u\|_{L^{\infty}}+h_x^2 \|\partial^{4}_{x} u\|_{L^{\infty}}+h_y^2 \|\partial^{4}_{y}u\|_{L^{\infty}}\\
&\le \tau (\|\partial^{2}_{t} u\|_{L^{\infty}}+\|\partial^{2}_{x}\partial_{t}  u\|_{L^{\infty}}+\|\partial^{2}_{y}\partial_{t}  u\|_{L^{\infty}})+h^2 (\|\partial^{4}_{x} u\|_{L^{\infty}}+\|\partial^{4}_{y}u\|_{L^{\infty}})
\le C_u(\tau+h^2).
		\end{split}
	\end{equation*}
	This implies \eqref{tnest}.

\setcounter{equation}{0}
	\renewcommand{\theequation}{B.\arabic{equation}}
\section{Proof of Lemma \ref{Tnorder-bdf2}}\label{AppendixC}

Performing Taylor's expansion on each term of \eqref{lte-bdf2-1} $u$ at the point $(x_{j},y_{k},t_{n+1})$ and applying some simple substitutions to the integral remainder, we have
  \begin{align}
&\widetilde{\xi}^{n}_{j,k}:=\ri {D}_{t}^{-}{U}^{n+1}_{j,k}+\delta^2_{\nabla} {U}^{n+1}_{j,k}-2\lambda f(2U^{n}_{j,k}-U^{n-1}_{j,k}),
 \quad (j,k)\in\mathcal{T}_{h},\;\; {1}\leq{n}<{N},\label{pf-lte-bdf2-0}
\end{align}
where
\begin{align}
{D}_{t}^{-}{U}^{n+1}_{j,k}&=\partial_{t}u(x_{j},y_{k},t_{n+1})
-\tau^{2}\int_{0}^{1}[\partial_{t}^{3}u(x_{j},y_{k},t_{n+1}-s\tau)\nonumber\\
&-\partial_{t}^{3}u(x_{j},y_{k},t_{n+1}-2s\tau)](1-s)^{2}\;\rd{s},\label{pf-lte-bdf2-1}\\
\delta^2_{\nabla} {U}^{n+1}_{j,k}&=\Delta{u}(x_{j},y_{k},t_{n+1})
+\frac{h_{x}^2 }{6} \int^{1}_{-1}(1-|s|)^3 \partial^4_{x} u(x_j+sh_x,y_k,t_{n+1})\, \mathrm{d}s\nonumber\\
&+\frac{h_{y}^2 }{6} \int^{1}_{-1}(1-|s|)^3 \partial^4_{y} u(x_j,y_k+sh_y,t_{n+1})\;\rd{s},\label{pf-lte-bdf2-2}
\end{align}
and
\begin{align}
&|f(2U^{n}_{j,k}-U^{n-1}_{j,k})-f(U^{n+1}_{j,k})|\nonumber\\
\leq \;&|f(2U^{n}_{j,k}-U^{n-1}_{j,k})-f_{\varepsilon}(2U^{n}_{j,k}-U^{n-1}_{j,k})|\nonumber\\
   &+|f_{\varepsilon}(2U^{n}_{j,k}-U^{n-1}_{j,k})-f_{\varepsilon}(U^{n+1}_{j,k})|+|f_{\varepsilon}(U^{n+1}_{j,k})-f(U^{n+1}_{j,k})|\nonumber\\
=  \; &4{\varepsilon}+ 2|\ln(\varepsilon)| |U^{n+1}_{j,k}-2U^{n}_{j,k}+U^{n-1}_{j,k}|\nonumber\\
\leq \; &4{\varepsilon}+ 2|\ln(\varepsilon)| \tau^{2}\int_{0}^{1}|\partial_{t}^{2}u(x_{j},y_{k},t_{n}+s\tau)+\partial_{t}^{2}u(x_{j},y_{k},t_{n}-s\tau)|(1-s)\;\rd{s}\nonumber\\
\leq \; &C(\tau^{2}+ \tau^{2}|\ln(\tau)|),\label{pf-lte-bdf2-3}
\end{align}
under the assumption of \eqref{ucnB}, then using the triangle inequality and taking $\varepsilon=\tau^{2}$, we conclude that
\begin{equation*}
	\begin{split}
|\widetilde{\xi}^{n}_{j,k}|&\leq  C_{u}(\tau^{2} (1+\|\partial^{3}_{t}u\|_{L^{\infty}})+\tau^{2}|\ln(\tau)| \|\partial^{2}_{t}u\|_{L^{\infty}}+h_x^2 \|\partial^{4}_{x} u\|_{L^{\infty}}+h_y^2 \|\partial^{4}_{y}u\|_{L^{\infty}})\\
&\le C_{u}\tau^{2} (1+\|\partial^{3}_{t}u\|_{L^{\infty}}+|\ln(\tau)| \|\partial^{2}_{t}u\|_{L^{\infty}})+C_{u}{h}^{2} (\|\partial^{4}_{x} u\|_{L^{\infty}}+\|\partial^{4}_{y}u\|_{L^{\infty}})\nonumber\\
&\le C_u(\tau^{2}|\ln(\tau)|+h^2),\;\; {1}\leq{n}<{N}.
		\end{split}
	\end{equation*}
 On the other side, we know from Lemma  \ref{TnorderIEFD} 
\begin{equation*}
		\begin{split}
			|\widetilde{\xi}^{0}_{j,k}|=|{\xi}^{0}_{j,k}|&\leq
 C_{u}(\tau + h^{2}),\;\; {1}\leq{n}<{N}.
		\end{split}
	\end{equation*}
This completes the proof of Lemma \ref{Tnorder-bdf2}.

\end{appendix}

\bibliographystyle{siam}


\begin{thebibliography}{10}
\bibitem{Bao2013Optimal}
 W.~Bao and Y.~Cai.
\newblock Optimal error estimates of finite difference methods for the
  {G}ross-{P}itaevskii equation with angular momentum rotation.
\newblock {\em Math. Comp.}, 82(281):99--128, 2013.

\bibitem{Bao2019Error}
 W.~Bao, R.~Carles, C.~Su, and Q.~Tang.
\newblock Error estimates of a regularized finite difference method for the
  logarithmic {S}chr\"{o}dinger equation.
\newblock {\em SIAM J. Numer. Anal.}, 57(2):657--680, 2019.

\bibitem{Bao2019Regularized}
 W.~Bao, R.~Carles, C.~Su, and Q.~Tang.
\newblock Regularized numerical methods for the logarithmic {S}chr\"{o}dinger
  equation.
\newblock {\em Numer. Math.}, 143(2):461--487, 2019.

\bibitem{Bao2022Error}
 W.~Bao, R.~Carles, C.~Su, and Q.~Tang.
\newblock Error estimates of local energy regularization for the logarithmic
  {S}chr\"{o}dinger equation.
\newblock {\em Math. Models Methods Appl. Sci.}, 32(1):101--136, 2022.

\bibitem{bao2022regularized}
 W.~Bao, Y.~Feng, and Y.~Ma.
\newblock Regularized numerical methods for the nonlinear {Schr{\"o}dinger}
  equation with singular nonlinearity.
\newblock {\em East Asian J. Appl. Math.}, 13:646--670, 2023.

\bibitem{bao2024optimal}
 W.~ Bao, Y.~Ma, and  C~Wang.
\newblock Optimal error bounds on an exponential wave integrator {Fourier}
  spectral method for the logarithmic {S}chr\"{o}dinger equation.
\newblock {\em arXiv preprint arXiv:2412.16902}, 2024.
\bibitem{Bialynicki1976nonlinear}
I.~Bia{\l}ynicki-Birula and J.~Mycielski.
\newblock Nonlinear wave mechanics.
\newblock {\em Ann. Physics}, 100(1-2):62--93, 1976.

\bibitem{Bialynicki1979Gaussons}
I.~Bia{\l}ynicki-Birula and J.~Mycielski.
\newblock Gaussons: solitons of the logarithmic {S}chr\"{o}dinger equation.
\newblock {\em Phys. Scripta}, 20(3-4):539--544, 1979.
\newblock Special issue on solitons in physics.


\bibitem{Buljan2003Incoherent}
H.~Buljan, A.~\v{S}iber, M.~Solja\v{c}i\'{c}, T.~Schwartz, M.~Segev, and {D.N.}
  Christodoulides.
\newblock Incoherent white light solitons in logarithmically saturable
  noninstantaneous nonlinear media.
\newblock {\em Phys. Rev. E (3)}, 68(3):036607, 2003.

\bibitem{Carles2018Universal}
R.~Carles and I.~Gallagher.
\newblock Universal dynamics for the defocusing logarithmic {S}chr\"{o}dinger
  equation.
\newblock {\em Duke Math. J.}, 167(9):1761--1801, 2018.

\bibitem{Carles2022numerical}
R.~Carles and C.~ Su.
\newblock Numerical study of the logarithmic {S}chr\"{o}dinger equation with
  repulsive harmonic potential.
\newblock {\em Discrete Continuous Dyn. Syst. Ser. B.}, 28(5):3136--3159, 2023.

\bibitem{Cazenave1980}
T.~Cazenave and A.~Haraux.
\newblock \'evolution equations with logarithmic nonlinearity.
\newblock {\em Annales of the Faculty of Sciences of Toulouse: Math\'ematique},
  5th s\'ery, 2(1):21--51, 1980.

\bibitem{Hansson2009propagation}
T.~Hansson, D.~Anderson, and M.~Lisak.
\newblock Propagation of partially coherent solitons in saturable logarithmic
  media: A comparative analysis.
\newblock {\em Phys. Rev. A.}, 80(3):033819, 2009.

\bibitem{Hefter1985Application}
 E. F.~Hefter.
\newblock Application of the nonlinear {Schr\"{o}dinger} equation with a
  logarithmic inhomogeneous term to nuclear physics.
\newblock {\em Phys. Rev. A.}, 32(2):1201, 1985.

\bibitem{krolikowski2000unified}
W.~Kr{\'o}likowski, D.~Edmundson, and O.~Bang.
\newblock Unified model for partially coherent solitons in logarithmically
  nonlinear media.
\newblock {\em Phys. Rev. E}, 61(3):3122, 2000.

\bibitem{Paraschis2023On}
 P.~Paraschis and G. E.~Zouraris.
\newblock On the convergence of the {Crank-Nicolson} method for the logarithmic
  {S}chr\"odinger equation.
\newblock {\em Discrete Continuous Dyn. Syst. Ser. B.}, 28(1):245--261, 2023.



\bibitem{Qian2023Novle}
X.~Qian, H.~Zhang, J.~Yan, and S.~Song.
\newblock Novel high-order mass- and energy-conservative {R}unge-{K}utta
  integrators for the regularized logarithmic {S}chr\"{o}dinger equation.
\newblock {\em Numer. Math. Theory Methods Appl.}, 16(4):993--1012, 2023.
%

\bibitem{Wang2024Error}
L.-L. Wang, J.~Yan, and X.~Zhang.
\newblock Error analysis of a first-order {IMEX} scheme for the logarithmic
  {S}chr\"{o}dinger equation.
\newblock {\em SIAM J. Numer. Anal.}, 62(1):119--137, 2024.

\bibitem{Yan2021Two}
 J.~Yan, X.~Qian, H.~Zhang, and  S.~ Song.
\newblock Two regularized energy-preserving finite difference methods for the
  logarithmic {K}lein-{G}ordon equation.
\newblock {\em J. Comput. Appl. Math.}, 393:113478, 2021.

\bibitem{Yan2022A}
 J.~Yan, H.~Zhang, X.~Qian, X.~Chen, and S.~Song.
\newblock A novel regularized model for the logarithmic {K}lein-{G}ordon
  equation.
\newblock {\em Appl. Numer. Math.}, 176:19--37, 2022.

\bibitem{Yan2021R}
 J.~Yan, H.~Zhang, X.~Qian, and S.~Song.
\newblock Regularised finite difference methods for the logarithmic
  {K}lein-{G}ordon equation.
\newblock {\em East Asian J. Appl. Math.}, 11(1):119--142, 2021.

\bibitem{zhang2024low}
 X.~Zhang and L.-L. Wang.
\newblock Low regularity estimates of the {L}ie-totter time-splitting {F}ourier
  spectral method for the logarithmic {S}chr\"{o}dinger equation.
\newblock {\em arXiv preprint arXiv:2401.02288}, 2024.

\bibitem{zhou1990application}
 Y.~Zhou.
\newblock Application of discrete functional analysis to the finite difference
  method.
\newblock {\em International Academic Publishers, Beijing}, 1990.

\bibitem{Zloshchastiev2010Logarithmic}
 K. G.~Zloshchastiev.
\newblock Logarithmic nonlinearity in theories of quantum gravity: origin of
  time and observational consequences.
\newblock {\em Gravit. Cosmol.}, 16(4):288--297, 2010.
\end{thebibliography}

\end{document}